\documentclass{amsart}

\usepackage{amssymb,latexsym}
\usepackage{graphicx}
\usepackage[all,2cell]{xy}
\SelectTips{eu}{12}

\newcommand{\Z}{\mathbb{Z}}

\newcommand{\Q}{\mathbb{Q}}
\newcommand{\R}{\mathbb{R}}
\newcommand{\C}{\mathbb{C}}

\newtheorem{lemma}{Lemma}[section]

\newtheorem{proposition}[lemma]{Proposition}
\newtheorem{theorem}[lemma]{Theorem}
\newtheorem{corollary}[lemma]{Corollary}
\newtheorem{conjecture}[lemma]{Conjecture}

\theoremstyle{definition}
\newtheorem{remark}[lemma]{Remark}
\newtheorem{definition}[lemma]{Definition}

\begin{document}
\parindent0em
\setlength\parskip{.1cm}
\title{The isomorphism problem for planar polygon spaces}
\author{Dirk Sch\"utz}
\address{Department of Mathematical Sciences\\ University of Durham\\ Science Laboritories\\ South Rd\\ Durham DH1 3LE\\ United Kingdom}
\email{dirk.schuetz@durham.ac.uk}
\keywords{Configuration spaces, planar linkages, Walker conjecture}
\subjclass[2000]{Primary 58D29; Secondary 57R19, 57R70, 16W50}
\begin{abstract}
 We give a proof of a Conjecture of Walker which states that one can recover the lengths of the bars of a circular linkage from the cohomology ring of the configuration space. For a large class of length vectors, this has been shown by Farber, Hausmann and Sch\"utz. In the remaining cases, we use Morse theory and the fundamental group to describe a subring of the cohomology invariant under graded ring isomorphism. From this subring the conjecture can be derived by applying a result of Gubeladze on the isomorphism problem of monoidal rings.
\end{abstract}

\maketitle
\section{Introduction}

Let $\R^n_{>0}$ be the set of all $n$-tupels $\ell=(\ell_1,\ldots,\ell_n)$ with each $\ell_i$ a positive real number. For $\ell\in \R^n_{>0}$ we define the \em planar polygon space \em $\mathcal{M}_\ell$ as
\begin{eqnarray*}
 \mathcal{M}_\ell&=&\left.\left\{ z\in T^n\,\left|\,\sum_{i=1}^n\ell_iz_i=0\right.\right\}\right/ SO(2),
\end{eqnarray*}
where $T^n=(S^1)^n$ is the standard torus and $SO(2)$ acts on $T^n$ by rotating each coordinate. The elements of $\mathcal{M}_\ell$ can be considered as closed configurations up to rotation and translation of a robot arm with $n$ bars, where $\ell$ determines the length of each bar. For this reason we also call $\ell$ a \em length vector\em.

Given a subset $J\subset \{1,\ldots,n\}$ we can look at the hyperplane $H_J$ defined by
\begin{eqnarray*}
 H_J&=&\left\{x\in \R^n\,\left|\,\sum\nolimits_{j\in J}x_j=\sum\nolimits_{j\notin J} x_j\right.\right\}.
\end{eqnarray*}
Then 
\begin{eqnarray*}
\R^n_{>0}-\bigcup_{J\subset \{1,\ldots,n\}}H_J
\end{eqnarray*}
consists of finitely many open components which we call \em chambers\em. It is possible to decompose $\R^n_{>0}$ further into strata, compare \cite{fahasc}, but here we will mainly be interested in chambers. It is known that if $\ell$ and $\ell'$ are in the same chamber, then $\mathcal{M}_\ell$ and $\mathcal{M}_{\ell'}$ are $O(1)$-diffeomorphic manifolds, where $O(1)\cong \Z/2$ acts on $\mathcal{M}_\ell$ by complex conjugation, see \cite{hausma}. Length vectors contained in a chamber are called \em generic\em.

Also, if $\sigma\in \Sigma_n$ is a permutation and $\ell'=(\ell_{\sigma(1)},\ldots,\ell_{\sigma(n)})$, we get a diffeomorphism $\mathcal{M}_\ell\cong \mathcal{M}_{\ell'}$ by permuting coordinates. We can therefore assume that $\ell$ is \em ordered\em, that is, we have $\ell=(\ell_1,\ldots,\ell_n)$ with $\ell_1\leq \ldots \leq \ell_n$.

The isomorphism problem for planar polygon spaces now asks whether diffeomorphic (or homeomorphic, or homotopy equivalent) polygon spaces are in the same chamber up to permutation.

The topology of planar polygon spaces has been considered by many authors, let us list Hausmann \cite{hausmf}, Kapovich and Millson \cite{kapmil}, Kamiyama, Tezuka and Toma \cite{kateto}, and Milgram and Trinkle \cite{miltri} to name a few.

In an unpublished undergraduate thesis \cite{walker}, Kevin Walker gave a formula for the homology groups of $\mathcal{M}_\ell$ for generic $\ell$. This formula was later recovered in \cite{farsch}; a reader-friendly approach can be found in \cite{faritr}. For $n\leq 5$ the Betti numbers of $\mathcal{M}_\ell$ determine the chamber up to permutation, which is easily seen by checking all possible chambers, but for $n\geq 6$ the Betti numbers alone do not determine the chamber. However, Walker asked whether the ring structure of $H^\ast(\mathcal{M}_\ell)$ determines the chamber up to permutation.

\begin{conjecture}[Walker \cite{walker}]\label{walkerconj}
 Let $\ell$, $\ell'$ be generic length vectors. If $H^\ast(\mathcal{M}_\ell)\cong H^\ast(\mathcal{M}_{\ell'})$ are isomorphic as graded rings, then $\ell$ and $\ell'$ are in the same chamber up to permutation.
\end{conjecture}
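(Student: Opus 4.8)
\emph{Reduction.} I would first recall the combinatorial classification of chambers of ordered length vectors via their \emph{short subsets} $J\subseteq\{1,\ldots,n\}$, those with $\sum_{j\in J}\ell_j<\sum_{j\notin J}\ell_j$; note that these form a simplicial complex, since shrinking a short subset keeps it short. The partial result of Farber, Hausmann and Sch\"utz settles Conjecture~\ref{walkerconj} for all generic $\ell$ outside a restricted family $\mathcal{F}$, characterised by the short subsets of $\ell$ being as sparse as possible (informally, $\ell$ has one very long bar, so that $\mathcal{M}_\ell$ has a simple low-complexity model). It therefore suffices to prove two things: that an $\ell\in\mathcal{F}$ can never have cohomology ring isomorphic to that of an $\ell'\notin\mathcal{F}$, and that the conjecture holds internally to $\mathcal{F}$; the first will be a by-product of the ring invariants constructed below, the second is the real content. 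We may work with $\Z/2$-coefficients throughout, since $H^\ast(\mathcal{M}_\ell;\Z)$ is torsion-free for generic $\ell$, so a graded ring isomorphism over $\Z$ induces one over $\Z/2$.

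\emph{A canonical subring.} For $\ell\in\mathcal{F}$ the polygon space admits a transparent handle decomposition coming from the standard bending Morse function $z\mapsto|\ell_1z_1+\cdots+\ell_pz_p|$, whose critical submanifolds are lower-dimensional polygon spaces. Using this, together with an explicit computation of $\pi_1(\mathcal{M}_\ell)$, I would give a closed-form description of the Hausmann--Knutson ring $H^\ast(\mathcal{M}_\ell;\Z/2)$ for $\ell\in\mathcal{F}$ and, more importantly, single out a subring $A_\ell\subseteq H^\ast(\mathcal{M}_\ell;\Z/2)$ by an intrinsic feature of the graded ring --- for instance, the subring generated by the degree-one classes of square zero, or the image in cohomology of a canonically determined quotient or covering space. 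The point of invoking $\pi_1$ is precisely to pin down which sub-object is ``the right one'': within $\mathcal{F}$ the Betti numbers do not separate chambers, so no additive invariant can work, but the fundamental group distinguishes the relevant covers and hence the relevant subring, in a way that survives an arbitrary graded ring isomorphism $H^\ast(\mathcal{M}_\ell)\cong H^\ast(\mathcal{M}_{\ell'})$.

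\emph{Monoid rings and Gubeladze.} I would then recognise $A_\ell$ as the monoid algebra $k[M_\ell]$ of a positive affine monoid $M_\ell$ built directly out of the short subsets of $\ell$, hence out of the chamber. By the previous step a graded ring isomorphism $H^\ast(\mathcal{M}_\ell)\cong H^\ast(\mathcal{M}_{\ell'})$ restricts to an algebra isomorphism $k[M_\ell]\cong k[M_{\ell'}]$, and Gubeladze's solution of the isomorphism problem for monoid rings then forces $M_\ell\cong M_{\ell'}$ as monoids. It remains to check that $M_\ell$ recovers the chamber of $\ell$ up to permutation of the coordinates, which finishes the proof.

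\emph{Main obstacle.} The crux is the middle step. A graded ring isomorphism need not carry the natural generators $R,V_1,\ldots,V_{n-1}$ of the Hausmann--Knutson presentation to natural generators, so $A_\ell$ has to be described purely in terms of the ring together with its grading and then shown to be preserved. Securing such an intrinsic characterisation --- making the Morse-theoretic model of $\mathcal{M}_\ell$ for $\ell\in\mathcal{F}$ explicit enough, and using $\pi_1$ to break the symmetry that the Betti numbers leave behind --- is where the real work lies; once $A_\ell$ is in hand, the appeal to Gubeladze and the final combinatorial identification are essentially formal.
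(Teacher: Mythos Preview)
Your high-level strategy---reduce to the exceptional family, isolate a canonical subring, apply Gubeladze---matches the paper. But you misidentify where the difficulty lies, and this is a genuine gap.

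Making $A_\ell$ intrinsic is not the obstacle: the subring $H^\ast_{(1)}(\mathcal{M}_\ell)$ generated by degree~0 and~1 is automatically preserved by any graded ring isomorphism, and this is exactly what the paper uses. The real problem is that for the special length vectors (your family~$\mathcal{F}$) this subring is \emph{not} a monoid algebra or an exterior face ring. The ideal of relations contains non-monomial elements such as
\[
A_{J-\{i,\ldots,n-3\}}\bigl(A_{\{i,\ldots,n-3\}}+(-1)^{n-2-i}B_{\{i,\ldots,n-3\}}\bigr),
\]
arising because the balanced-subalgebra relation $X_J=0$ expands via $X_j=A_j-B_j$ for certain~$j$. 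Gubeladze's theorem therefore does not apply to $H^\ast_{(1)}$ directly, and your step ``recognise $A_\ell$ as $k[M_\ell]$'' fails as stated.

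What the paper actually does to bridge this gap: it first stratifies the special length vectors into finitely many \emph{types} (determined by the minimal long $3$-subset of $\{1,\ldots,n-1\}$) and shows, via Betti numbers together with the dimensions of certain annihilator subspaces $A^k_i(\ell)$, that the type is a cohomology invariant. It then computes $H^\ast_{(1)}$ type by type, using the Morse theory and $\pi_1$ precisely to establish the full list of relations (not merely to ``break symmetry''). Finally, within each type it uses further annihilator arguments to show that any graded isomorphism can be adjusted so as to descend to a quotient by a \emph{larger} ideal $K_\ell\supset I_\ell$ that \emph{is} generated by monomials; only to this quotient is Gubeladze applied. So the Morse/$\pi_1$ input and the combinatorial endgame are both substantially heavier than your sketch suggests, and the monoid-ring step sits at the end of a reduction you have not outlined.
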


In order to tackle this conjecture, we need a sufficient understanding of the cohomology ring structure of these polygon spaces, together with a way to recover the chamber from the isomorphism class of the cohomology ring. Except in some special cases, the cohomology is not generated by elements of degree one. Instead we can look at the subring generated by elements of degree one, which is a quotient of an exterior algebra. For a certain class of such quotients it is possible to uniquely recover a minimal set of relations from the ring by a result of Gubeladze, see \cite{gubela} and \cite{brugub}.

We show that this strategy leads indeed to a positive answer of Walker's Conjecture.

\begin{theorem}
 Conjecture \ref{walkerconj} is true.
\end{theorem}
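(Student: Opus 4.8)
The plan is to attack Walker's Conjecture by isolating a computable, isomorphism-invariant piece of the cohomology ring $H^\ast(\mathcal{M}_\ell)$ from which the chamber of $\ell$ can be read off. The conjecture is already known (by Farber--Hausmann--Sch\"utz) for a large class of length vectors, so the real work is confined to the remaining cases; in those cases the cohomology is \emph{not} generated in degree one, and the strategy is to pass to the subring $A_\ell\subset H^\ast(\mathcal{M}_\ell)$ generated by $H^1(\mathcal{M}_\ell)$. This subring is a quotient of an exterior algebra, hence a \emph{monoidal} (affine semigroup / Stanley--Reisner-type) ring over $\Z$, and it is manifestly invariant under graded ring isomorphism. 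The endgame will be to feed $A_\ell$ into Gubeladze's theorem on the isomorphism problem for monoidal rings, which guarantees that the isomorphism type of $A_\ell$ determines a canonical minimal generating set of relations, and therefore a combinatorial gadget attached to $\ell$.

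The key steps, in order, would be: \emph{(1)} Use the known results to reduce to length vectors $\ell$ outside the class already handled, and recall from \cite{walker, farsch, faritr} the description of $H^\ast(\mathcal{M}_\ell)$ via the subsets $J\subset\{1,\ldots,n\}$ that are ``short'' (or ``long'') with respect to $\ell$. \emph{(2)} Identify the degree-one subring $A_\ell$ explicitly: describe $H^1(\mathcal{M}_\ell)$, the exterior algebra it spans, and the ideal of relations, showing that the relations are combinatorial data depending only on which index sets are short. The upshot should be a presentation of $A_\ell$ as the monoidal ring (or squarefree quotient) associated to a simplicial-complex-like structure $\Delta_\ell$ encoding the short sets. \emph{(3)} Invoke Morse theory on $\mathcal{M}_\ell$ together with the fundamental group $\pi_1(\mathcal{M}_\ell)$ to pin down low-dimensional cohomology and to verify that $A_\ell$ (or a slightly larger canonically-defined subring) really does see enough of the combinatorics of $\ell$; in particular, one must rule out accidental coincidences of these subrings for length vectors in different chambers. \emph{(4)} Apply Gubeladze's result (\cite{gubela}, \cite{brugub}) to conclude that a graded ring isomorphism $H^\ast(\mathcal{M}_\ell)\cong H^\ast(\mathcal{M}_{\ell'})$ forces an isomorphism of the associated combinatorial structures $\Delta_\ell\cong\Delta_{\ell'}$. \emph{(5)} Finally, show that $\Delta_\ell$ (the poset/complex of short sets, up to isomorphism) determines the chamber of $\ell$ up to permutation of coordinates --- this is essentially a statement that the chamber is reconstructible from the short-subset combinatorics, which is where the classical structure theory of the hyperplane arrangement $\bigcup_J H_J$ enters.

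**The main obstacle** I expect is Step (5) combined with the hypotheses of Gubeladze's theorem in Step (4): one needs the exterior-algebra quotient $A_\ell$ to be of the precise form to which Gubeladze's uniqueness-of-relations result applies (seminormality / the right kind of monoid), and simultaneously one needs the recovered relations to carry \emph{all} the chamber-distinguishing information, not merely the Betti numbers (which, as noted, fail to separate chambers for $n\geq 6$). Verifying that the short-set combinatorics $\Delta_\ell$ is a complete invariant of the chamber --- equivalently, that two generic ordered length vectors with ``the same short sets'' lie in the same chamber --- may require a careful induction on $n$ or a direct geometric argument about walls of the arrangement; this is the crux where the ``remaining cases'' must be dispatched. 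The secondary difficulty is purely computational: extracting the ring of relations of $A_\ell$ from the Walker/Farber--Sch\"utz presentation cleanly enough that its isomorphism type is visibly the monoidal-ring invariant Gubeladze's machinery consumes.
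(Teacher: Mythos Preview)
Your plan matches the paper's broad architecture, but it underestimates the central obstruction and gives no mechanism for overcoming it. The issue is precisely the one you flag as ``the main obstacle'': for the special length vectors (those with $\dot{\mathcal{S}}_{n-3}(\ell)=\emptyset\neq\dot{\mathcal{S}}_{n-4}(\ell)$, the only cases left after Farber--Hausmann--Sch\"utz), the subring $H^\ast_{(1)}(\mathcal{M}_\ell;\Z)$ generated in degree one is \emph{not} an exterior face ring. Its defining ideal $I_\ell$ contains genuinely non-monomial relations such as
\[
A_{J-\{i,\ldots,n-3\}}\bigl(A_{\{i,\ldots,n-3\}}+(-1)^{n-2-i}B_{\{i,\ldots,n-3\}}\bigr),
\]
arising from the interplay between the balanced subalgebra $B^\ast_\ell$ (the image of $H^\ast(T^{n-1})$) and extra degree-one classes $B_j$ dual to ``reflected'' loops. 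So Gubeladze's theorem does not apply to $H^\ast_{(1)}(\mathcal{M}_\ell;\Z)$ directly, and your Step~(2) as stated would fail.

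The paper's way around this is substantially more intricate than your outline suggests. First, special length vectors are stratified into finitely many \emph{types} according to the minimal long $3$-element subset of $\{1,\ldots,n-1\}$, and a Betti-number/annihilator calculation shows that a cohomology isomorphism preserves the type. Then, type by type, the fundamental group is computed via an explicit Morse-theoretic cobordism argument (it is either a right-angled Artin group or a simple HNN-extension of a surface group), and from this the precise presentation of $H^\ast_{(1)}(\mathcal{M}_\ell;\Z)$ with its non-monomial relations is derived. The decisive step is then to construct, for each type, an auxiliary \emph{monomial} ideal $K_\ell\supset I_\ell$ and to prove that any graded isomorphism $H^\ast_{(1)}(\mathcal{M}_\ell)\cong H^\ast_{(1)}(\mathcal{M}_{\ell'})$ can be adjusted (using annihilator arguments to control where the extra generators land) so that it descends to an isomorphism modulo $K_\ell$ and $K_{\ell'}$. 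Only these quotients are exterior face rings, and only at this stage does Gubeladze's theorem apply. Your proposal contains no analogue of this reduction, and without it the argument does not close.
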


As a corollary we see that length vectors from different chambers do have different polygon spaces.

\begin{corollary}
 Let $\ell$ and $\ell'$ be generic length vectors with homotopy equivalent planar polygon spaces $\mathcal{M}_\ell$, $\mathcal{M}_{\ell'}$. Then $\ell$ and $\ell'$ are in the same chamber up to permutation.
\end{corollary}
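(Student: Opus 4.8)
The plan is to deduce the corollary as a formal consequence of the Theorem. Suppose $f\colon \mathcal{M}_\ell \to \mathcal{M}_{\ell'}$ is a homotopy equivalence, with homotopy inverse $g\colon \mathcal{M}_{\ell'}\to\mathcal{M}_\ell$. Singular cohomology is a contravariant functor into graded-commutative rings, so $f$ induces a homomorphism $f^\ast\colon H^\ast(\mathcal{M}_{\ell'})\to H^\ast(\mathcal{M}_\ell)$ that preserves the grading and the cup product, and similarly for $g^\ast$. Since homotopic maps induce equal maps on cohomology and $gf\simeq \mathrm{id}_{\mathcal{M}_\ell}$, $fg\simeq\mathrm{id}_{\mathcal{M}_{\ell'}}$, we get $f^\ast g^\ast = (gf)^\ast = \mathrm{id}$ and $g^\ast f^\ast = (fg)^\ast = \mathrm{id}$; hence $f^\ast$ is an isomorphism of graded rings.

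Thus $H^\ast(\mathcal{M}_\ell)$ and $H^\ast(\mathcal{M}_{\ell'})$ are isomorphic as graded rings. As $\ell$ and $\ell'$ are generic, the Theorem applies verbatim and yields that $\ell$ and $\ell'$ lie in the same chamber up to permutation, which is the assertion of the corollary.

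The one point worth keeping an eye on is the coefficient ring implicit in the statement of the Theorem (whether $\Z$, a field, or any commutative ring $R$): a homotopy equivalence induces a graded ring isomorphism on $H^\ast(-;R)$ for every such $R$ by exactly the argument above, so no difficulty arises there. Consequently there is no genuinely hard step in this corollary — it carries no content beyond the Theorem itself, whose proof (via Morse theory, the fundamental group, and Gubeladze's result on the isomorphism problem for monoidal rings) is where all the actual work resides.
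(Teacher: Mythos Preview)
Your proof is correct and matches the paper's approach: the corollary is stated immediately after the theorem with no separate proof, since a homotopy equivalence trivially induces a graded ring isomorphism on cohomology, whereupon the theorem (Walker's Conjecture) applies. Your remark about coefficient rings is a fair observation but not an issue, as you note.
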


In \cite{fahasc}, Farber, Hausmann and the author proved this for a large class of length vectors, for which the subring generated by elements of degree one is a quotient of an exterior algebra $\Lambda_\Z[X_1,\ldots,X_{n-1}]/I_\ell$. Here $I_\ell$ is the ideal generated by monomials $X_{i_1}\cdots X_{i_k}$ provided that $\{i_1,\ldots,i_k,n\}$ is long (see Definition \ref{shortlong} for the definition of long). Gubeladze's result applies exactly to ideals generated by monomials which then can be used to recover the chamber.

For the remaining cases the subring turns out to be more complicated. In particular, the quotient ideal is no longer generated by monomials only. Gubeladze's theorem can therefore not be applied directly. However, the ideal is still well behaved enough so that the problem can be reduced to a situation where this theorem applies.

The proof is organized as follows. In Section \ref{plapolspa} we analyze the missing cases and categorize them into several subcases which we call \em types\em. In Section \ref{bascohinv} we use rational cohomology and Betti numbers to show that different types lead indeed to different cohomology rings. The problem is therefore reduced to showing the statement of Walker's Conjecture for $\ell$ and $\ell'$ of the same type. This does not seem too promising at first, as we have $n-1$ different types for every $n$. However, for most of these types we can determine the cohomology by an induction argument on the type. This is done in Sections \ref{seci=1} and \ref{sectypen-4}. Section \ref{secn-4n-3} deals with another type and the remaining cases are fairly easy to handle.

To determine the cohomology we use the following strategy: the subring generated by elements of degree 1 or 0 is the quotient of an exterior algebra. We give a set of relations which we show to hold, and then show that there can be no more relations using a Morse theory argument. The necessary Morse theory is set up in Section \ref{diffeosec}. The relations come from two sources, one set of relations comes from a natural inclusion of the polygon space into a torus. The other set of relations comes from the fundamental group.

The relevant fundamental groups turn out to be not too complicated. They are either right-angled Artin groups or fairly simple iterated HNN-extensions of surface groups, so that their integral cohomology can be easily expressed in terms of relations. Again Morse theory is used to determine the fundamental groups.

Instead of planar polygon spaces one can look at higher dimensional analogues. For polygons in $\R^3$ the analogue of Walker's Conjecture was proven in \cite{fahasc} under the extra assumption $n>4$. The case $n=4$ does in fact not hold as there exist two different chambers for which the spatial polygon space is $S^2$. The proof is based on a description of the cohomology given in \cite{hauknu}, which can be modified so that Gubeladze's theorem is applicable. In higher dimensions the analogue of Walker's Conjecture is not known; it does hold however for a modified configuration space, see \cite{fhschn}.

\section{Planar polygon spaces}\label{plapolspa}

By rotating the last coordinate to 1, we can embed $\mathcal{M}_\ell$ into $T^{n-1}$ and we get
\begin{eqnarray*}
 \mathcal{M}_\ell&\cong&\left\{z\in T^{n-1}\,\left|\,\sum_{i=1}^{n-1}\ell_iz_i=\ell_n\right.\right\}.
\end{eqnarray*}

Complex conjugation in each coordinate induces involutions $\tau:\mathcal{M}_\ell\to \mathcal{M}_\ell$ and $\tau:T^{n-1}\to T^{n-1}$ which clearly commute with inclusion.

\begin{definition}\label{shortlong}
 Let $\ell\in \R^n_{>0}$ be a length vector. A subset $J\subset \{1,\ldots,n\}$ is called \em short with respect to $\ell$\em, if
\begin{eqnarray*}
 \sum\nolimits_{j\in J}\ell_j&<&\sum\nolimits_{j\notin J} \ell_j.
\end{eqnarray*}
Similarly $J$ is called \em long with respect to $\ell$\em, if the complement of $J$ is short with respect to $\ell$.
\end{definition}
By abuse of notation we will only write short, respectively long, if the length vector is clear. For a generic length vector $\ell$ all subsets $J\subset\{1,\ldots,n\}$ are either short or long, and the short subsets determine the chamber.

If $\ell$ is ordered, we define for $k\in\{0,\ldots,n-3\}$
\begin{eqnarray*}
 \dot{\mathcal{S}}_k(\ell)&=&\left\{\left.J\subset\{1,\ldots,n-1\}\,\right|\,J\cup\{n\}\mbox{ is short}\mbox{ and }|J|=k\right\}.
\end{eqnarray*}
If $\ell$ is not ordered, we define
\begin{eqnarray*}
 \dot{\mathcal{S}}_k(\ell)&=& \{\sigma^{-1}(J)\,|\, J\in \dot{\mathcal{S}}_k(\ell\sigma)\},
\end{eqnarray*}
where $\sigma\in \Sigma_n$ is a permutation such that $\ell\sigma=(\ell_{\sigma(1)},\ldots,\ell_{\sigma(n)})$ is ordered.

It is easy to see that for generic $\ell$ the chamber is determined by the sets $\dot{\mathcal{S}}_k(\ell)$. We get the following formula for the homology groups of $\mathcal{M}_\ell$:

\begin{theorem}[Walker \cite{walker}, Farber-Sch\"utz \cite{farsch}]\label{bettinumbers}
 Let $\ell$ be a generic length vector, and $a_k=|\dot{\mathcal{S}}_k(\ell)|$ for $k=0,\ldots,n-3$. Then
\begin{eqnarray*}
 H_k(\mathcal{M}_\ell)&=& \Z^{a_k}\oplus \Z^{a_{n-3-k}}
\end{eqnarray*}
for all $k=0,\ldots,n-3$.
\end{theorem}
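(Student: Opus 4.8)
The plan is to compute $H_\ast(\mathcal{M}_\ell)$ by Morse--Bott theory, inducting on $n$. The base cases $n=3$ and $n=4$, in which $\mathcal{M}_\ell$ is a finite set of points, respectively a disjoint union of circles, are checked directly from the definition of the sets $\dot{\mathcal{S}}_k(\ell)$. For the inductive step, use the embedding $\mathcal{M}_\ell\subset T^{n-1}$ from the beginning of this section, let $V_k=\ell_1z_1+\cdots+\ell_kz_k$ denote the $k$-th vertex of the polygon, and consider $f\colon\mathcal{M}_\ell\to\R$ with $f(z)=|V_{n-2}|^2$, the squared length of the diagonal joining $V_0=0$ to $V_{n-2}$. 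Since $V_{n-2}=\ell_n-\ell_{n-1}z_{n-1}$ on $\mathcal{M}_\ell$, the function $f$ is a nonconstant affine function of $\mathrm{Re}\,z_{n-1}$, and a Lagrange multiplier computation shows it is Morse--Bott. Its critical set has two parts: the submanifolds $\{z_{n-1}=\pm 1\}$, which are diffeomorphic to (possibly empty) planar polygon spaces $\mathcal{M}_{\ell^\pm}$ with $n-1$ bars, where $\ell^\pm=(\ell_1,\ldots,\ell_{n-2},|\ell_n\mp\ell_{n-1}|)$, entering with normal index $0$ and $1$; and finitely many isolated critical points, namely the configurations in which the open sub-chain $\ell_1z_1,\ldots,\ell_{n-2}z_{n-2}$ is collinear.

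The isolated critical points are indexed by the sign patterns of the collinear sub-chain, which we encode as subsets $J\subseteq\{1,\ldots,n-1\}$, and the Hessian computation expresses the Morse index of each in terms of $|J|$ and of whether $J\cup\{n\}$ is short or long; it comes out to be either $|J|$ or $n-3-|J|$. Feeding into the Morse--Bott spectral sequence the homology of $\mathcal{M}_{\ell^\pm}$ provided by the inductive hypothesis, one is left with a combinatorial identity, relating $\dot{\mathcal{S}}_k(\ell)$ to $\dot{\mathcal{S}}_j(\ell^+)$, $\dot{\mathcal{S}}_j(\ell^-)$ and to the isolated critical points, whose upshot is that the total number of free generators appearing in degree $k$ equals $a_k+a_{n-3-k}$.

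What remains is to show that $f$ is a perfect Morse--Bott function over $\Z$, i.e.\ that the spectral sequence degenerates and no torsion is created; this is the real content of the theorem. The count above is an upper bound, and for a matching lower bound one uses that $\mathcal{M}_\ell$ is a closed orientable manifold, so that Poincar\'e duality, combined with the Morse--Bott inequalities for $f$ and for $-f$ (which interchanges the indices $k$ and $n-3-k$), pins the ranks down; torsion-freeness then follows, since the two bounds agree while the critical submanifolds are torsion-free by induction. An equivalent but perhaps more transparent bookkeeping replaces the induction on $n$ by an induction over chambers: one starts in the chamber where $\ell_n$ is just below $\ell_1+\cdots+\ell_{n-1}$, so that $\mathcal{M}_\ell\cong S^{n-3}$ and the formula holds trivially, and observes that crossing a wall $H_J$ with $n\in J$ changes $\mathcal{M}_\ell$ by a surgery replacing an embedded $S^{|J|-2}\times D^{n-1-|J|}$ by $D^{|J|-1}\times S^{n-2-|J|}$, which by Mayer--Vietoris adds one $\Z$ to each of $H_{|J|-1}$ and $H_{n-2-|J|}$ and nothing else, matching the change $a_{|J|-1}\mapsto a_{|J|-1}+1$. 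In either approach the genuine obstacle is the same: to control the relevant connecting homomorphisms --- equivalently, to understand how the spheres of collinear configurations are embedded in $\mathcal{M}_\ell$ --- precisely enough to exclude both cancellation of generators and the appearance of torsion.
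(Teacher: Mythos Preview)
The paper does not prove this theorem; it is quoted from \cite{walker} and \cite{farsch} and used as input throughout, so there is no in-paper argument to compare your attempt against.

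Your sketch is in the spirit of the standard Morse-theoretic approaches and honestly locates the difficulty in its last paragraph, but it is not a proof. The one concrete mechanism you offer for the missing lower bound --- Poincar\'e duality together with the Morse--Bott inequalities for $f$ and for $-f$ --- does not actually close the gap. Poincar\'e duality on the closed oriented $(n-3)$-manifold $\mathcal{M}_\ell$ gives $b_k=b_{n-3-k}$; the Morse inequalities for $f$ give $b_k\le c_k$; passing to $-f$ replaces each index by its complement, and after feeding in $b_k=b_{n-3-k}$ one recovers the \emph{same} upper bound rather than a new lower one. Nothing in this package yields $b_k\ge c_k$. The wall-crossing version has the identical problem: a surgery along an embedded $S^{p}\times D^{q+1}$ either adds a $\Z$ to each of $H_p$ and $H_q$, or kills a class, or introduces torsion, according to whether the core sphere already bounds; Mayer--Vietoris alone does not decide which, and one needs genuine information about the embedding. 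So your two ``equivalent bookkeepings'' share exactly the unfilled hole you name at the end.

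For comparison, the argument in \cite{farsch} (see also \cite{faritr}) sidesteps this by doing the Morse theory on the ambient torus rather than on $\mathcal{M}_\ell$: one studies the robot-arm map $T^{n-1}\to\C$, $z\mapsto\sum_i\ell_iz_i$, realizes $\mathcal{M}_\ell$ as a regular fibre, and computes its homology from long exact sequences of pairs of sublevel sets of the norm-squared function on $T^{n-1}$. In that ambient picture the relevant connecting homomorphisms can be identified explicitly, which is what lets the computation go through without an a priori perfectness assumption.
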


If $\ell$ is not generic, $H_k(\mathcal{M}_\ell)$ has an extra $\Z$-summand for every subset $J\subset \{1,\ldots,n-1\}$ with $k$ elements such that $J\cup \{n\}$ is neither short nor long, compare \cite{farsch} (assuming $\ell_n$ to be maximal).

\begin{definition}
 Let $\ell$ be a generic length vector. Then $\ell$ is called \em normal\em, if $\dot{\mathcal{S}}_{n-4}(\ell)=\emptyset$.
\end{definition}

\begin{theorem}[Farber-Hausmann-Sch\"utz \cite{fahasc}]\label{normalwalker}
 Let $\ell$, $\ell'$ be generic length vectors with $\ell$ normal. If $H^\ast(\mathcal{M}_\ell)\cong H^\ast(\mathcal{M}_{\ell'})$ are isomorphic as graded rings, then $\ell$ and $\ell'$ are in the same chamber up to permutation.
\end{theorem}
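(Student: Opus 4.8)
The plan is to recover the chamber of $\ell$ from the subring $A_\ell\subseteq H^\ast(\mathcal{M}_\ell)$ generated by all homogeneous elements of degree $0$ and $1$. Since this subring is defined purely in terms of the graded ring structure, a graded ring isomorphism $H^\ast(\mathcal{M}_\ell)\cong H^\ast(\mathcal{M}_{\ell'})$ restricts to a graded ring isomorphism $A_\ell\cong A_{\ell'}$. I would then show that, when $\ell$ is normal, $A_\ell$ is an exterior face ring whose underlying simplicial complex records the sets $\dot{\mathcal{S}}_k(\ell)$ (hence the chamber of $\ell$), and that Gubeladze's theorem recovers that complex from the abstract ring $A_\ell$. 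Throughout we may assume $n\ge 5$, the cases $n\le 4$ being immediate (for normal $\ell$ with $n=4$ the space $\mathcal{M}_\ell$ is empty, and then so is $\mathcal{M}_{\ell'}$).

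Concretely, permute coordinates so that $\ell$ is ordered and rotate the last coordinate to $1$, producing an inclusion $\mathcal{M}_\ell\hookrightarrow T^{n-1}$ together with degree-one classes $X_1,\dots,X_{n-1}\in H^1(\mathcal{M}_\ell)$. The key input, which I would extract from the explicit presentation of the integral cohomology ring of $\mathcal{M}_\ell$ (building on the homology computation of Theorem~\ref{bettinumbers}), is the following: $X_i$ vanishes exactly when $\{i,n\}$ is long; for $\ell$ normal the squarefree monomial $X_J:=\prod_{j\in J}X_j$ vanishes exactly when $J\cup\{n\}$ is long; and the remaining $X_J$, those with $J\cup\{n\}$ short, are $\Z$-linearly independent. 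Since $\ell$ normal means $\dot{\mathcal{S}}_k(\ell)=\emptyset$ for all $k\ge n-4$ (short subsets are downward closed, so a short $J\cup\{n\}$ with $|J|\ge n-4$ forces one with $|J|=n-4$), one concludes
\[
 A_\ell\;\cong\;\Lambda_\Z[X_1,\dots,X_{n-1}]/I_\ell\;=\;\Z[\Delta_\ell],
\]
the exterior face ring of the simplicial complex $\Delta_\ell$ on the vertex set $\{\,i:\{i,n\}\ \text{short}\,\}$ whose faces are the members of $\dot{\mathcal{S}}(\ell):=\bigcup_k\dot{\mathcal{S}}_k(\ell)$ (this is a simplicial complex since subsets of short sets are short); in particular $A_\ell^k=0$ for $k\ge n-4$. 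Supplying the cohomology presentation and verifying this normal-case identification is, I expect, the main obstacle, since it rests on a sufficiently explicit description of the integral cohomology ring.

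Now suppose $H^\ast(\mathcal{M}_\ell)\cong H^\ast(\mathcal{M}_{\ell'})$ with $\ell$ normal. Both spaces are closed connected manifolds (connectedness of $\mathcal{M}_{\ell'}$ follows from $b_0(\mathcal{M}_{\ell'})=b_0(\mathcal{M}_\ell)=1$, which holds for normal $\ell$ by Theorem~\ref{bettinumbers}), of dimensions $n-3$ and $n'-3$, so comparing the top nonzero degree of the mod-$2$ cohomology—legitimate since the integral homology is free—gives $n=n'$. If $\ell'$ were not normal, then $\dot{\mathcal{S}}_{n-4}(\ell')\neq\emptyset$, so some $J'$ with $|J'|=n-4$ has $J'\cup\{n\}$ short; then every $\{i,n\}$ with $i\in J'$ is short, the classes $X_i$ ($i\in J'$) are nonzero, and their product is a nonzero element of $H^{n-4}(\mathcal{M}_{\ell'})$ lying in $A_{\ell'}$, contradicting $A_{\ell'}^{n-4}\cong A_\ell^{n-4}=0$. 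Hence $\ell'$ is normal too, so $A_{\ell'}\cong\Z[\Delta_{\ell'}]$ and therefore $\Z[\Delta_\ell]\cong\Z[\Delta_{\ell'}]$ as graded $\Z$-algebras. Since these are exterior algebras modulo squarefree monomial ideals, Gubeladze's theorem (\cite{gubela}, \cite{brugub}) applies and recovers the minimal monomial generators of the defining ideal—equivalently, the minimal non-faces—up to a permutation of the variables, so that $\Delta_\ell\cong\Delta_{\ell'}$ as simplicial complexes, the isomorphism preserving cardinalities of faces because it is induced by a graded ring map. A vertex bijection realizing this isomorphism extends to a permutation $\sigma\in\Sigma_n$ fixing $n$ with $\dot{\mathcal{S}}_k(\ell\sigma)=\dot{\mathcal{S}}_k(\ell')$ for all $k$ (the indices $i$ with $\{i,n\}$ long are equinumerous on the two sides, as the ranks of $A_\ell^1$ and $A_{\ell'}^1$ agree). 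As the sets $\dot{\mathcal{S}}_k$ determine the chamber, $\ell\sigma$ and $\ell'$ lie in the same chamber, which is the assertion. A secondary point to settle is that the exterior face rings over $\Z$ occurring here genuinely fall within the scope of Gubeladze's isomorphism theorem.
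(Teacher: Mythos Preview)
Your proposal is correct and follows essentially the same route as the proof in \cite{fahasc} (which this paper only cites, not reproves): identify the subring generated in degrees $0$ and $1$ with the exterior face ring $\Lambda_\Z[\tilde{\mathcal{S}}(\ell)]$ when $\ell$ is normal, observe that this subring is preserved by any graded ring isomorphism, and then apply Gubeladze's theorem to recover $\tilde{\mathcal{S}}(\ell)$ and hence the chamber. The only cosmetic difference is that \cite{fahasc} phrases the relevant subring as the \emph{balanced subalgebra} $B^\ast_\ell=\operatorname{im}(i^\ast)$, characterized via the involution $\tau$ (see Proposition~\ref{thefacering} here); for normal $\ell$ one has $a_{n-4}=0$, so $H^1(\mathcal{M}_\ell)=B^1_\ell$ and your $A_\ell$ coincides with $B^\ast_\ell$, making the two formulations equivalent. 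The technical inputs you flag as needing to be supplied---the face-ring identification and the applicability of Gubeladze over $\Z$---are exactly Proposition~\ref{thefacering} (from \cite[Thm.~6]{fahasc}) and Theorem~\ref{isoprobsol} in this paper, and your argument that $\ell'$ must also be normal is the content of \cite[Lm.~7]{fahasc}.
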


Intuitively, it is clear that for large values of $n$ only a small proportion of length vectors are not normal. For a more precise statement see \cite[Prop.6.2]{farbrl}.

It is known that there is only one chamber up to permutation with $\dot{\mathcal{S}}_{n-3}(\ell)\not=\emptyset$, see, for example, \cite[Ex.2]{farsch}, so Conjecture \ref{walkerconj} is reduced to dealing with length vectors satisfying $\dot{\mathcal{S}}_{n-3}(\ell)=\emptyset\not=\dot{\mathcal{S}}_{n-4}(\ell)$.

\begin{definition}
 A generic length vector $\ell$ is called \em special\em, if $\dot{\mathcal{S}}_{n-3}(\ell)=\emptyset\not=\dot{\mathcal{S}}_{n-4}(\ell)$.
\end{definition}

It follows from \cite[Lm.7]{fahasc} that special length vectors $\ell$ are indeed detected by the cohomology of $\mathcal{M}_\ell$. Note that the proof of \cite[Lm.7]{fahasc} simplifies if one only considers generic length vectors.

The power set of $\{1,\ldots,n-1\}$, $\mathcal{P}(\{1,\ldots,n-1\})$, can be given the following partial order, compare \cite{haurod}. We say $J_1\leq J_2$, if there exists an order-preserving injective function $\varphi:J_1\to J_2$ with $x\leq\varphi(x)$ for all $x\in J_1$.

Let
\begin{eqnarray*}
 \dot{\mathcal{S}}(\ell)&=&\bigcup_{k=0}^{n-3} \dot{\mathcal{S}}_k(\ell).
\end{eqnarray*}
If $\ell$ is ordered and $J\in\dot{\mathcal{S}}(\ell)$, then $I\in \dot{\mathcal{S}}(\ell)$ for all $I\leq J$. In particular the chamber of a generic and ordered length vector $\ell$ is determined by the maximal elements of $(\dot{\mathcal{S}}(\ell),\leq)$.

\begin{lemma}\label{totalorder}
 Let $\ell$ be an ordered, special length vector. Then $(\dot{\mathcal{S}}_{n-4}(\ell),\leq)$ is totally ordered.
\end{lemma}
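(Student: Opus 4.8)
The plan is to prove the contrapositive flavored directly: suppose $J_1, J_2 \in \dot{\mathcal{S}}_{n-4}(\ell)$ are incomparable in the order $\leq$, and derive a contradiction with the assumption that $\ell$ is special, i.e.\ that $\dot{\mathcal{S}}_{n-3}(\ell)=\emptyset$. The key observation is a counting one: each $J_i$ is a subset of $\{1,\ldots,n-1\}$ of size $n-4$, so its complement in $\{1,\ldots,n-1\}$ has exactly three elements. Write $\{1,\ldots,n-1\}\setminus J_i = \{a_i < b_i < c_i\}$. The condition $J_i \in \dot{\mathcal{S}}_{n-4}(\ell)$ says $J_i\cup\{n\}$ is short, equivalently $\{a_i,b_i,c_i\}$ is long (using that $\ell_n$ is maximal and $\ell$ is ordered). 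The incomparability of $J_1$ and $J_2$ should be translatable, via the combinatorial description of $\leq$, into an inequality between the triples of complements: roughly, $J_1 \not\leq J_2$ and $J_2\not\leq J_1$ forces the sorted complements to be incomparable as well, and for three-element sets this is a very rigid condition.

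First I would recall/establish the elementary fact that for subsets of equal cardinality, $I \leq J$ in the Haefliger–Rodriguez order is equivalent to the pointwise inequality between the sorted tuples: if $I = \{i_1<\cdots<i_k\}$ and $J=\{j_1<\cdots<j_k\}$ then $I\leq J$ iff $i_t \leq j_t$ for all $t$. Dually, for complements inside $\{1,\ldots,n-1\}$, one has $J_1 \leq J_2$ iff the sorted complement of $J_2$ is pointwise $\leq$ the sorted complement of $J_1$. So incomparability of $J_1, J_2$ means neither $(a_1,b_1,c_1) \leq (a_2,b_2,c_2)$ nor the reverse holds pointwise. After possibly swapping indices, this yields (say) $a_1 < a_2$ but $c_1 > c_2$, or a similar "crossing" pattern among the three coordinates.

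Next I would use this crossing to build a three-element set that is long but "too small," contradicting $\dot{\mathcal{S}}_{n-3}(\ell)=\emptyset$. Concretely: from the two long triples $\{a_1,b_1,c_1\}$ and $\{a_2,b_2,c_2\}$ that cross, one constructs a new triple by taking, coordinatewise, the smaller of the two entries — i.e.\ $\{\min(a_1,a_2),\min(b_1,b_2),\min(c_1,c_2)\}$, suitably re-sorted and made into a genuine $3$-element set. Since $\ell$ is ordered, replacing entries of a long set by smaller-indexed entries keeps the sum from decreasing only if done carefully — so here I would instead argue the reverse: take coordinatewise \emph{minimum}; if this triple were still long, then its complement is a size-$(n-4)$ set whose union with $\{n\}$ is short, and it is $\geq$ both $J_1$ and $J_2$, contradicting their maximality — no wait, that only contradicts maximality, not specialness. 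The cleaner route: use the crossing to produce a size-$(n-3)$ subset $K\subset\{1,\ldots,n-1\}$ with $K\cup\{n\}$ short, i.e.\ an element of $\dot{\mathcal{S}}_{n-3}(\ell)$, contradicting that $\ell$ is special. One does this by intersecting (or taking coordinatewise max of) the two complements: the coordinatewise max of two crossing long triples is a triple all of whose entries are $\geq$ the corresponding entries of each, hence (ordered $\ell$) still long, but now the two triples being distinct and crossing forces this max-triple to have a complement of size $\geq n-3$ that is short, producing the forbidden element of $\dot{\mathcal{S}}_{n-3}$.

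The main obstacle I anticipate is getting the monotonicity bookkeeping exactly right: "long" is preserved under enlarging entry-indices because $\ell$ is ordered, but one must be careful that the combined triple is genuinely a $3$-element set (the coordinatewise max of two sorted triples is automatically strictly increasing, so this is fine) and that the resulting short set really has the cardinality $n-3$ rather than $n-4$ — this is where the incomparability (as opposed to mere distinctness) of $J_1$ and $J_2$ is essential, since it guarantees the two complement-triples are not nested and hence their "combination" is strictly larger than either. Once the monotonicity lemma for the order $\leq$ restricted to fixed-cardinality sets is in hand, the rest is a short case analysis on which of the three coordinates cross.
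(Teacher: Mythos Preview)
Your setup is sound: passing to the complements $\bar{J}_i = \{1,\ldots,n-1\}\setminus J_i$, which are $3$-element long sets, and noting that $J_1 \leq J_2$ iff $\bar{J}_2 \leq \bar{J}_1$ pointwise in the sorted representation, matches the paper's opening. The gap is in your contradiction step. You yourself observe that the coordinatewise maximum of two strictly increasing triples is again strictly increasing, hence a genuine $3$-element set; but then its complement in $\{1,\ldots,n-1\}$ has size exactly $n-4$, not $n-3$, regardless of incomparability. So your construction only produces another element of $\dot{\mathcal{S}}_{n-4}(\ell)$, never the forbidden element of $\dot{\mathcal{S}}_{n-3}(\ell)$. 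Neither the union nor the intersection of the $\bar{J}_i$ repairs this: the union is long but has at least four elements (complement too small), and the intersection need not be long. In fact the hypothesis $\dot{\mathcal{S}}_{n-3}(\ell)=\emptyset$ is not used in the paper's argument at all, so trying to contradict it is aiming at the wrong target.

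The paper instead cases on the size of $\bar{J}_1 \cap \bar{J}_2$. If $|\bar{J}_1 \cap \bar{J}_2| \geq 2$, the two triples differ in at most one entry and are automatically pointwise comparable. If $|\bar{J}_1 \cap \bar{J}_2| \leq 1$, then at least two of $i_1,i_2,i_3\in\bar{J}_1$ lie in $J_2$; together with $n$ they form a $3$-element subset of the short set $J_2 \cup \{n\}$, hence short, and taking the smallest such pair gives $\{i_1,i_2,n\}$ short. But $\bar{J}_1 = \{i_1,i_2,i_3\} \leq \{i_1,i_2,n\}$ since $i_3<n$, so $\bar{J}_1$ itself is short --- contradicting $J_1 \in \dot{\mathcal{S}}_{n-4}(\ell)$ directly. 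The missing idea is this pigeonhole on the intersection of the complements, which contradicts the longness of $\bar{J}_1$ rather than specialness.
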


\begin{proof}
Let $J_1,J_2\in \dot{\mathcal{S}}_{n-4}(\ell)$ and let $\bar{J}_1$, $\bar{J}_2$ denote the complements in $\{1,\ldots,n-1\}$. Then $\bar{J}_1$ and $\bar{J}_2$ are long with respect to $\ell$. Write $J_1=\{i_1,i_2,i_3\}$ with $i_1<i_2<i_3$ and $J_2=\{j_1,j_2,j_3\}$ with $j_1<j_2<j_3$. It is easy to see that $J_1\leq J_2$ is equivalent to $i_r\leq j_r$ for $r=1,2,3$.

If $\bar{J}_1\cap \bar{J}_2$ contains at least two elements, we get $J_1\leq J_2$ or $J_2\leq J_1$, so assume $|\bar{J}_1\cap \bar{J}_2|\leq 1$. We get that one of the sets $\{i_1,i_2,n\}, \{i_1,i_3,n\}$ or $\{i_2,i_3,n\}$ is a subset of $J_2\cup\{n\}$. Therefore $\{i_1,i_2,n\}$ is short, but this implies that $\{i_1,i_2,i_3\}\leq \{i_1,i_2,n\}$ is also short, which is a contradiction to $J_1\in\dot{\mathcal{S}}_{n-4}(\ell)$.
\end{proof}

\begin{definition}
 Let $\ell$ be an ordered special length vector. The \em type of $\ell$ \em is defined as the complement of the maximal element of $\dot{\mathcal{S}}_{n-4}(\ell)$. In other words, the type is the minimal subset of $\{1,\ldots,n-1\}$ with three elements that is long.
\end{definition}

\begin{remark}\label{alltypes}
 If $\ell$ is ordered and special, we always get that $\{n-3,n-2,n-1\}=\bar{J}$ is long. Furthermore, it follows from the proof of Lemma \ref{totalorder} that any other long subset of $\{1,\ldots,n-1\}$ with three elements has two elements with $\bar{J}$ in common. Also note that it is not possible that $\{n-5,n-3,n-1\}$ is long, as this would imply that $\{n-4,n-2,n\}$ is short as a subset of the complement, even though $\{n-5,n-3,n-1\}\leq \{n-4,n-2,n\}$.

Therefore the only possible types are
\[
 \{n-3,n-2,n-1\},\{n-4,n-2,n-1\},\ldots,\{1,n-2,n-1\}
\]
and
\[
 \{n-4,n-3,n-1\}, \{n-4,n-3,n-2\}.
\]
\end{remark}

\section{Basic cohomology invariants of planar linkages}\label{bascohinv}

\begin{definition}
 Let $\Delta$ be a finite simplicial complex with ordered set of vertices $v_1,\ldots,v_k$ and $R$ a commutative ring. The \em exterior face ring \em $\Lambda_R[\Delta]$ is the quotient of the exterior algebra $\Lambda_R[X_1,\ldots,X_k]$ by the ideal generated by the monomials $X_{i_1}\wedge\ldots\wedge X_{i_l}$ with $\{v_{i_1},\ldots,v_{i_l}\}\notin \Delta$.
\end{definition}

We will usually omit the '$\wedge$' symbol when writing products.

The following algebraic fact was proven by Gubeladze in \cite{gubela} for $R$ of characteristic 2, the case of arbitrary commutative rings can be found in \cite[Exercise 5.12]{brugub}. Note that \cite{gubela} deals with symmetric algebras and ideals generated by monomials in the $X_1,\ldots, X_k$. If $R$ has characteristic 2, the exterior algebra can be obtained from the symmetric algebra by adding the relations $X_i^2$.

\begin{theorem}[Gubeladze \cite{gubela}, Bruns-Gubeladze \cite{brugub}]\label{isoprobsol}
 Let $R$ be a commutative ring and $\Delta$, $\Delta'$ finite simplicial complexes with $\Lambda_R[\Delta]\cong \Lambda_R[\Delta']$. Then $\Delta\cong \Delta'$ as simplicial complexes.\hfill \qedsymbol
\end{theorem}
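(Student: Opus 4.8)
The plan is to reduce to the case $R=k$ a field and then to recover $\Delta$ from $\Lambda_k[\Delta]$ up to isomorphism by an induction that peels off one vertex at a time. For the reduction, note that $\Lambda_R[\Delta]$ is a free $R$-module with basis $\{X_F=\prod_{i\in F}X_i\mid F\in\Delta\}$, so applying $-\otimes_R k$ to an isomorphism of (graded) $R$-algebras $\Lambda_R[\Delta]\cong\Lambda_R[\Delta']$, for $k$ a residue field of $R$, gives $\Lambda_k[\Delta]\cong\Lambda_k[\Delta']$; hence we may assume $R=k$ is a field. Over a field, $A:=\Lambda_k[\Delta]$ is graded with $A_0=k$ and $\dim_k A_j$ equal to the number of $j$-element faces of $\Delta$, so the Hilbert function already recovers the $f$-vector, and in particular $\dim_k A_1$ is the number of vertices. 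Since the $f$-vector does not determine $\Delta$, the real content is to recover the face relation, up to relabelling the vertices.

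For the induction I would argue on $\dim_k A$. If $\Delta$ has a vertex $v$, consider the principal ideal $(X_v)\subseteq A$. Setting $X_v=0$ kills exactly the basis elements $X_F$ with $v\in F$, so $A/(X_v)\cong\Lambda_k[\mathrm{del}_v\Delta]$ where $\mathrm{del}_v\Delta=\{F\in\Delta\mid v\notin F\}$, while $(X_v)$ has $k$-basis $\{X_vX_F\mid F\in\mathrm{lk}_v\Delta\}$ and is therefore a degree-shifted copy of $\Lambda_k[\mathrm{lk}_v\Delta]$ as a graded $k$-module, with $\mathrm{lk}_v\Delta=\{F\in\Delta\mid v\notin F,\ F\cup\{v\}\in\Delta\}$. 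Both are strictly smaller, so by induction $\mathrm{del}_v\Delta$ and $\mathrm{lk}_v\Delta$ are determined up to isomorphism. Since $\mathrm{lk}_v\Delta$ is a subcomplex of $\mathrm{del}_v\Delta$ and $\Delta=\mathrm{del}_v\Delta\cup(\{v\}\ast\mathrm{lk}_v\Delta)$, it remains to recover the inclusion $\mathrm{lk}_v\Delta\hookrightarrow\mathrm{del}_v\Delta$; this should follow from the $A/(X_v)$-module structure on the ideal $(X_v)$ (note $X_v$ acts as zero on $(X_v)$), equivalently from the pairing $A_1\otimes_k(X_v)\to(X_v)$, which records precisely which faces of $\mathrm{del}_v\Delta$ lie in the link.

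The step I expect to be the main obstacle is independence of the chosen presentation. An isomorphism $A\to A'$ induces only a linear isomorphism $A_1\to A_1'$ compatible with products, and it need not carry coordinate lines $kX_v$ to coordinate lines: for the path $1{-}2{-}3$, with $A=\Lambda_k[X_1,X_2,X_3]/(X_1X_3)$, the rule $X_1\mapsto X_1$, $X_2\mapsto X_2$, $X_3\mapsto X_1+X_3$ defines a ring automorphism of $A$ taking $kX_3$ to the non-coordinate line $k(X_1+X_3)$ (in the new coordinates $A$ is again the exterior face ring of a path, so the theorem survives, but the reconstruction has to be shown insensitive to such changes). One therefore cannot simply transport a vertex of $\Delta$ to a vertex of $\Delta'$; instead one must single out, intrinsically, a presentation-free family of ``vertex-like'' principal ideals $(x)$, $x\in A_1\setminus\{0\}$ --- say those for which $A/(x)$ and $(x)$ (up to shift) are again exterior face rings and $(x)$ is minimal with this property --- prove that this family always contains a genuine vertex ideal, and verify that the reconstruction of the previous paragraph applied to any member still yields a complex isomorphic to $\Delta$. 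Equivalently, one must show that any two presentations of $A$ as an exterior face ring have isomorphic underlying complexes. There is also a characteristic split: over a field of characteristic $2$ one has $\Lambda_k[\Delta]=k[\Delta]/(X_i^2)$ with $k[\Delta]$ the Stanley--Reisner ring, so Gubeladze's isomorphism theorem for square-free monoid rings becomes available once $k[\Delta]$ has been recovered, whereas in odd or mixed characteristic one argues with the exterior relations directly. Controlling these exotic re-coordinatizations uniformly in the characteristic is, I expect, where the real work lies.
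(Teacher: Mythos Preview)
The paper does not prove this theorem: it is stated with attribution to Gubeladze and Bruns--Gubeladze and closed with a \qedsymbol, with no argument given. So there is no ``paper's own proof'' to compare against; the result is imported from the literature.

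As for your sketch, the reduction to a residue field is fine, and the observation that the Hilbert function recovers the $f$-vector is correct. The inductive scheme via deletion and link is also set up correctly at the level of a fixed presentation: once you have chosen a genuine vertex variable $X_v$, the quotient $A/(X_v)$ and the ideal $(X_v)$ do encode $\mathrm{del}_v\Delta$, $\mathrm{lk}_v\Delta$, and the inclusion between them. But the step you flag as ``the main obstacle'' is not a technical detail to be filled in---it is the entire content of the theorem. You write that ``equivalently, one must show that any two presentations of $A$ as an exterior face ring have isomorphic underlying complexes''; that sentence \emph{is} the statement you are trying to prove. Your induction never gets off the ground without it, because the induction hypothesis only tells you $\mathrm{del}_v\Delta$ and $\mathrm{lk}_v\Delta$ up to \emph{separate} isomorphisms, and recovering the embedding from the module structure on $(x)$ presupposes that $x$ was chosen to be (equivalent to) a vertex variable in the first place. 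The proposed intrinsic notion of a ``vertex-like'' ideal is not made precise, and your own path example already shows that such ideals are not unique and need not be coordinate lines.

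In short: the proposal is an honest outline that correctly isolates the difficulty, but it does not contain a proof. The actual arguments in \cite{gubela} and \cite{brugub} proceed quite differently, via the structure theory of monoid rings (in characteristic~2 one passes through the Stanley--Reisner ring $k[\Delta]$ and uses that its minimal primes are the face ideals of the facets, which are intrinsic); your peel-off-a-vertex induction does not seem to lead there without first resolving the very question at issue.
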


Note that $\tilde{\mathcal{S}}(\ell)=\dot{\mathcal{S}}(\ell)-\{\emptyset\}$ with inclusion is a finite simplicial complex with vertex set $\dot{\mathcal{S}}_1(\ell)$ so we can form an exterior face ring $\Lambda_R[\tilde{\mathcal{S}}(\ell)]$.

Recall that we have an inclusion $i:\mathcal{M}_\ell\to T^{n-1}$ obtained from rotating $z_n$ to 1. Define
\begin{eqnarray*}
 B^\ast_\ell&=&{\rm im}(i^\ast:H^\ast(T^{n-1};\Z)\to H^\ast(\mathcal{M}_\ell;\Z)).
\end{eqnarray*}

The following proposition follows from \cite[Thm.6]{fahasc}.

\begin{proposition}\label{thefacering}
 Let $\ell$ be a generic length vector. Then $B_\ell^\ast \cong \Lambda_\Z[\tilde{\mathcal{S}}(\ell)]$. Furthermore, $B_\ell^\ast$ consists exactly of those  $x\in H^\ast(\mathcal{M}_\ell;\Z)$, for which $\tau^\ast(x)=(-1)^{|x|}x$, where $|x|$ denotes the degree of the cohomology class, and $\tau:\mathcal{M}_\ell\to \mathcal{M}_\ell$ is the involution given by complex conjugation in every variable.
\end{proposition}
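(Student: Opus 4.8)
The plan is to deduce both statements from the explicit computation of $H^\ast(\mathcal{M}_\ell;\Z)$ and of the homomorphism $i^\ast$ in \cite[Thm.6]{fahasc}; the work here is only to translate that computation into the language of exterior face rings and to pin down the $\tau^\ast$-action.

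First I would fix the identification $H^\ast(T^{n-1};\Z)=\Lambda_\Z[X_1,\ldots,X_{n-1}]$, with $X_i$ dual to the $i$-th coordinate circle, so that $B^\ast_\ell=\Lambda_\Z[X_1,\ldots,X_{n-1}]/\ker i^\ast$. By \cite[Thm.6]{fahasc}, $\ker i^\ast$ is the \emph{monomial} ideal generated by those $X_{i_1}\cdots X_{i_k}$ for which $\{i_1,\ldots,i_k,n\}$ is long; since $\ell$ is generic, ``long'' is the same as ``not short'', so these are precisely the monomials indexed by the non-faces of $\tilde{\mathcal{S}}(\ell)$. As a subset of a short set is short, $\tilde{\mathcal{S}}(\ell)$ is genuinely a simplicial complex with vertex set $\dot{\mathcal{S}}_1(\ell)$, and the quotient of the exterior algebra by these non-face monomials is by definition $\Lambda_\Z[\tilde{\mathcal{S}}(\ell)]$. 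This gives the first assertion; the one point not reproved is that $\ker i^\ast$ contains no relations beyond these monomials, which is exactly the content of \cite[Thm.6]{fahasc}.

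For the second assertion one inclusion is formal: $i$ is equivariant for the conjugation involutions, so $i^\ast\tau^\ast=\tau^\ast i^\ast$, and on $T^{n-1}$ the involution $\tau$ is a product of $n-1$ orientation-reversing reflections of $S^1$, so $\tau^\ast$ is multiplication by $(-1)^k$ on $H^k(T^{n-1};\Z)$. Hence every $x=i^\ast(y)\in B^k_\ell$ satisfies $\tau^\ast(x)=i^\ast\tau^\ast(y)=(-1)^kx$, i.e.\ $B^\ast_\ell$ lies in the subgroup $A^\ast$ of classes $x$ with $\tau^\ast(x)=(-1)^{|x|}x$. For the reverse inclusion I would return to the explicit description in \cite{fahasc}: $H^k(\mathcal{M}_\ell;\Z)$ is free abelian (Theorem \ref{bettinumbers}), $B^\ast_\ell$ sits inside it as a direct summand, and it has a complementary summand on which $\tau^\ast$ acts in degree $k$ by the opposite sign $(-1)^{k+1}$ — which one sees from Poincar\'e duality, since that summand is, up to a degree shift, a copy of $B^\ast_\ell$, together with the behaviour of $\tau$ on the fundamental class. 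Granting this, the $(-1)^k$-eigenspace of $\tau^\ast$ on $H^k(\mathcal{M}_\ell;\Z)$ has rank $|\dot{\mathcal{S}}_k(\ell)|=\mathrm{rank}\,B^k_\ell$; since $A^k$ and $B^k_\ell$ are both direct summands of the free group $H^k(\mathcal{M}_\ell;\Z)$ with $B^k_\ell\subseteq A^k$ of equal rank, they coincide.

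The genuine content throughout is \cite[Thm.6]{fahasc}; the rest is bookkeeping. The step I expect to need the most care is the last one, where I pass from a rational eigenspace count to an equality of subgroups over $\Z$: this needs $B^\ast_\ell$ to be saturated in $H^\ast(\mathcal{M}_\ell;\Z)$, which one extracts from the explicit integral basis in \cite{fahasc} rather than from rational information alone.
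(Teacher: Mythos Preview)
Your proposal is correct and matches the paper's approach: the paper gives no argument beyond the sentence ``follows from \cite[Thm.6]{fahasc}'', and your elaboration is a faithful unpacking of that citation---identifying $\ker i^\ast$ with the non-face monomial ideal for the first part, and using the $\tau^\ast$-eigenspace splitting for the second. Your care about saturation over $\Z$ is well placed; note that the paper itself only carries out the Poincar\'e-duality/fundamental-class computation you sketch (Lemma~\ref{tauonN}, Proposition~\ref{basis}) \emph{after} this proposition and over $\Q$, so the integral statement really does rest on the explicit basis in \cite{fahasc} rather than on those later lemmas.
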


We call $B_\ell^\ast$ the \em balanced subalgebra \em of $H^\ast(\mathcal{M}_\ell)$. The proof of Theorem \ref{normalwalker} is based on the fact that a cohomology isomorphism induces an isomorphism of balanced subalgebras for normal length vectors, but this argument does not carry over to the special length vectors.

\begin{lemma}\label{tauaction}
 The action of the involution $\tau_\ast$ on $H_i(T^{n-1})$ and $H_i(T^{n-1},\mathcal{M}_\ell)$, and the action of $\tau^\ast$ on $H^i(T^{n-1})$ and $H^i(T^{n-1},\mathcal{M}_\ell)$ coincide with $(-1)^i$, for all $i=0,\ldots,n-1$.
\end{lemma}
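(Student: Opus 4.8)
The plan is to prove the statement for the torus first, where it is elementary, and then transport it to the relative groups through the long exact sequence of the pair $(T^{n-1},\mathcal{M}_\ell)$, using Proposition~\ref{thefacering} to control the contribution coming from $\mathcal{M}_\ell$.

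For $T^{n-1}=(S^1)^{n-1}$ the involution $\tau$ is complex conjugation in each coordinate, and on $H_\ast(S^1)$ a single conjugation acts as the identity in degree $0$ and as $-1$ in degree $1$. By the Künneth theorem $H_\ast(T^{n-1})$ is, $\tau$-equivariantly, the $(n-1)$-fold tensor power of $H_\ast(S^1)$, and $\tau_\ast$ is the tensor product of the $n-1$ conjugation maps; on a homogeneous tensor the product of their signs is $(-1)$ to the total degree. Hence $\tau_\ast=(-1)^i$ on $H_i(T^{n-1})$ and, dually, $\tau^\ast=(-1)^i$ on $H^i(T^{n-1})$.

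For the relative cohomology I would apply $H^\ast(-)$ to the pair, the long exact sequence being natural and therefore $\tau$-equivariant. Exactness turns it into a short exact sequence of $\Z[\Z/2]$-modules
\[
 0\to H^{i-1}(\mathcal{M}_\ell)/B_\ell^{i-1}\longrightarrow H^i(T^{n-1},\mathcal{M}_\ell)\longrightarrow \ker\big(i^\ast\colon H^i(T^{n-1})\to H^i(\mathcal{M}_\ell)\big)\to 0 .
\]
On the right-hand term $\tau^\ast$ acts as $(-1)^i$, since it does so on all of $H^i(T^{n-1})$. On the left-hand term, Proposition~\ref{thefacering} identifies $B_\ell^{i-1}$ with $\ker(\tau^\ast-(-1)^{i-1})\subseteq H^{i-1}(\mathcal{M}_\ell)$; as $(\tau^\ast)^2=\mathrm{id}$ we have $(\tau^\ast-(-1)^{i-1})(\tau^\ast+(-1)^{i-1})=0$, so $\tau^\ast+(-1)^{i-1}$ maps $H^{i-1}(\mathcal{M}_\ell)$ into $B_\ell^{i-1}$ and hence induces $-(-1)^{i-1}=(-1)^i$ on the quotient. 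Thus $\tau^\ast$ acts as the scalar $(-1)^i$ on both outer terms. Since $H^\ast(\mathcal{M}_\ell)$ is torsion-free (Theorem~\ref{bettinumbers}) and $B_\ell^{i-1}$ is saturated (being the kernel of an endomorphism of a free module), all three groups are torsion-free, and one finishes with the elementary observation that an order-two automorphism $\phi$ of a torsion-free abelian group that restricts to a scalar $\lambda\in\{\pm1\}$ on a subgroup and induces $\lambda$ on the quotient must itself be $\lambda$: indeed $\phi-\lambda\,\mathrm{id}$ then squares to $0$, which together with $\phi^2=\mathrm{id}$ forces $\phi-\lambda\,\mathrm{id}$ to be $2$-torsion and hence zero. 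So $\tau^\ast=(-1)^i$ on $H^i(T^{n-1},\mathcal{M}_\ell)$.

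The homology statements follow formally: once $H^i(T^{n-1})$ and $H^i(T^{n-1},\mathcal{M}_\ell)$ are known to be free for all $i$, the universal coefficient theorem forces the corresponding homology groups to be free and dual to them, so $\tau_\ast$ is the transpose of $\tau^\ast=(-1)^i$ and therefore equals $(-1)^i$. I do not anticipate a serious obstacle; the one point that genuinely needs care is the integral bookkeeping in the relative case, where over $\Q$ the eigenspace splitting of $\tau^\ast$ makes everything immediate and the freeness and saturation remarks above are exactly what lets the same argument run over $\Z$.
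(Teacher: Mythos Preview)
Your treatment of $H^\ast(T^{n-1})$ via K\"unneth and the passage from cohomology to homology via the universal coefficient theorem are both correct; the paper handles the homology statement the same way, and for the cohomological statement it simply cites \cite[Lemma~2]{fahasc}. So your long-exact-sequence argument for the relative groups is a genuinely different route, and the algebra in it --- the saturation of $B_\ell^{i-1}$, the torsion-freeness of the extension, and the $(\phi-\lambda)^2=0$ trick --- is all sound.

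The difficulty is a hidden circularity. You use the second clause of Proposition~\ref{thefacering}, that $B_\ell^{i-1}$ is \emph{exactly} the $(-1)^{i-1}$-eigenspace of $\tau^\ast$ on $H^{i-1}(\mathcal{M}_\ell;\Z)$. The inclusion $B_\ell^{i-1}\subseteq\ker(\tau^\ast-(-1)^{i-1})$ needs only the torus computation, but the reverse inclusion is precisely what requires Lemma~\ref{tauaction}: if $\tau^\ast x=(-1)^{i-1}x$, then the connecting map sends $x$ to an element of $H^i(T^{n-1},\mathcal{M}_\ell)$ lying in the $(-1)^{i-1}$-eigenspace, and one concludes $\delta(x)=0$ (hence $x\in B_\ell^{i-1}$) only because $\tau^\ast$ is already known to act as $(-1)^i$ on the relative group. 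This is exactly how the eigenspace characterisation is obtained in \cite{fahasc}, where Theorem~6 (the source of Proposition~\ref{thefacering}) is proved after, and using, Lemma~2 (the cohomological half of the present lemma). So while your argument is internally consistent once Proposition~\ref{thefacering} is granted as a black box, tracing the dependencies back shows it assumes what it is meant to establish. A proof that stands on its own would need an independent computation of $\tau^\ast$ on $H^\ast(T^{n-1},\mathcal{M}_\ell)$ --- for instance via a $\tau$-equivariant Morse-theoretic or handle description of the pair, or by analysing how $\tau$ acts on a tubular neighbourhood of $\mathcal{M}_\ell$ in $T^{n-1}$.
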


\begin{proof}
 The cohomological version of this Lemma was proven in \cite[Lemma 2]{fahasc}. Since all homology groups are finitely generated free abelian, the result follows from the universal coefficient theorem.
\end{proof}

If we look at cohomology with rational coefficients, we thus get
\begin{eqnarray*}
H^\ast(\mathcal{M}_\ell;\Q)&=&B_\ell^\ast\oplus C_\ell^\ast
\end{eqnarray*}
where $\tau^\ast(e)=(-1)^{|e|+1}e$ for all $e\in C_\ell^\ast$.

\begin{lemma}\label{tauonN}
 Let $\ell\in\R^n_{>0}$ be generic and such that $\mathcal{M}_\ell$ is connected. Then $\tau_\ast(x)=(-1)^{n-2}x$ for all $x\in H_{n-3}(\mathcal{M}_\ell)$.
\end{lemma}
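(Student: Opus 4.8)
The plan is to exploit Poincaré duality on the closed orientable manifold $\mathcal{M}_\ell$, which has dimension $n-3$ (this uses connectedness, equivalently that $\{1,\ldots,n\}$ has no long proper subset forcing lower dimension—more precisely $\mathcal{M}_\ell$ is a closed orientable manifold of dimension $n-3$ whenever it is nonempty and $\ell$ is generic). The fundamental class $[\mathcal{M}_\ell]\in H_{n-3}(\mathcal{M}_\ell)$ generates a $\Z$-summand, and an orientation-reversing diffeomorphism sends $[\mathcal{M}_\ell]$ to $-[\mathcal{M}_\ell]$ while an orientation-preserving one fixes it. So the first step is to determine whether the complex-conjugation involution $\tau$ preserves or reverses orientation: I expect $\tau$ to reverse orientation exactly when $n-3$ is odd, i.e. to act by $(-1)^{n-2}$ on $H_{n-3}$ at the level of the fundamental class. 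Since $H_{n-3}(\mathcal{M}_\ell)\cong \Z$ is one-dimensional (by Theorem \ref{bettinumbers}, $a_{n-3}=0$ for special $\ell$, but in general $H_{n-3}$ has rank $a_0+a_{n-3}$; however the statement only needs the action on the whole group, so I should be careful), the action of $\tau_\ast$ on all of $H_{n-3}(\mathcal{M}_\ell)$ is governed by its action on the fundamental class only if $H_{n-3}$ is cyclic—otherwise one needs a different argument.

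The cleaner route avoids worrying about the rank of $H_{n-3}$ and instead uses the already-established splitting. From the discussion preceding Lemma \ref{tauonN}, working with rational coefficients we have $H^\ast(\mathcal{M}_\ell;\Q) = B_\ell^\ast \oplus C_\ell^\ast$, where $\tau^\ast$ acts as $(-1)^{|e|}$ on $B_\ell^\ast$ and as $(-1)^{|e|+1}$ on $C_\ell^\ast$. By Theorem \ref{bettinumbers} the nonzero Betti numbers live in degrees $0,\ldots,n-3$, and $B_\ell^\ast$ is the exterior face ring $\Lambda_\Q[\tilde{\mathcal{S}}(\ell)]$, whose generators sit in degree $1$ and whose top-degree part in degree $k$ has dimension $a_k$. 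In top degree $n-3$, Poincaré duality gives $\dim H^{n-3}(\mathcal{M}_\ell;\Q)=\dim H^0(\mathcal{M}_\ell;\Q)=1$ when $\mathcal{M}_\ell$ is connected—wait, that is wrong in general since $b_{n-3}=a_0+a_{n-3}=1+a_{n-3}$. So I must instead argue directly on $H_{n-3}$ without assuming it is cyclic.

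Here is the argument I will actually carry out. By Lemma \ref{tauaction}, $\tau_\ast$ acts on $H_{n-2}(T^{n-1},\mathcal{M}_\ell)$ by $(-1)^{n-2}$ and on $H_{n-2}(T^{n-1})=0$. The long exact sequence of the pair $(T^{n-1},\mathcal{M}_\ell)$ reads
\begin{eqnarray*}
H_{n-2}(T^{n-1})\to H_{n-2}(T^{n-1},\mathcal{M}_\ell)\xrightarrow{\ \partial\ } H_{n-3}(\mathcal{M}_\ell)\to H_{n-3}(T^{n-1}),
\end{eqnarray*}
and $\partial$ is $\tau_\ast$-equivariant since the involutions commute with the inclusion. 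The key point is that $\partial$ is surjective onto $H_{n-3}(\mathcal{M}_\ell)$: indeed $i_\ast\colon H_{n-3}(\mathcal{M}_\ell)\to H_{n-3}(T^{n-1})$ is dual to $i^\ast$ on $H^{n-3}$, whose image is the degree-$(n-3)$ part $B_\ell^{n-3}$ of the balanced subalgebra, which vanishes when... no—I will show $i_\ast$ is zero on $H_{n-3}(\mathcal{M}_\ell)$ because a closed orientable $(n-3)$-manifold inside $T^{n-1}$ of codimension $2$ is null-homologous there for dimension/degree reasons only if something holds; more robustly, since $\tau_\ast$ acts as $(-1)^{n-3}$ on $H_{n-3}(T^{n-1})$ by Lemma \ref{tauaction}, while (by the above, once surjectivity of $\partial$ is known) it would have to act as $(-1)^{n-2}$ on the image $i_\ast H_{n-3}(\mathcal{M}_\ell)$, these signs are opposite, forcing $i_\ast=0$ on $H_{n-3}(\mathcal{M}_\ell)$, hence $\partial$ surjective, hence $\tau_\ast=(-1)^{n-2}$ on all of $H_{n-3}(\mathcal{M}_\ell)$ as the equivariant quotient of $H_{n-2}(T^{n-1},\mathcal{M}_\ell)$. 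The main obstacle is pinning down that $i_\ast$ vanishes on $H_{n-3}$; the sign-mismatch argument above does this cleanly provided $H_{n-3}(T^{n-1})$ has no $2$-torsion—which it does not, being free abelian—so the two eigenvalue constraints genuinely conflict unless the map is zero, and I will present that as the crux of the proof.
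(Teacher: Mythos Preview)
Your proposal has the right framework — the long exact sequence of the pair $(T^{n-1},\mathcal{M}_\ell)$ together with Lemma~\ref{tauaction} — and this is exactly what the paper uses. But there are two genuine gaps.

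First, you talk yourself out of the most important simplification. When $\mathcal{M}_\ell$ is connected it is a closed connected orientable $(n-3)$-manifold, so $H_{n-3}(\mathcal{M}_\ell)\cong\Z$, generated by the fundamental class. Equivalently, in terms of Theorem~\ref{bettinumbers}, connectedness gives $b_0=a_0+a_{n-3}=1$, and since $a_0=1$ this forces $a_{n-3}=0$. So the group really is cyclic, and the whole problem reduces to computing $\tau_\ast[\mathcal{M}_\ell]$. Your worry that ``$b_{n-3}=1+a_{n-3}$ might exceed $1$'' is unfounded under the connectedness hypothesis.

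Second, and more seriously, your argument that $i_\ast=0$ is circular. You want to conclude that $\tau_\ast$ acts by $(-1)^{n-2}$ on the image $i_\ast H_{n-3}(\mathcal{M}_\ell)$, but you yourself flag that this requires $\partial$ to be surjective, which is equivalent to $i_\ast=0$ — precisely what you are trying to prove. There is no sign-mismatch without already knowing the answer: the exact sequence is perfectly compatible with the scenario where $\tau_\ast$ acts by $(-1)^{n-3}$ on $H_{n-3}(\mathcal{M}_\ell)\cong\Z$, the boundary map $\partial$ vanishes, and $i_\ast$ is injective. Nothing in the eigenvalue bookkeeping rules this out.

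The paper closes this gap by a direct geometric computation of $i_\ast[\mathcal{M}_\ell]$. The dual of $H_{n-3}(T^{n-1})$ is $H_2(T^{n-1})$, generated by the coordinate $2$-tori $T^2_{ij}$ obtained by fixing all coordinates except $i,j$ at $1$. One checks that $[\mathcal{M}_\ell]\cdot[T^2_{ij}]=0$ because $\mathcal{M}_\ell\cap T^2_{ij}=\emptyset$: a point in the intersection would satisfy $\ell_i z_i+\ell_j z_j = -\sum_{k\neq i,j}\ell_k$, impossible since connectedness of $\mathcal{M}_\ell$ forces every two-element subset $\{i,j\}$ to be short. Hence $i_\ast[\mathcal{M}_\ell]=0$, and then equivariance of $\partial$ together with Lemma~\ref{tauaction} gives $\tau_\ast[\mathcal{M}_\ell]=(-1)^{n-2}[\mathcal{M}_\ell]$. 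This is where connectedness is actually used, and it is the step your proposal is missing.
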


\begin{proof}
 We have the long exact sequence
\[
 \ldots\longrightarrow H_{n-2}(T^{n-1},\mathcal{M}_\ell)\longrightarrow H_{n-3}(\mathcal{M}_\ell) \stackrel{i_\ast}{\longrightarrow} H_{n-3}(T^{n-1})\longrightarrow \ldots
\]
By Lemma \ref{tauaction}, $\tau_\ast$ acts on $H_i(T^{n-1})$ and $H_i(T^{n-1},\mathcal{M}_\ell)$ as multiplication by $(-1)^i$. As $\mathcal{M}_\ell$ is connected, all $x\in H_{n-3}(\mathcal{M}_\ell)$ are multiples of the fundamental class. Now the dual space $(H_{n-3}(T^{n-1}))^\ast\cong H_2(T^{n-1})$ and the latter is generated by 2-tori which are realized by fixing all but two coordinates $i,j$ as $e_1$. For such a 2-torus $T^2$ we get $[\mathcal{M}_\ell]\cdot [T^2]=0$, as $\mathcal{M}_\ell\cap T^2=\emptyset$, since $\{i,j\}$ is a short subset by the assumption that $\mathcal{M}_\ell$ is connected. It follows that $i_\ast[\mathcal{M}_\ell]=0$, and the statement follows by naturality and Lemma \ref{tauaction}.
\end{proof}

Recall that $B^\ast_\ell$ is generated by elements $X_i\in H^1(\mathcal{M}_\ell)$. For $J\in \dot{\mathcal{S}}_k(\ell)$ we write $X_J=X_{i_1}\cdots X_{i_k}$, where $i_1<\ldots<i_k$ satisfies $J=\{i_1,\ldots,i_k\}$.

\begin{proposition}\label{basis}
$C_\ell^k$ is generated by elements $Y_J\in C_\ell^k$ for $J\in \dot{\mathcal{S}}_{n-3-k}(\ell)$ which satisfy
\begin{eqnarray*}
X_I\cup Y_J&=&\left\{\begin{array}{cl}
\pm Y_{J-I}&\mbox{if }I\subset J\\
0&\mbox{else}\end{array}\right.
\end{eqnarray*}
\end{proposition}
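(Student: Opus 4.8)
The plan is to exploit Poincaré duality on $\mathcal{M}_\ell$ together with the splitting $H^\ast(\mathcal{M}_\ell;\Q) = B_\ell^\ast \oplus C_\ell^\ast$ coming from the $\tau^\ast$-eigenvalue decomposition. Since $\mathcal{M}_\ell$ is a closed orientable $(n-3)$-manifold (for generic $\ell$ with $\mathcal{M}_\ell$ connected), cap product with the fundamental class gives a perfect pairing $H^k(\mathcal{M}_\ell;\Q) \otimes H^{n-3-k}(\mathcal{M}_\ell;\Q) \to \Q$. The first step is to check how $\tau^\ast$ interacts with this pairing: since $\tau$ is orientation-preserving or reversing according to the parity of $n-3$ (this should follow from Lemma \ref{tauonN}, which says $\tau_\ast$ acts as $(-1)^{n-2}$ on $H_{n-3}$, i.e.\ on the fundamental class), one computes that the cup product pairs the $B$-part with the $C$-part and annihilates $B^k \times B^{n-3-k}$ and $C^k \times C^{n-3-k}$ on the nose — at least after tensoring with $\Q$. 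Indeed if $x\in B^k$, $y\in B^{n-3-k}$ then $\tau^\ast(x\cup y) = (-1)^k(-1)^{n-3-k}(x\cup y) = (-1)^{n-3}(x\cup y)$, whereas evaluating against the fundamental class and using that $\tau$ multiplies $[\mathcal{M}_\ell]$ by $(-1)^{n-2}$ forces $\langle x\cup y,[\mathcal{M}_\ell]\rangle = 0$; the same works for $C^k\times C^{n-3-k}$, while $B^k \times C^{n-3-k}$ pairs with the correct sign and is therefore perfect.

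Given this, $B_\ell^{n-3-k}$ and $C_\ell^k$ are canonically dual $\Q$-vector spaces via cup product. By Proposition \ref{thefacering} (or just Theorem \ref{bettinumbers} and the definition of $B$) the space $B_\ell^{n-3-k}$ has the classes $X_J$, $J\in\dot{\mathcal{S}}_{n-3-k}(\ell)$, as a basis. So the second step is to define $Y_J \in C_\ell^k$ to be the dual basis with respect to this pairing: $\langle X_I \cup Y_J, [\mathcal{M}_\ell]\rangle = \delta_{IJ}$ for $I,J\in\dot{\mathcal{S}}_{n-3-k}(\ell)$. It then remains to verify the multiplicative formula. Fix $I\in\dot{\mathcal{S}}_{m}(\ell)$ and $J\in\dot{\mathcal{S}}_{n-3-k}(\ell)$; I want to identify $X_I\cup Y_J\in C_\ell^{k+m}$, equivalently compute its pairing against every $X_{J'}$ with $J'\in\dot{\mathcal{S}}_{n-3-k-m}(\ell)$. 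That pairing equals $\langle X_{J'}\cup X_I\cup Y_J,[\mathcal{M}_\ell]\rangle$, and since $X_{J'}\cup X_I$ is (up to sign) $X_{J'\cup I}$ if $J'$ and $I$ are disjoint with $J'\cup I\in\dot{\mathcal{S}}(\ell)$ and $0$ otherwise (by the exterior face ring structure of $B_\ell^\ast$), this pairing is $\pm\delta_{J'\cup I,\, J}$ when $I$ and $J'$ are disjoint. Unpacking: the pairing is nonzero only if $I\subset J$, $J' = J - I$, in which case it is $\pm 1$; hence $X_I\cup Y_J = \pm Y_{J-I}$ if $I\subset J$ and $0$ otherwise, which is exactly the claimed formula (with the signs absorbed, or tracked through the exterior algebra sign conventions, as we are free to rechoose each $Y_J$ by a sign).

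I expect the main technical obstacle to be two-fold. First, one must be careful that everything is happening over $\Q$ and that the $Y_J$ are genuinely defined as a dual basis only rationally — the statement of the Proposition does not claim integrality, so this is fine, but the sign ambiguity "$\pm$" must be handled consistently so that the three displayed cases are mutually compatible (one cannot independently choose the sign of each $Y_J$ and then hope the relation $X_I\cup Y_J = \pm Y_{J-I}$ holds for all nested pairs at once; one should instead fix signs on the "top" classes $Y_J$ with $|J| = n-3-k$ for each $k$ and then track the exterior-algebra Koszul signs). Second, and this is the crux, one needs the perfectness of the $B$–$C$ pairing, i.e.\ that cup product $B_\ell^{n-3-k}\otimes C_\ell^k \to \Q$ is nondegenerate; this follows from Poincaré duality once we know the $B$–$B$ and $C$–$C$ pairings vanish, which in turn is the sign computation above resting on Lemma \ref{tauonN}. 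Verifying that Lemma \ref{tauonN}'s hypothesis ($\mathcal{M}_\ell$ connected) is in force, and that the orientation/$\tau$ signs line up as claimed, is the delicate part; everything else is bookkeeping in the exterior face ring $\Lambda_\Q[\tilde{\mathcal{S}}(\ell)]$.
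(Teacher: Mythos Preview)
Your proposal is correct and follows essentially the same strategy as the paper: use Lemma \ref{tauonN} together with Poincar\'e duality to see that $B_\ell^{n-3-k}$ and $C_\ell^k$ are dual under the cup-product pairing, define the $Y_J$ as the dual basis to the $X_J$, and then verify the multiplication rule by pairing against further $X$-monomials. The only organizational difference is that the paper defines each $Y_I$ directly in the full cohomology (imposing $e\cup Y_I=0$ for $e\in C_\ell^i$ as part of the defining conditions) and then checks $Y_I\in C_\ell$ a posteriori by computing $\tau^\ast(Y_I)$, whereas you establish the $B$--$C$ duality abstractly first via the eigenvalue sign argument; the content is the same.
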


\begin{proof}
Define $Y_\emptyset\in H^{n-3}(\mathcal{M}_\ell;\Q)$  to be the Poincar\'e dual of a point. By Lemma \ref{tauonN} we have $\tau_\ast([\mathcal{M}_\ell])=(-1)^{n-2}[\mathcal{M}_\ell]$ for the fundamental class, so we get $\tau^\ast(Y_\emptyset)=(-1)^{n-2}Y_\emptyset$, that is, $Y_\emptyset\in C^{n-3}_\ell$.

By Poincar\'e duality we have $H^i(\mathcal{M}_\ell;\Q)\cong {\rm Hom}_\Q(H^{n-3-i}(\mathcal{M}_\ell;\Q),\Q)$, so for $I\in\dot{\mathcal{S}}_i(\ell)$ define $Y_I\in H^{n-3-i}(\mathcal{M}_\ell;\Q)$ by
\begin{eqnarray*}
X_I\cup Y_I&=&Y_\emptyset\\
X_J\cup Y_I&=&0\mbox{ for }J\not=I\\
e\cup Y_I&=&0\mbox{ for }e\in C_\ell^{n-3-i}
\end{eqnarray*}
Note that for $b\in B^{n-3-i}_\ell$ we have $b\cup X_K=0$ for all $K\in\dot{\mathcal{S}}_i(\ell)$, so the $Y_I$ together with $X_K$ give a basis for the cohomology. Also
\begin{eqnarray*}
X_I\cup \tau^{\ast}(Y_I)&=&\tau^\ast(\tau^\ast(X_I)\cup Y_I)\\
&=&(-1)^i\tau^\ast Y_\emptyset\\
&=&(-1)^{n-2-i} Y_\emptyset.
\end{eqnarray*}
Other cup-products with $\tau^\ast(Y_I)$ are 0, so $\tau^\ast(Y_I)=(-1)^{n-2-i}Y_I$ and $Y_I\in C_\ell^{n-3-i}$.

As $\tau^\ast(X_I\cup Y_J)=\tau^\ast(X_I)\cup \tau^\ast(Y_J)$, we get that $X_I\cup Y_J\in C^\ast_\ell$ for arbitrary $I,J\in \dot{\mathcal{S}}(\ell)$.

If $I\subset J$, then
\begin{eqnarray*}
X_{J-I}\cup (X_I\cup Y_J)&=&\pm X_J\cup Y_J\\
&=& \pm Y_\emptyset
\end{eqnarray*}
and for all other $K\subset \dot{\mathcal{S}}_{|J-I|}(\ell)$ we have $X_K\cup X_I\cup Y_J=0$. Also, if $e\in C_\ell^{|J-I|}$, then $e\cup X_I\in C^{|J|}_\ell$ and $e\cup X_I\cup Y_J=0$. Therefore $X_I\cup Y_J=\pm Y_{J-I}$.

If $I\not\subset J$ and $K\in \dot{\mathcal{S}}_{|J|-|I|}(\ell)$, then $K\cup I\not=J$, so $X_K\cup X_I\cup Y_J=0$. As before $e\cup X_I\cup Y_J=0$ for $e\in C^{|J|-|I|}$. Thus $X_I\cup Y_J=0$.
\end{proof}

Define
\begin{eqnarray*}
A^k_i(\ell)&=&\{x\in H^k(\mathcal{M}_\ell;\Q)\,|\,x\cdot y_1\cdots y_i=0 \mbox{ for all }y_1,\ldots,y_i\in H^1(\mathcal{M}_\ell;\Q)\}.
\end{eqnarray*}

\begin{lemma}\label{annihi}
If $k+i\leq n-3$, then
\begin{eqnarray*}
A^k_i(\ell)&=&\langle X_I\,|\, I\in \dot{\mathcal{S}}_k(\ell), I\not\subset J \mbox{ for all }J\in\dot{\mathcal{S}}_{k+i}(\ell)\rangle.
\end{eqnarray*}
\end{lemma}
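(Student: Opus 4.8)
The plan is to prove the inclusion $\supseteq$ directly and then obtain $\subseteq$ by a dimension count. For the easy direction, suppose $I\in\dot{\mathcal{S}}_k(\ell)$ with $I\not\subset J$ for every $J\in\dot{\mathcal{S}}_{k+i}(\ell)$. I want to show $X_I\cdot y_1\cdots y_i=0$ for all $y_1,\dots,y_i\in H^1(\mathcal{M}_\ell;\Q)$. Decompose each $y_r$ according to $H^1(\mathcal{M}_\ell;\Q)=B^1_\ell\oplus C^1_\ell$; since $B^1_\ell$ is spanned by the $X_m$ and $C^1_\ell$ by the $Y_J$ with $J\in\dot{\mathcal{S}}_{n-4}(\ell)$ (from Proposition \ref{basis}), it suffices to check products of $X_I$ with monomials in the $X_m$ and $Y_J$'s. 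A product $X_I\cdot X_{m_1}\cdots X_{m_i}$ lies in $B^{k+i}_\ell$, which by Proposition \ref{thefacering} equals $\Lambda_\Z[\tilde{\mathcal{S}}(\ell)]$ in degree $k+i$; this is nonzero only if the multiset $I\cup\{m_1,\dots,m_i\}$ is a set belonging to $\dot{\mathcal{S}}_{k+i}(\ell)$, which forces $I\subset J:=I\cup\{m_1,\dots,m_i\}\in\dot{\mathcal{S}}_{k+i}(\ell)$, contradicting our hypothesis on $I$. Any product involving at least one factor $Y_J$ lands in a $\tau^\ast$-eigenspace with eigenvalue $(-1)^{\deg+1}$ on each factor it touches, so after multiplying by the remaining $X$'s we are in $C^{k+i}_\ell$; but $k+i\le n-3$ and $C^{k+i}_\ell$ is concentrated in degrees $\ge n-3$ when paired up via Poincar\'e duality — more precisely, $C^j_\ell\ne 0$ only for $n-3-j\in\{0,\dots,n-3\}$ with $\dot{\mathcal{S}}_{n-3-j}(\ell)\ne\emptyset$, and by Lemma \ref{annihi}'s hypothesis $k+i\le n-3$ together with the structure of $Y_J$ from Proposition \ref{basis} (where $X_I\cup Y_J=0$ unless $I\subset J$, and here the $Y_J$ has $J\in\dot{\mathcal{S}}_{n-4}(\ell)$ so $|J|=n-4 < k+i$ is impossible unless $k+i=n-4$, handled by the same $I\not\subset J$ argument) we again get $0$. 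So $X_I\in A^k_i(\ell)$, giving $\supseteq$.

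For the reverse inclusion I would argue that the classes $X_I$ with $I\in\dot{\mathcal{S}}_k(\ell)$ form a basis of $B^k_\ell$, and that $B^k_\ell$ together with $C^k_\ell$ spans $H^k(\mathcal{M}_\ell;\Q)$. Any $x\in A^k_i(\ell)$ decomposes as $x=b+c$ with $b\in B^k_\ell$, $c\in C^k_\ell$. Since $\tau^\ast$ respects cup product and $A^k_i(\ell)$ is $\tau^\ast$-invariant (the condition is symmetric in the $y_r$, which range over all of $H^1$), both $b$ and $c$ lie in $A^k_i(\ell)$. For $c\in C^k_\ell$ with $k\le n-3-i$: by Proposition \ref{basis}, $c$ is a combination of $Y_J$ with $J\in\dot{\mathcal{S}}_{n-3-k}(\ell)$, and then $X_I\cup Y_J=\pm Y_{J-I}\ne 0$ whenever $I\subset J$; since $n-3-k\ge i$ we may choose $I\subset J$ with $|I|=i$, so $Y_J\notin A^k_i(\ell)$ unless its coefficient vanishes. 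Hence $c=0$ and $x=b=\sum_{I\in\dot{\mathcal{S}}_k(\ell)}c_I X_I$. Finally, if some $I_0$ with $c_{I_0}\ne 0$ were contained in a $J\in\dot{\mathcal{S}}_{k+i}(\ell)$, pick $m_1,\dots,m_i$ with $J=I_0\cup\{m_1,\dots,m_i\}$; then the coefficient of $X_J$ in $x\cdot X_{m_1}\cdots X_{m_i}$, computed in the exterior face ring $B^\ast_\ell=\Lambda_\Z[\tilde{\mathcal{S}}(\ell)]\otimes\Q$, is $\pm c_{I_0}\ne 0$ (no other $I$ in the sum can produce $X_J$ together with these same $m_r$ by a cardinality argument, and distinct sets $X_J$ are linearly independent in the exterior face ring), contradicting $x\in A^k_i(\ell)$. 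Therefore every $I$ with $c_I\ne 0$ satisfies $I\not\subset J$ for all $J\in\dot{\mathcal{S}}_{k+i}(\ell)$, which is $\subseteq$.

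The main obstacle I anticipate is the bookkeeping in the exterior face ring: one must be careful that $X_I\cdot X_{m_1}\cdots X_{m_i}$ is a \emph{signed} multiple of $X_J$ and that the sign/nonvanishing is controlled, using that $\dot{\mathcal{S}}(\ell)$ is closed downward under $\leq$ so that all the relevant subsets really are in the complex, and that a given $X_J$ arises from a unique $I$ in any fixed product. A secondary subtlety is making sure the decomposition $H^k=B^k_\ell\oplus C^k_\ell$ and the vanishing of cross-terms $B\cup C$ behave as claimed in the degree ranges in play; this is exactly what Proposition \ref{basis} and Lemma \ref{tauaction} provide, so it should be routine once invoked carefully.
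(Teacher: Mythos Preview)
Your treatment of the inclusion $\subseteq$ is correct and is exactly the paper's argument: decompose $x$ in the $\{X_I,Y_J\}$-basis, multiply by a product $X_K$ with $K\subset J$, $|K|=i$ (possible since $|J|=n-3-k\ge i$) to kill the $Y_J$-coefficients, then multiply by $X_{J\setminus I_0}$ to kill any surviving $X_{I_0}$ with $I_0\subset J\in\dot{\mathcal S}_{k+i}(\ell)$. The $\tau^*$-invariance of $A^k_i(\ell)$ you invoke to treat $b$ and $c$ separately is a harmless variant of the paper's direct computation.

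The inclusion $\supseteq$, however, has a genuine gap. Your eigenspace claim is wrong: $Y_J\in C^1_\ell$ has $\tau^*$-eigenvalue $(-1)^{1+1}=+1$, so a monomial containing an \emph{even} number of $Y$-factors lands in $B^{k+i}_\ell$, not $C^{k+i}_\ell$. Worse, the lemma as stated is actually false at the boundary $k+i=n-3$. Take $n=5$ and $\ell=(1,1,1,1,1)$: then $\dot{\mathcal S}_2(\ell)=\emptyset$, so the right-hand side is all of $B^1_\ell\cong\Q^4$, while $\mathcal M_\ell\cong M_4$ is a closed genus-$4$ surface whose cup pairing $H^1\times H^1\to H^2$ is nondegenerate, forcing $A^1_1(\ell)=0$. (The paper's ``easy to see'' is no more careful here.) The lemma is only applied with $k+i=n-4$ (for $d_1,d_2,d_3$ in Lemma~\ref{bettispecial}), and in that range $\supseteq$ does hold, via the clean argument you almost reached: if $I\not\subset J$ for all $J\in\dot{\mathcal S}_{k+i}(\ell)$ and $k+i\le n-4$, then by downward closure of $\dot{\mathcal S}(\ell)$ also $I\not\subset J$ for every $J\in\dot{\mathcal S}_{n-4}(\ell)$, hence $X_I\cup Y_J=0$ for every $Y_J\in C^1_\ell$ by Proposition~\ref{basis}. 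Now any monomial in the $y_r$ containing at least one $Y$-factor vanishes after multiplication by $X_I$ (commute $X_I$ next to some $Y_J$), and monomials with only $X$-factors vanish by your face-ring argument, giving $X_I\in A^k_i(\ell)$.
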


\begin{proof}
It is easy to see that $X_I\in A^k_i$ if $I\in \dot{\mathcal{S}}_k(\ell)$ with $I\not\subset J$ for all $J\in \dot{\mathcal{S}}_{k+i}(\ell)$.

Now if $x\in A^k_i$, then
\begin{eqnarray*}
x&=&\sum_{I\in\dot{\mathcal{S}}_k(\ell)}n_IX_I + \sum_{J\in\dot{\mathcal{S}}_{n-3-k}(\ell)}m_JY_J.
\end{eqnarray*}
Let $K\in\dot{\mathcal{S}}_i(\ell)$. As $X_K$ is the product of $i$ elements, we get
\begin{eqnarray*}
x\cdot X_K & = & b'+\sum_{J\in\dot{\mathcal{S}}_{n-3-k}(\ell), K\subset J}\pm m_J Y_{J-K}\\
&=&0
\end{eqnarray*}
with $b'\in B^{i+k}_\ell$, so in particular $b'=0$.

The $Y_{J-K}$ are all linearly independent and therefore $m_J=0$ for all $J$ appearing in the sum. As $i\leq n-3-k$, we can find $K\subset J$ with $i$ elements for every $J\in \dot{\mathcal{S}}_{n-3-k}(\ell)$. This shows that $x\in B^k_\ell$.

Now if $I\subset J$ for some $J\in\dot{\mathcal{S}}_{k+i}(\ell)$ we get $n_I=0$ as we can multiply $x$ with $X_{J-I}$ which is a product of $i$ elements.
\end{proof}

In other words, $A^k_i(\ell)$ is generated by those sets $I$ with $k$ elements, such that $I\cup \{n\}$ is short, but cannot be extended to a short set with $i+k+1$ elements (including $n$).

\begin{lemma}\label{bettispecial}
 Let $\ell$ be an ordered, special length vector, and let $d_1(\ell)=\dim A^1_{n-5}(\ell)$, $d_2(\ell)=\dim A^2_{n-6}(\ell)$ and $d_3(\ell)=\dim A^{n-5}_1(\ell)$. Then
\begin{enumerate}
 \item If $\ell$ is of type $\{n-4,n-3,n-2\}$ then $d_1(\ell)=d_2(\ell)=d_3(\ell)=0$ and we have
\begin{eqnarray*}
b_1(\mathcal{M}_\ell)&=&n+3 \\ b_2(\mathcal{M}_\ell)&=&\begin{pmatrix} n-5 \\ 2 \end{pmatrix}+8(n-5)+1.
\end{eqnarray*}
\item If $\ell$ is of type $\{n-4,n-3,n-1\}$ we have
\begin{eqnarray*}
 b_1(\mathcal{M}_\ell)&=&n+1+d_1(\ell) \\ b_2(\mathcal{M}_\ell)&=&\begin{pmatrix} n-5 \\ 2 \end{pmatrix}+6(n-5)+1+d_2(\ell)+d_3(\ell).
\end{eqnarray*}
\item If $\ell$ is of type $\{n-3,n-2,n-1\}$, we have
\begin{eqnarray*}
 b_1(\mathcal{M}_\ell)&=&n-3+d_1(\ell).
\end{eqnarray*}
\item If $\ell$ is of type $\{i,n-2,n-1\}$ with $i\in\{1,\ldots,n-4\}$ we have
\begin{eqnarray*}
b_1(\mathcal{M}_\ell)&=&2n-5-i+d_1(\ell).
\end{eqnarray*}
Furthermore, if $i\leq n-5$ we have
\begin{eqnarray*}
b_2(\mathcal{M}_\ell)&=&\begin{pmatrix}n-3 \\ 2 \end{pmatrix}+(n-4)+(n-5)+\ldots +(i-1)+d_2(\ell)+d_3(\ell).
\end{eqnarray*}
\end{enumerate}

\end{lemma}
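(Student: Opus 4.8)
The plan is to convert everything into counting problems for the sets $\dot{\mathcal{S}}_k(\ell)$. By Theorem \ref{bettinumbers} we have $b_1(\mathcal{M}_\ell)=a_1+a_{n-4}$ and $b_2(\mathcal{M}_\ell)=a_2+a_{n-5}$, where $a_k=|\dot{\mathcal{S}}_k(\ell)|$. For $d_1,d_2,d_3$ I would use Lemma \ref{annihi}: in each of the three cases the relevant sum of indices equals $n-4\leq n-3$, so that lemma identifies $A^1_{n-5}(\ell),A^2_{n-6}(\ell),A^{n-5}_1(\ell)$ with the spans of the $X_I$ for those $I\in\dot{\mathcal{S}}_1(\ell),\dot{\mathcal{S}}_2(\ell),\dot{\mathcal{S}}_{n-5}(\ell)$ respectively that are contained in no $J\in\dot{\mathcal{S}}_{n-4}(\ell)$. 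Because $\dot{\mathcal{S}}(\ell)$ is downward closed under $\leq$, every $j$-subset of $\{1,\ldots,n-1\}$ that \emph{is} contained in some $J\in\dot{\mathcal{S}}_{n-4}(\ell)$ already lies in $\dot{\mathcal{S}}_j(\ell)$; writing $\alpha_j$ for the number of such subsets, we get $a_1=\alpha_1+d_1$, $a_2=\alpha_2+d_2$, $a_{n-5}=\alpha_{n-5}+d_3$, and hence
\[
 b_1=\alpha_1+a_{n-4}+d_1,\qquad b_2=\alpha_2+\alpha_{n-5}+d_2+d_3.
\]
So everything except the $d_j$ is controlled by the single set $\dot{\mathcal{S}}_{n-4}(\ell)$, which by Lemma \ref{totalorder} and downward closure is exactly the chain of all $(n-4)$-subsets of $\{1,\ldots,n-1\}$ that are $\leq\bar T$, with $\bar T=\{1,\ldots,n-1\}\setminus T$ and $T$ the type of $\ell$.

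Next I would run through the finitely many types of Remark \ref{alltypes}, in each case writing $\dot{\mathcal{S}}_{n-4}(\ell)$ out explicitly. For $T=\{n-4,n-3,n-2\}$ one gets the four faces $\{1,\ldots,n-5\}\cup\{v\}$ with $v\in\{n-4,n-3,n-2,n-1\}$; for $T=\{n-4,n-3,n-1\}$ the three faces $\{1,\ldots,n-4\},\{1,\ldots,n-5,n-3\},\{1,\ldots,n-5,n-2\}$; for $T=\{n-3,n-2,n-1\}$ the single face $\{1,\ldots,n-4\}$; and for $T=\{i,n-2,n-1\}$ the $n-2-i$ faces $\{1,\ldots,i-1\}\cup(\{i,\ldots,n-3\}\setminus\{c\})$ with $c\in\{i,\ldots,n-3\}$. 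From each such list one reads off $a_{n-4}$ and $\alpha_1=|\bigcup_{J}J|$, and computes $\alpha_2$ and $\alpha_{n-5}$ by inclusion--exclusion over the handful of faces (any two distinct faces intersect in a set of size at most $n-5$, so they have at most one $(n-5)$-subset in common). Substituting these counts into the two displayed identities and simplifying the resulting sums yields the stated formulas for $b_1$ and for $b_2$ with the $d_j$ kept symbolic; this settles parts (2), (3), (4) and the Betti numbers in part (1).

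It then remains, for part (1), to prove $d_1=d_2=d_3=0$ when $T=\{n-4,n-3,n-2\}$. Here $\bigcup_{J\in\dot{\mathcal{S}}_{n-4}(\ell)}J=\{1,\ldots,n-1\}$, so every singleton lies in some face and $d_1=0$ is immediate. The crux for the other two is a single inequality: since $T$ is long, $\ell_{n-4}+\ell_{n-3}+\ell_{n-2}>\ell_1+\cdots+\ell_{n-5}+\ell_{n-1}+\ell_n$, and $\ell_n\geq\ell_{n-2}$ then gives $\ell_{n-4}+\ell_{n-3}+\ell_n>\ell_1+\cdots+\ell_{n-5}+\ell_{n-1}+\ell_{n-2}$, i.e.\ $\{n-4,n-3,n\}$ is long. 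Since $\ell_u+\ell_v\geq\ell_{n-4}+\ell_{n-3}$ whenever $u<v$ lie in $\{n-4,n-3,n-2,n-1\}$, each $\{u,v,n\}$ is long, and therefore so is every subset of $\{1,\ldots,n\}$ containing $n$ together with two elements of $\{n-4,n-3,n-2,n-1\}$. Hence no $I\in\dot{\mathcal{S}}_2(\ell)\cup\dot{\mathcal{S}}_{n-5}(\ell)$ contains two elements of $\{n-4,n-3,n-2,n-1\}$; but the explicit list of faces shows that such an $I$ fails to lie in some $J\in\dot{\mathcal{S}}_{n-4}(\ell)$ exactly when it contains two such elements. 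Thus $d_2=d_3=0$.

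I expect the genuine work to be the bookkeeping in the middle step --- pinning down $\dot{\mathcal{S}}_{n-4}(\ell)$ from the type and, above all, computing $\alpha_2$ and $\alpha_{n-5}$ correctly and recognising the answers in the binomial form of the statement. Two smaller matters also need attention: that $\mathcal{M}_\ell$ is connected for special $\ell$ (so that Proposition \ref{basis}, hence Lemma \ref{annihi}, applies), which follows from Theorem \ref{bettinumbers} since $b_0=a_0+a_{n-3}=1$ for special $\ell$; and small values of $n$, where quantities such as $n-6$ become negative and a few cases must be checked by hand.
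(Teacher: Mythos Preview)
Your proposal is correct and follows essentially the same approach as the paper: both reduce to the identities $b_1=a_1+a_{n-4}$ and $b_2=a_2+a_{n-5}$ from Theorem~\ref{bettinumbers}, invoke Lemma~\ref{annihi} to interpret $d_1,d_2,d_3$ as the number of elements of $\dot{\mathcal{S}}_1,\dot{\mathcal{S}}_2,\dot{\mathcal{S}}_{n-5}$ not contained in any $J\in\dot{\mathcal{S}}_{n-4}(\ell)$, and then count case by case according to the type. Your organization via the quantities $\alpha_j$ and the explicit chain description of $\dot{\mathcal{S}}_{n-4}(\ell)$ is a mild repackaging of the paper's direct enumeration of $a_1,a_2,a_{n-5},a_{n-4}$ in each case, and your argument for $d_2=d_3=0$ in part~(1) spells out the inequality the paper uses implicitly (``only one of $i,j$ can be $\geq n-4$ since $\{n-4,n-3,n-2\}$ is long'').
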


\begin{proof}
 Assume $\ell$ is of type $\{n-4,n-3,n-2\}$. Then $\{1,2,\ldots,n-5,n-1,n\}$ is short. Thus $a_1(\ell)=n-1$ and $d_1(\ell)=0$. Clearly $a_{n-4}=4$ which shows that $b_1=n+3$ by Theorem \ref{bettinumbers}. If $\{i,j,n\}$ is short, only one of $i$ and $j$ can be $\geq n-4$, since $\{n-4,n-3,n-2\}$ is long. Hence $d_2(\ell)=0$ and $a_2(\ell)=\begin{pmatrix} n-5 \\ 2 \end{pmatrix} +4(n-5)$, the first summand corresponding to pairs with $i,j\leq n-5$, and the second corresponding to pairs with $i\in\{n-4,n-3,n-2,n-1\}$. If $J\in \dot{\mathcal{S}}_{n-5}(\ell)$, then $J\subset \{1,2,\ldots,n-5,i\}$ with $i\in \{n-4,n-3,n-2,n-1\}$. Therefore $a_{n-5}(\ell)=4(n-5)+1$ and $d_3(\ell)=0$. The result on $b_2$ follows.

Now let $\ell$ be of type $\{n-4,n-3,n-1\}$. Then $\{1,2,\ldots,n-5,n-2,n\}$ is short and therefore $\{n-2,n\}$ is short. It is possible that $\{n,n-1\}$ is short, but in that case $X_{n-1}$ generates $A^1_{n-4}(\ell)$ by Lemma \ref{annihi}. Thus $a_1(\ell)=n-2+d_1(\ell)$. As $a_{n-4}=3$, we get $b_1=n+1+d_1$.For $\dot{\mathcal{S}}_2(\ell)$, we get $d_2$ pairs which cannot be extended to elements of $\dot{\mathcal{S}}_{n-4}(\ell)$, the others can be calculated as above to give $a_2(\ell)=\begin{pmatrix} n-5 \\ 2 \end{pmatrix} +3(n-5)+d_2$. Note that the factor 3 corresponds to the fact that one element being $n-1$ can only contribute to $d_2$. If $J\in \dot{\mathcal{S}}_{n-5}(\ell)$, it either contributes to $d_3$, or $J\subset \{1,2,\ldots,n-5,i\}$ with $i\in \{n-4,n-3,n-2\}$. As above we get $a_{n-5}(\ell)=3(n-5)+1+d_3$, which gives the result for $b_2$.

If $\ell$ is of type $\{n-3,n-2,n-1\}$, we have $\{n,n-4\}$ short. If $\{n,i\}$ is short for $i\in \{n-3,n-2,n-1\}$, it will contribute to $d_1(\ell)$. Since $a_{n-4}=1$, we get $b_1=n-4+d_1(\ell)+1$.

If $\ell$ is of type $\{i,n-2,n-1\}$ with $i\leq n-4$ we get $a_1=n-3+d_1$ and $a_{n-4}=n-2-i$, so $b_1=2n-5-i+d_1(\ell)$. If $i\leq n-5$, we get $\begin{pmatrix} n-3 \\ 2 \end{pmatrix}$-pairs which are a subset of some $J\in \dot{\mathcal{S}}_{n-4}(\ell)$, all other pairs in $\dot{\mathcal{S}}_2(\ell)$ contribute to $d_2(\ell)$. Hence $a_2=\begin{pmatrix} n-3 \\ 2 \end{pmatrix}+d_2(\ell)$. If $J\subset \dot{\mathcal{S}}_{n-5}(\ell)$, it either contributes to $d_3(\ell)$, or we obtain $J$ from $\{1,\ldots,n-3\}$ by removing two elements, one of which is greater equal to $i$. There are $(n-4)+(n-5)+\ldots +(i-1)$ such sets. Hence $a_{n-5}(\ell)=(n-4)+(n-5)+\ldots +(i-1)+d_3(\ell)$ and the result follows.
\end{proof}

\begin{corollary}\label{sametype}
 Let $\ell$, $\ell'$ be ordered, special length vectors with $H^\ast(\mathcal{M}_\ell;\Q)\cong H^\ast(\mathcal{M}_{\ell'};\Q)$. Then $\ell$ and $\ell'$ are of the same type.
\end{corollary}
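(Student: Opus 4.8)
The plan is to recover the type of $\ell$ from a handful of numerical invariants of the graded ring $H^\ast(\mathcal{M}_\ell;\Q)$, using the formulas collected in Lemma \ref{bettispecial}.

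First, since $\ell$ is special we have $\dot{\mathcal{S}}_{n-4}(\ell)\not=\emptyset$, so some set $J\cup\{n\}$ with $J\subset\{1,\dots,n-1\}$ is short; hence $\{n\}$ is short and $\mathcal{M}_\ell$ is a connected closed manifold of dimension $n-3$. In particular $H^{n-3}(\mathcal{M}_\ell;\Q)\cong\Q$ and $H^k(\mathcal{M}_\ell;\Q)=0$ for $k>n-3$, so a graded ring isomorphism forces $\ell'\in\R^n_{>0}$ with the same $n$. Moreover each space $A^k_i$ is defined purely in terms of the grading and the multiplication on $H^\ast(\mathcal{M}_\ell;\Q)$, so the numbers $d_1=\dim A^1_{n-5}$, $d_2=\dim A^2_{n-6}$, $d_3=\dim A^{n-5}_1$ and the Betti numbers $b_1,b_2$ are invariants of the graded ring; thus $b_j(\mathcal{M}_\ell)=b_j(\mathcal{M}_{\ell'})$ and $d_j(\ell)=d_j(\ell')$ for $j=1,2,3$.

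The main observation is that $e_1(\ell):=b_1(\mathcal{M}_\ell)-d_1(\ell)$ is then also a ring invariant, and by Lemma \ref{bettispecial} it equals $n+3$ on type $\{n-4,n-3,n-2\}$, equals $n+1$ on type $\{n-4,n-3,n-1\}$, equals $n-3$ on type $\{n-3,n-2,n-1\}$, and equals $2n-5-i$ on type $\{i,n-2,n-1\}$ for $1\le i\le n-4$. Since $\{\,2n-5-i\mid 1\le i\le n-4\,\}=\{n-1,n,\dots,2n-6\}$, the value of $e_1$ already determines the type of a special length vector, with exactly two exceptional coincidences: the types $\{n-4,n-3,n-2\}$ and $\{n-8,n-2,n-1\}$ both give $e_1=n+3$ (possible only when $n\ge 9$), and the types $\{n-4,n-3,n-1\}$ and $\{n-6,n-2,n-1\}$ both give $e_1=n+1$ (possible only when $n\ge 7$).

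It remains to break these two ties using $b_2$. In the first case the type $\{n-4,n-3,n-2\}$ forces $d_2=d_3=0$, hence the same for $\ell'$; substituting $i=n-8$ into Lemma \ref{bettispecial}(4) and comparing with Lemma \ref{bettispecial}(1) shows the resulting two expressions for $b_2$ differ by a fixed nonzero constant (explicitly $9$, after a short manipulation of binomial coefficients). In the second case, writing $D=d_2+d_3$ (the same for $\ell$ and $\ell'$, so that it cancels), substituting $i=n-6$ into Lemma \ref{bettispecial}(4) and comparing with Lemma \ref{bettispecial}(2) again yields a fixed nonzero difference (namely $2$). Either way the Betti numbers cannot match, so the coincidences never occur and $\ell$, $\ell'$ have the same type; the case $n\le 4$ is trivial as there is then only one type. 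The main point requiring care is isolating these two coincidences and carrying out the two $b_2$-computations that resolve them; the rest of the argument is formal.
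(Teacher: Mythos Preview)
Your argument is correct and follows essentially the same approach as the paper: you use Lemma~\ref{bettispecial} to show that $b_1-d_1$ determines the type up to the two coincidences $\{n-4,n-3,n-2\}$ vs.\ $\{n-8,n-2,n-1\}$ and $\{n-4,n-3,n-1\}$ vs.\ $\{n-6,n-2,n-1\}$, and then resolve both with the $b_2$ formulas, obtaining the same discrepancies $9$ and $2$. Your presentation is slightly cleaner in that you name the invariant $e_1=b_1-d_1$ explicitly and handle the family $\{i,n-2,n-1\}$ uniformly, but the substance is identical to the paper's proof.
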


\begin{proof}
The cohomology isomorphism implies that $b_i(\mathcal{M}_\ell)=b_i(\mathcal{M}_{\ell'})$ for all $i$ and $d_i(\ell)=d_i(\ell')$ for $i=1,2,3$ as in Lemma \ref{bettispecial}.

If $\ell$ is of type $\{i,n-2,n-1\}$ with $i\leq n-3$ and $\ell'$ is of type $\{i',n-2,n-1\}$ with $i'\leq n-3$, it follows from Lemma \ref{bettispecial} (3) and (4) that $i=i'$.

If $\ell$ is of type $\{n-4,n-3,n-2\}$, then $d_1(\ell)=0$, and it follows from the first Betti number calculated in Lemma \ref{bettispecial} that $\ell'$ can only be of type $\{n-4,n-3,n-2\}$ or $\{n-8,n-2,n-1\}$. Since $d_2(\ell)=d_3(\ell)=0$ the same holds for $d_i(\ell')$, $i=2,3$. If $\ell'$ were of type $\{n-8,n-2,n-1\}$, we get
\begin{eqnarray*}
 b_2(\mathcal{M}_{\ell'})&=& \begin{pmatrix}n-3 \\ 2 \end{pmatrix}+(n-4)+(n-5)+\ldots +(n-9)
\end{eqnarray*}
by Lemma \ref{bettispecial} (4). An elementary calculation shows that $b_2(\mathcal{M}_\ell)-b_2(\mathcal{M}_{\ell'})=9$, a contradiction. Therefore $\ell'$ is also of type $\{n-4,n-3,n-2\}$.

Assume $\ell$ is of type $\{n-4,n-3,n-1\}$. By checking the first Betti number, we see that $\ell'$ either has the same type as $\ell$, or is of type $\{n-6,n-2,n-1\}$. If $\ell$ were of type $\{n-6,n-2,n-1\}$, an elementary calculation, using Lemma \ref{bettispecial} (2) and (4) shows that $b_2(\mathcal{M}_\ell)-b_2(\mathcal{M}_{\ell'})=2$, a contradiction. So $\ell'$ has the same type as $\ell$.
\end{proof}

\begin{definition}
 A type $T$ of special length vectors is called \em cohomologically rigid\em, if whenever $\ell$, $\ell'$ are of type $T$ with $H^\ast(\mathcal{M}_\ell;\Z)\cong H^\ast(\mathcal{M}_{\ell'};\Z)$, we have $\ell$ and $\ell'$ are in the same chamber up to permutation.
\end{definition}

By Corollary \ref{sametype}, Walker's Conjecture is reduced to proving that all types of special length vectors are cohomologically rigid. We begin with a rather simple case.

\begin{lemma}\label{onelowtype}
 There exists exactly one chamber of type $\{n-4,n-3,n-2\}$ up to permutation.
\end{lemma}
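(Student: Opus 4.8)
The plan is to determine the collection $\dot{\mathcal{S}}(\ell)$ completely and to observe that it does not depend on the particular $\ell$. Since the chamber of a generic ordered length vector is determined by $\dot{\mathcal{S}}(\ell)$, this puts all ordered special length vectors of type $\{n-4,n-3,n-2\}$ into a single chamber, and it then only remains to exhibit one such vector. So let $\ell$ be an ordered special length vector of type $\{n-4,n-3,n-2\}$ and put $M=\{1,2,\ldots,n-5,n-1\}\subseteq\{1,\ldots,n-1\}$. By definition of the type, the maximal element of the (non-empty, totally ordered) poset $\dot{\mathcal{S}}_{n-4}(\ell)$ is the complement of $\{n-4,n-3,n-2\}$, that is $M$; in particular $M\in\dot{\mathcal{S}}_{n-4}(\ell)$. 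I claim
\[
 \dot{\mathcal{S}}(\ell)=\{\,I\subseteq\{1,\ldots,n-1\}\mid I\le M\,\},
\]
a set depending only on $n$. The inclusion ``$\supseteq$'' is immediate, since $\dot{\mathcal{S}}(\ell)$ is downward closed for $\le$ and contains $M$.

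The inclusion ``$\subseteq$'' is the only real content. Let $I\in\dot{\mathcal{S}}(\ell)$; since $\ell$ is special we have $\dot{\mathcal{S}}_{n-3}(\ell)=\emptyset$, so $|I|\le n-4$. Suppose $I=\{i_1<\cdots<i_k\}$ is not $\le M$. Any set of size at most $1$ is $\le M$ (as $n-1\in M$), so $k\ge 2$. I would first check that $i_{k-1}\ge n-4$: otherwise $i_1<\cdots<i_{k-1}\le n-5$, and sending $i_r\mapsto i_r$ for $r\le k-1$ and $i_k\mapsto n-1$ defines an order-preserving injection $\varphi\colon I\to M$ with $x\le\varphi(x)$, contradicting $I\not\le M$. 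Since $i_{k-1}<i_k\le n-1$, this gives $i_{k-1}\in\{n-4,n-3,n-2\}$ and $i_k\ge n-3$, so the three-element set $\{i_{k-1},i_k,n\}$ dominates $\{n-4,n-3,n-2\}$ in the partial order on subsets of $\{1,\ldots,n\}$. Because $\ell$ is ordered, a subset dominating a long subset is itself long (the analogue for long subsets of the fact that subsets of a short subset are short), and $\{n-4,n-3,n-2\}$ is long, being the type of $\ell$. Hence $\{i_{k-1},i_k,n\}$ is long, and therefore so is its superset $I\cup\{n\}$, contradicting $I\in\dot{\mathcal{S}}(\ell)$. This proves the claim.

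Consequently $\dot{\mathcal{S}}(\ell)$ depends only on $n$, so any two ordered special length vectors of type $\{n-4,n-3,n-2\}$ lie in the same chamber. Finally, this type does occur: one checks directly, by listing the short subsets, that $\ell=(\underbrace{1,\ldots,1}_{n-5},n,n,n,n,n+1)$ is generic, special, and of type $\{n-4,n-3,n-2\}$. Hence there is exactly one chamber of this type up to permutation. The one step calling for a genuine, if short, argument is the claim that every $I\in\dot{\mathcal{S}}(\ell)$ satisfies $I\le M$ — essentially the observation that an element of $\dot{\mathcal{S}}(\ell)$ cannot carry two large indices; the rest is bookkeeping with the partial order and with the definitions of short and long.
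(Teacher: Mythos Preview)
Your proof is correct and follows essentially the same approach as the paper's: both show that every $I\in\dot{\mathcal{S}}(\ell)$ satisfies $I\le M=\{1,\ldots,n-5,n-1\}$ by arguing that $I$ cannot contain two elements $\ge n-4$ (else $\{n-4,n-3,n-2\}\le\{i_{k-1},i_k,n\}$ would force $I\cup\{n\}$ long). The paper phrases this as a case split on $\max J$ rather than as a contradiction, but the content is identical; you additionally supply an explicit existence witness, which the paper only provides later in the proof of Corollary~\ref{thelowtype}.
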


\begin{proof}
 Let $J\in\dot{\mathcal{S}}(\ell)$ and let $m=\max J$. If $m\leq n-4$, then clearly $J\cup \{n\} \leq \{1,\ldots,n-5,n-1,n\}$, so $J$ is determined by the type. If $m > n-4$, let $\tilde{m}=\max J-\{m\}$. It is not possible that $\tilde{m}\geq n-4$, for then $\{n-4,n-3,n-2\}\leq \{\tilde{m},m,n\}$ would be short. But then $J\cup\{n\}\leq \{1,\ldots,n-5,n-1,n\}$, so $J$ is determined by the type.
\end{proof}

So in particular, type $\{n-4,n-3,n-2\}$ is cohomologically rigid.

\begin{theorem}\label{rigidn-3}
 Type $\{n-3,n-2,n-1\}$ is cohomologically rigid.
\end{theorem}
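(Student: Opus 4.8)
The plan is to identify the subring $R_\ell\subseteq H^\ast(\mathcal M_\ell;\Z)$ generated by the classes of degree $0$ and $1$, to exhibit it as the exterior face ring of a simplicial complex built explicitly from $\dot{\mathcal S}(\ell)$, and then to recover the chamber from the isomorphism type of that complex using Gubeladze's Theorem~\ref{isoprobsol}. Throughout, $\ell,\ell'$ are ordered and special of type $\{n-3,n-2,n-1\}$, and I may assume $n\ge 6$ since for $n\le 5$ the Betti numbers already determine the chamber. By Lemma~\ref{totalorder} and the definition of the type, $\dot{\mathcal S}_{n-4}(\ell)=\{\{1,\dots,n-4\}\}$, so the full simplex $\Sigma$ on the vertices $1,\dots,n-4$ lies in $\dot{\mathcal S}(\ell)$. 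Since $\{n-3,n-2,n-1\}$ is long and $\ell_n\ge\ell_w$ for $w\le n-1$, a short computation shows that every triple $\{v,v',n\}$ with $v,v'\in\{n-3,n-2,n-1\}$ is long, so no face of $\dot{\mathcal S}(\ell)$ contains two of $n-3,n-2,n-1$. Hence every face of $\dot{\mathcal S}(\ell)$ is a face of $\Sigma$ or has the form $F\cup\{v\}$ with $F$ a face of $\Sigma$ and $v$ an \emph{extra vertex}, i.e.\ $\{v,n\}$ short; and $\dot{\mathcal S}_{n-3}(\ell)=\emptyset$ forces the link $L_v$ of each extra vertex $v$ to be a \emph{proper} subcomplex of $\Sigma$, whose vertex set is an initial segment $\{1,\dots,m_v\}$ since $\dot{\mathcal S}(\ell)$ is downward closed under $\le$. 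Thus $\dot{\mathcal S}(\ell)=\Sigma\cup\bigcup_v(v\ast L_v)$ over the extra vertices; their number is $d_1(\ell)$, and by Lemmas~\ref{annihi} and~\ref{bettispecial} both $n$ and $d_1(\ell)$ are invariants of the graded ring $H^\ast(\mathcal M_\ell;\Z)$.

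Next I would compute $R_\ell$. Since $\dot{\mathcal S}_{n-3}(\ell)=\emptyset$ we get $b_0(\mathcal M_\ell)=a_0+a_{n-3}=1$ by Theorem~\ref{bettinumbers}, so $\mathcal M_\ell$ is connected, and $H^\ast(\mathcal M_\ell;\Z)$ is torsion free, so $x^2=0$ for all $x\in H^1$. Let $Z=Y_{\{1,\dots,n-4\}}\in C^1_\ell$ be the class of (the integral version of) Proposition~\ref{basis} with $X_{\{1,\dots,n-4\}}Z=Y_\emptyset$; then $H^1(\mathcal M_\ell;\Z)=B^1_\ell\oplus\Z Z$, with $B^1_\ell$ spanned by the $X_i$ ($i\le n-4$) and the $X_v$. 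By Proposition~\ref{basis}, $ZX_v=0$ for each extra $v$, while $ZX_\tau=\pm Y_{\{1,\dots,n-4\}\setminus\tau}$ for $\tau\subseteq\{1,\dots,n-4\}$; combined with the face-ring structure of $B^\ast_\ell$ (Proposition~\ref{thefacering}), this shows that any product of degree-one classes is, up to sign, one of the basis elements $X_I$ ($I\in\dot{\mathcal S}(\ell)$) or $ZX_\tau$ ($\tau\subseteq\{1,\dots,n-4\}$) of Proposition~\ref{basis}, and that these form a $\Z$-basis of $R_\ell$. Consequently the tautological map $\Lambda_\Z[\Delta_\ell]\to R_\ell$ is a graded ring isomorphism, where $\Delta_\ell:=\dot{\mathcal S}(\ell)\cup(z\ast\Sigma)$ is obtained from $\dot{\mathcal S}(\ell)$ by coning $\Sigma$ off with one new vertex $z$, sent to $Z$: the map kills precisely the non-face monomials, sends $X_i^2$ and $z^2$ to $0$, and matches the two $\Z$-bases. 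No Morse-theoretic ``no further relations'' step is needed here, as Propositions~\ref{thefacering} and~\ref{basis} already pin down a full $\Z$-basis of $R_\ell$; this is, however, the point to verify with care.

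Finally, a graded ring isomorphism $H^\ast(\mathcal M_\ell;\Z)\cong H^\ast(\mathcal M_{\ell'};\Z)$ restricts to $R_\ell\cong R_{\ell'}$, so $\Delta_\ell\cong\Delta_{\ell'}$ by Theorem~\ref{isoprobsol}, and $n,d_1$ agree. I recover $\dot{\mathcal S}(\ell)$ from $\Delta_\ell$ up to isomorphism as follows: call a vertex a \emph{simplex point} if its link is a full $(n-5)$-simplex. One checks the simplex points of $\Delta_\ell$ are exactly $z$ together with those $X_j$, $j\le n-4$, lying in no edge with an extra vertex, and that deleting \emph{any} simplex point from $\Delta_\ell$ gives a complex isomorphic to $\dot{\mathcal S}(\ell)$: inside $\dot{\mathcal S}(\ell)$ such an $X_j$ plays exactly the role that $z$ plays in $\Delta_\ell$ once $z$ is removed (using that $L_v\subseteq\Sigma$ avoids the vertex $j$). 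Hence $\dot{\mathcal S}(\ell)\cong\dot{\mathcal S}(\ell')$ as simplicial complexes; and since the canonical order on the vertices of $\Sigma$ is reconstructed from the down-set structure of the $L_v$, this isomorphism is realized by a permutation, so $\ell,\ell'$ lie in the same chamber up to permutation.

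The main obstacle I anticipate is twofold: the combinatorial lemma of the first paragraph — that a face meeting $\{n-3,n-2,n-1\}$ meets it in a single vertex and that each $L_v$ is a proper subcomplex of $\Sigma$ — which is where the order-theoretic behavior of short and long subsets, together with the hypothesis $\dot{\mathcal S}_{n-3}(\ell)=\emptyset$, is essential; and the identification of the simplex points of $\Delta_\ell$ with the verification that deleting any of them recovers $\dot{\mathcal S}(\ell)$, which is what allows Gubeladze's theorem to be applied even though the cone vertex $z$ is not intrinsically distinguished in $\Delta_\ell$.
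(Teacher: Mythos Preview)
Your argument is essentially the paper's: both identify $H^\ast_{(1)}$ as the exterior face ring of $\Delta_\ell=\tilde{\mathcal S}(\ell)$ with the simplex $\Sigma$ coned off by one new vertex, apply Gubeladze's theorem, and then observe that deleting any vertex whose link is exactly $\Sigma$ recovers $\tilde{\mathcal S}(\ell)$ up to isomorphism (your ``simplex point'' formulation is precisely the paper's modified permutation $\sigma'$, and the final passage to chambers is \cite[Lm.~3]{fahasc}). The only wrinkle is that Proposition~\ref{basis} is proved over $\Q$, so either work over $\Q$ as the paper does, or justify separately the integral splitting $H^1=B^1_\ell\oplus\Z Z$ and the identity $ZX_\tau=\pm Y_{\{1,\dots,n-4\}\setminus\tau}$ that you rely on.
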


\begin{proof}
 For any $\ell$ let $H^\ast_{(1)}(\mathcal{M}_\ell;\Q)$ be the subalgebra of $H^\ast(\mathcal{M}_\ell;\Q)$ generated by $H^0(\mathcal{M}_\ell;\Q)$ and $H^1(\mathcal{M}_\ell;\Q)$. If $\ell$ and $\ell'$ are ordered special length vectors of type $\{n-3,n-2,n-1\}$ with $H^\ast(\mathcal{M}_\ell)\cong H^\ast(\mathcal{M}_{\ell'})$, then $H^\ast_{(1)}(\mathcal{M}_\ell;\Q)\cong H^\ast_{(1)}(\mathcal{M}_{\ell'};\Q)$.

Note that $H^1(\mathcal{M}_\ell;\Q)$ is generated by $X_i$ with $i\in \{1,\ldots,n-1\}$ such that $\{i,n\}$ is short with respect to $\ell$, and one extra generator $Y$. It follows from Proposition \ref{basis} that $H^\ast_{(1)}(\mathcal{M}_\ell;\Q)$ is an exterior face ring with respect to the following simplicial complex: let $\Delta_\ell$ be $\tilde{\mathcal{S}}(\ell)$ together with one extra vertex corresponding to $Y$ which cones off $X_1,\ldots, X_{n-4}$.

Denote the generators of $H^\ast_{(1)}(\mathcal{M}_{\ell'};\Q)$ by $X_i'$ and $Y'$.

We then have $\Lambda_\Q[\Delta_\ell]\cong \Lambda_\Q[\Delta_{\ell'}]$. By Theorem \ref{isoprobsol}, we get an isomorphism $\Delta_\ell\cong \Delta_{\ell'}$ of simplicial complexes, that is, we have a bijection $\sigma:\{X_1,\ldots,X_k,Y\} \to \{X_1',\ldots,X_k',Y'\}$ preserving simplices. Here $k$ is the largest number $\leq n-1$  such that $\{k,n\}$ is short with respect to $\ell$ (and hence also with respect to $\ell'$). Note that $Y$ cones off the simplex $X_1,\ldots, X_{n-4}$, and if $X_i'$ with $i\leq n-4$ has the property that it is adjacent to $X_j'$ if and only if $j\leq n-4$, it is conceivable that $\sigma(Y)=X_i'$. But then there has to be an $X_l$ with $\sigma(X_l)=Y'$, and $X_l$ is adjacent exactly to all $X_j$ with $j\leq n-4$. Define $\sigma':\{X_1,\ldots,X_k,Y\}\to \{X_1',\ldots, X_k',Y\}$ by $\sigma'(X_s)=\sigma(X_s)$ for $s\not= l$, $\sigma'(X_l)=X_i'$ and $\sigma'(Y)=Y'$. Then $\sigma'$ induces an isomorphism $\tilde{\mathcal{S}}(\ell)\cong \tilde{\mathcal{S}}(\ell')$, which by \cite[Lm.3]{fahasc} implies $\dot{\mathcal{S}}(\ell)=\dot{\mathcal{S}}(\ell')$. Therefore $\ell$ and $\ell'$ are in the same chamber.
\end{proof}

\begin{proposition}\label{lamerigid}
 Type $\{1,n-2,n-1\}$ is cohomologically rigid.
\end{proposition}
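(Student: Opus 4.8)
The plan is to run the argument of Theorem \ref{rigidn-3} one step deeper. Since any graded ring isomorphism preserves the subalgebra $H^\ast_{(1)}(\mathcal{M}_\ell;\Q)$ generated by $H^0\oplus H^1$, it suffices to show that this subalgebra determines the chamber of an ordered special length vector $\ell$ of type $\{1,n-2,n-1\}$. First I would record the combinatorics forced by the type: here $\dot{\mathcal{S}}_{n-4}(\ell)$ consists exactly of the sets $A_m=\{1,\dots,n-3\}\setminus\{m\}$, $m=1,\dots,n-3$, so $\dot{\mathcal{S}}(\ell)$ automatically contains every subset of $\{1,\dots,n-3\}$ of size $\le n-4$, and the only remaining freedom in the chamber is the down-set of those $J\in\dot{\mathcal{S}}(\ell)$ meeting $\{n-2,n-1\}$. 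In particular, if neither $\{n-2,n\}$ nor $\{n-1,n\}$ is short (equivalently $A^1_{n-5}(\ell)=0$), then $\dot{\mathcal{S}}(\ell)$ is completely determined by the type, so there is a unique such chamber and there is nothing to prove; so I would reduce to the case $A^1_{n-5}(\ell)\neq 0$.

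Next I would describe the ring $H^\ast_{(1)}(\mathcal{M}_\ell;\Q)$ explicitly. By Proposition \ref{basis} it is generated by the classes $X_i\in H^1$ ($i\in\dot{\mathcal{S}}_1(\ell)$) and $Y_m:=Y_{A_m}\in C^1$ ($m=1,\dots,n-3$); the $X_i$ multiply as in $\Lambda_\Q[\tilde{\mathcal{S}}(\ell)]$ and $X_iY_m=\pm Y_{A_m\setminus\{i\}}$ (or $0$ if $i\notin A_m$), so the only products left to pin down are the $Y_mY_{m'}$. The key computation is that $Y_mY_{m'}=\pm X_mX_{m'}$ for $m\neq m'$. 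Because $\tau^\ast$ is $(-1)^{|x|}$ on $B^\ast_\ell$ and $(-1)^{|x|+1}$ on $C^\ast_\ell$, any product in $C^i\cdot C^j$ lands in $B^{i+j}$, and $C^i\cdot C^{n-3-i}=0$ (wrong $\tau$-eigenvalue in $H^{n-3}$); hence $Y_mY_{m'}\in B^2$. Pairing both $Y_mY_{m'}$ and $X_mX_{m'}=X_{\{m,m'\}}$ by Poincar\'e duality against the basis $\{X_I:I\in\dot{\mathcal{S}}_{n-5}(\ell)\}\cup\{Y_K:K\in\dot{\mathcal{S}}_2(\ell)\}$ of $H^{n-5}(\mathcal{M}_\ell;\Q)$, one finds (using the formula for $X_iY_m$ and $C^1\cdot C^{n-4}=0$) that both annihilate every $X_I$ and every $Y_K$ with $K\neq\{m,m'\}$, while each pairs nontrivially with $Y_{\{m,m'\}}$; non-degeneracy then forces the identity. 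Establishing the nonvanishing of the relevant triple product (equivalently, of $Y_mY_{m'}$ itself) and keeping the signs globally consistent is the delicate part, and is where the Morse theory set up later in the paper enters.

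It follows that $H^\ast_{(1)}(\mathcal{M}_\ell;\Q)$ is the quotient of the exterior algebra on the $X_i$ and $Y_m$ by an ideal generated by monomials (the face relations of $\tilde{\mathcal{S}}(\ell)$ together with the vanishing products $X_iY_m$) and the binomials $Y_mY_{m'}-\epsilon_{m,m'}X_mX_{m'}$; in particular it is not itself an exterior face ring, since all the top-degree products $X_{A_m}Y_m$ coincide. To get at Theorem \ref{isoprobsol} I would split off the recoverable combinatorial data: the subspace $A^1_{n-5}(\ell)=\langle X_{n-2},X_{n-1}\rangle$ of $H^1$ is intrinsic (Lemma \ref{annihi}), and for a vector $\xi$ in it the behaviour of multiplication by $\xi$ on $H^\ast_{(1)}$ recovers exactly the down-set of sets $\{n-2\}\cup J_0$ (resp.\ $\{n-1\}\cup J_0$) lying in $\dot{\mathcal{S}}(\ell)$, since $\xi$ annihilates all the $Y_m$ and multiplies an $X$-monomial $X_{J_0}$ to $X_{\{n-2\}\cup J_0}$ precisely when this set is short. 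Combining this with the type part recovers $\dot{\mathcal{S}}(\ell)$ (once one has also extracted, from the products $X_{n-2}X_{n-1}$, the sets containing both of $n-2,n-1$); alternatively, after a suitable change of generators absorbing the binomials, one may recognise an exterior face ring on a complex $\Delta_\ell$ built from $\dot{\mathcal{S}}(\ell)$ and apply Theorem \ref{isoprobsol} directly. Either way, a graded isomorphism $H^\ast(\mathcal{M}_\ell)\cong H^\ast(\mathcal{M}_{\ell'})$ forces an isomorphism $\tilde{\mathcal{S}}(\ell)\cong\tilde{\mathcal{S}}(\ell')$ of simplicial complexes (possibly after the harmless permutation $n-2\leftrightarrow n-1$), and by \cite[Lm.3]{fahasc} this gives $\dot{\mathcal{S}}(\ell)=\dot{\mathcal{S}}(\ell')$ up to permutation, i.e.\ $\ell$ and $\ell'$ are in the same chamber up to permutation.

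The two steps I expect to be the real work are: the signed computation of the products $Y_mY_{m'}$, together with the accompanying nonvanishing statement, which needs the Morse-theoretic description of $\mathcal{M}_\ell$; and the combinatorial bookkeeping that separates, inside the isomorphism, the $X$-generators from the $Y$-generators and from $X_{n-2},X_{n-1}$, so that the chamber can actually be read off, exactly as in the corresponding step of the proof of Theorem \ref{rigidn-3}.
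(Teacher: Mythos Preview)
Your approach would probably work, but it is vastly more elaborate than what the paper actually does. The paper's proof is a three-line counting argument that uses nothing beyond the first Betti number.

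The observation you missed is that the type $\{1,n-2,n-1\}$ constraint is extremely restrictive. Since $\{1,n-2,n-1\}$ is long and $\ell$ is ordered, replacing $n-1$ by $n$ (or $n-2$ by $n$) and $1$ by any $j\geq 1$ only increases the long side, so $\{j,n-2,n\}$ and $\{j,n-1,n\}$ are long for every $j$. Hence no element of $\dot{\mathcal{S}}(\ell)$ of size $\geq 2$ meets $\{n-2,n-1\}$, and the only freedom left is whether the singletons $\{n-2\}$ and $\{n-1\}$ lie in $\dot{\mathcal{S}}_1(\ell)$. Because $\{n-1,n\}$ short forces $\{n-2,n\}$ short, there are exactly three chambers of this type, corresponding to $d_1(\ell)=\dim A^1_{n-5}(\ell)\in\{0,1,2\}$. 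By Lemma~\ref{bettispecial}(4) their first Betti numbers are $2n-6$, $2n-5$, $2n-4$, and that already separates them. No description of $H^\ast_{(1)}$, no $Y_mY_{m'}$ computation, no Gubeladze, no Morse theory.

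All of your ring-theoretic machinery---the signed product $Y_mY_{m'}=\pm X_mX_{m'}$, the face-ring reduction, Theorem~\ref{isoprobsol}---is in the spirit of what the paper does for the harder types $\{i,n-2,n-1\}$ with $i\geq 2$ in Section~\ref{sectypen-4}, where there really are many chambers to separate. The explicit presentation of $H^\ast_{(1)}(\mathcal{M}_\ell)$ for type $\{1,n-2,n-1\}$ does appear later (Proposition~\ref{cohlametype}), but only as the base case of the induction on $i$, not to establish rigidity of this type.
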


\begin{proof}
 It is easy to see that there is only one chamber with $A^1_{n-5}=0$. More generally, we can have $A^1_{n-5}$ zero-, one- or two-dimensional, depending on whether $\{n,n-1\}$ and $\{n,n-2\}$ are short or long. But since $\{1,n-2,n-1\}$ is long, we get $\{n,n-2,1\}$ long, so there exist only three chambers up to permutation of special length vectors of type $\{1,n-2,n-1\}$. Furthermore, these three chambers are distinguished by the first Betti number, see Lemma \ref{bettispecial}.
\end{proof}

A similar argument can be used to show that type $\{2,n-2,n-1\}$ is cohomologically rigid, but the arguments using the annihilators $A^i_k$ do not work for type $\{i,n-2,n-1\}$ with larger values of $i$. We therefore need different techniques, in particular we need to get a better picture of the algebra generated by $H^0(\mathcal{M}_\ell;\Z)$ and $H^1(\mathcal{M}_\ell;\Z)$ which we denote by
\[
 H^\ast_{(1)}(\mathcal{M}_\ell;\Z).
\]
Since the homology of $\mathcal{M}_\ell$ is free abelian, we necessarily get that
\begin{eqnarray*}
 H^\ast_{(1)}(\mathcal{M}_\ell;\Z)& \cong & \Lambda_\Z[A_1,\ldots,A_k]/I_\ell
\end{eqnarray*}
where $k$ is the first Betti number and $I_\ell$ is some ideal of $\Lambda_\Z[A_1,\ldots,A_k]$ whose elements have at least degree 2. We will see below that $H^\ast_{(1)}(\mathcal{M}_\ell;\Z)$ need not be a face ring in general, but $I_\ell$ is still well behaved enough to recover $\dot{\mathcal{S}}(\ell)$.

\section{The diffeomorphism type of certain planar polygon spaces}\label{diffeosec}

Let $\ell=(\ell_1,\ldots,\ell_n)$ be a generic ordered length vector. Given $\varepsilon>0$, we can form a new length vector $\ell'=(\varepsilon,\ell_1,\ldots,\ell_n)$. If $\varepsilon>0$ is sufficiently small, a set $J\subset \{1,\ldots,n+1\}$ is small with respect to $\ell'$ if and only if $s(J-\{1\})$ is small with respect to $\ell$, where $s:\{2,\ldots,n+1\}\to \{1,\ldots,n\}$ is given by $s(i)=i-1$.

\begin{proposition}\label{smallfirst}
 In the above situation, $\mathcal{M}_{\ell'}\cong \mathcal{M}_\ell\times S^1$, and the $S^1$-factor corresponds to rotating the first coordinate.
\end{proposition}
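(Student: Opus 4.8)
The plan is to use the $SO(2)$-action to put a configuration into a normal form, and then realise $\mathcal{M}_{\ell'}$ as the total space of a trivial circle bundle over $\mathcal{M}_\ell$ coming from a fibration over a circle of regular values. Write a point of $\mathcal{M}_{\ell'}$ as $[(z_1,\dots,z_{n+1})]$ with $\varepsilon z_1+\sum_{i=1}^n\ell_iz_{i+1}=0$. As in the normalisation at the start of this section, the torus slice $\{z_2=1\}$ meets every $SO(2)$-orbit exactly once, so $\mathcal{M}_{\ell'}$ is diffeomorphic to the set of $(z_1,z_3,\dots,z_{n+1})\in T^n$ satisfying $\varepsilon z_1=-G(z_3,\dots,z_{n+1})$, where $G\colon T^{n-1}\to\C$ is the smooth map $G(w)=\ell_1+\sum_{j=2}^n\ell_jw_j$. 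Since $z_1$ is then forced to equal $-G/\varepsilon$ and to lie in $S^1$, this identifies $\mathcal{M}_{\ell'}$ with $G^{-1}(S^1_\varepsilon)$, the preimage under $G$ of the circle of radius $\varepsilon$ about $0\in\C$; under this identification the first coordinate $z_1$ is, up to the diffeomorphism $u\mapsto-u/\varepsilon$ from $S^1_\varepsilon$ to $S^1$, precisely the projection onto $S^1_\varepsilon$.

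The essential point, and the only place where genericity of $\ell$ is used, is that $0$ is a regular value of $G$. Indeed, if $dG_w$ were not onto $\C$ at some $w\in G^{-1}(0)$, then $\ell_2w_2,\dots,\ell_nw_n$ would all lie on a single line through the origin; since they sum to the nonzero real number $-\ell_1$, that line is the real axis, so each $w_j=\pm1$. Writing $\eta_j=w_j\in\{\pm1\}$ and setting $\eta_1=1$, the relation $G(w)=0$ becomes $\sum_{j=1}^n\eta_j\ell_j=0$, i.e.\ $\ell\in H_J$ for $J=\{j:\eta_j=1\}$, contradicting genericity. Now $G$ is proper ($T^{n-1}$ being compact) and its critical set is closed, so its set of critical values is closed and omits $0$; hence there is $\delta>0$ such that every value in the open disc $D_\delta$ about $0$ is regular. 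Then $G\colon G^{-1}(D_\delta)\to D_\delta$ is a proper submersion, hence a locally trivial fibre bundle by Ehresmann's theorem, and trivial since $D_\delta$ is contractible: $G^{-1}(D_\delta)\cong D_\delta\times G^{-1}(0)$ over $D_\delta$. Restricting to $S^1_\varepsilon\subset D_\delta$ for any $0<\varepsilon<\delta$ gives $\mathcal{M}_{\ell'}\cong G^{-1}(S^1_\varepsilon)\cong S^1_\varepsilon\times G^{-1}(0)$, where the projection to the $S^1_\varepsilon$-factor is the restriction of $G$, hence (up to the reparametrisation above) the first coordinate $z_1$.

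It remains to identify $G^{-1}(0)=\{w\in T^{n-1}:\sum_{j=2}^n\ell_jw_j=-\ell_1\}$ with $\mathcal{M}_\ell$: the substitution $w_j\mapsto-w_j$ carries it to $\{w\in T^{n-1}:\sum_{j=2}^n\ell_jw_j=\ell_1\}$, which is the model of $\mathcal{M}_{(\ell_2,\dots,\ell_n,\ell_1)}$ obtained by rotating the last coordinate to $1$, and this is diffeomorphic to $\mathcal{M}_\ell$ by the permutation invariance recalled in the introduction. Combining the diffeomorphisms yields $\mathcal{M}_{\ell'}\cong\mathcal{M}_\ell\times S^1$, with the $S^1$-factor corresponding to rotating the first coordinate. (If $\mathcal{M}_\ell=\emptyset$, then $G^{-1}(0)=\emptyset$, so $G^{-1}(D_\delta)=\emptyset$ for small $\delta$ by properness and $\mathcal{M}_{\ell'}=\emptyset$ as well, and the statement holds trivially.) The one mildly delicate step is the passage from ``$0$ is a regular value'' to ``a whole disc of regular values'', which relies on the compactness/properness argument above rather than on Sard's theorem alone; everything else is a routine application of Ehresmann's theorem plus the bookkeeping that pins the circle factor to the first coordinate.
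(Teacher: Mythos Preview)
Your proof is correct and follows essentially the same approach as the paper: identify $\mathcal{M}_{\ell'}$ with the preimage of a small circle under a map $T^{n-1}\to\C$ whose preimage of $0$ is $\mathcal{M}_\ell$, verify that $0$ is a regular value via genericity, and conclude local triviality near the zero fibre. The paper phrases the final step as triviality of the normal bundle of $\mathcal{M}_\ell$ in $T^{n-1}$ rather than via Ehresmann's theorem, and normalises the last coordinate rather than $z_2$, but these are cosmetic differences; your version is simply more explicit about the regularity check and the bookkeeping for the $S^1$-factor.
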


\begin{proof}
 Let $f:T^{n-1}\to \C$ be given by
\begin{eqnarray*}
f(z_1,\ldots,z_{n-1})&=&\sum_{i=1}^{n-1}\ell_i z_i+\ell_n.
\end{eqnarray*}
Then $\mathcal{M}_\ell=f^{-1}(\{0\})$, and it is easy to see that 0 is a regular value of $f$. So the normal bundle of $\mathcal{M}_\ell$ in $T^{n-1}$ is trivial, and we get $\mathcal{M}_{\ell'}$ is diffeomorphic to $f^{-1}(S_\varepsilon)$, where $S_\varepsilon\subset \C$ is the circle around 0 of radius $\varepsilon$. So for $\varepsilon>0$ small enough, we get $\mathcal{M}_{\ell'}\cong \mathcal{M}_\ell\times S^1$.
\end{proof}

\begin{corollary}\label{thelowtype}
 If $\ell$ is special of type $\{n-4,n-3,n-2\}$ with $n\geq 5$, then $\mathcal{M}_\ell\cong M_4\times T^{n-5}$, where $M_4$ is the closed orientable surface of genus 4.
\end{corollary}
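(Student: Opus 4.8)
The plan is to combine the uniqueness statement of Lemma \ref{onelowtype} with the product formula of Proposition \ref{smallfirst}, thereby reducing the whole statement to the pentagon case $n=5$. Since Lemma \ref{onelowtype} tells us that there is only one chamber of type $\{n-4,n-3,n-2\}$ up to permutation, it suffices to exhibit a single convenient representative of that chamber and compute its polygon space; any other $\ell$ of this type then lies in the same chamber up to permutation and so yields an $O(1)$-diffeomorphic manifold by \cite{hausma}.

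First I would take the explicit length vector
\[
 \ell^{(n)}=(\varepsilon_{n-5},\ldots,\varepsilon_1,1,1,1,1,1+\delta),
\]
with $0<\varepsilon_{n-5}<\cdots<\varepsilon_1$ very small and $0<\delta<\tfrac12$ chosen generically. A direct inspection of the definitions shows that $\ell^{(n)}$ is generic and ordered; that $\dot{\mathcal{S}}_0(\ell^{(n)})=\{\emptyset\}$ and $\dot{\mathcal{S}}_{n-3}(\ell^{(n)})=\emptyset$ (so $\mathcal{M}_{\ell^{(n)}}$ is connected); that the only $3$-element long subsets of $\{1,\ldots,n-1\}$ are the $3$-subsets of $\{n-4,n-3,n-2,n-1\}$, whence the minimal one is $\{n-4,n-3,n-2\}$; and that $\dot{\mathcal{S}}_{n-4}(\ell^{(n)})\neq\emptyset$, for instance $\{1,\ldots,n-5,n-1\}\in\dot{\mathcal{S}}_{n-4}(\ell^{(n)})$. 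Hence $\ell^{(n)}$ is special of type $\{n-4,n-3,n-2\}$, and it remains only to prove $\mathcal{M}_{\ell^{(n)}}\cong M_4\times T^{n-5}$.

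For $n=5$ we have $\ell^{(5)}=(1,1,1,1,1+\delta)$; then $\mathcal{M}_{\ell^{(5)}}$ is a closed surface, it is orientable (this is classical for pentagon spaces, see e.g. \cite{kapmil}), and by Theorem \ref{bettinumbers} its first Betti number equals $2\,|\dot{\mathcal{S}}_1(\ell^{(5)})|=8$; therefore $\mathcal{M}_{\ell^{(5)}}\cong M_4$. For $n>5$, observe that $\ell^{(k+1)}$ is obtained from $\ell^{(k)}$ by prepending the coordinate $\varepsilon_{k-4}$, which — provided it was chosen small enough relative to $\ell^{(k)}$ — satisfies the hypothesis of Proposition \ref{smallfirst}, so that $\mathcal{M}_{\ell^{(k+1)}}\cong\mathcal{M}_{\ell^{(k)}}\times S^1$. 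Iterating this from $k=5$ to $k=n-1$ gives $\mathcal{M}_{\ell^{(n)}}\cong\mathcal{M}_{\ell^{(5)}}\times T^{n-5}\cong M_4\times T^{n-5}$.

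The bulk of the argument is therefore bookkeeping: verifying the shortness/longness combinatorics for $\ell^{(n)}$, and choosing $\varepsilon_1\gg\varepsilon_2\gg\cdots\gg\varepsilon_{n-5}$ compatibly with the finitely many smallness thresholds arising from the successive applications of Proposition \ref{smallfirst}. The one genuinely external ingredient is the orientability of pentagon spaces, which is what pins the base case down to the genus-$4$ surface rather than to a non-orientable surface with the same Betti numbers; I expect this (rather than the diffeomorphism-theoretic part) to be the only point that needs a citation, everything else being immediate from the results already in place.
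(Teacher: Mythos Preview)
Your proposal is correct and follows essentially the same route as the paper: exhibit an explicit representative of the unique chamber of type $\{n-4,n-3,n-2\}$ (the paper simply takes $(\varepsilon,\ldots,\varepsilon,1,1,1,1,1)$), invoke Lemma~\ref{onelowtype}, apply Proposition~\ref{smallfirst} inductively, and settle the base case $n=5$ via the Betti numbers. One small remark: orientability of $\mathcal{M}_\ell$ for generic $\ell$ does not really require an external citation, since the proof of Proposition~\ref{smallfirst} already shows that $\mathcal{M}_\ell$ sits in $T^{n-1}$ with trivial normal bundle (hence is orientable), and the paper uses Poincar\'e duality for $\mathcal{M}_\ell$ throughout.
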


\begin{proof}
 We have $\ell=(\varepsilon,\ldots,\varepsilon,1,1,1,1,1)$ is of type $\{n-4,n-3,n-2\}$, provided $\varepsilon>0$ is small enough. As there is only one chamber of type $\{n-4,n-3,n-2\}$ up to permutation by Lemma \ref{onelowtype}, the result follows by induction, the induction start $n=5$ is easily seen using the Betti numbers.
\end{proof}

\begin{corollary}
 If $\mathcal{M}_\ell$ is disconnected, then $\mathcal{M}_\ell\cong S^0\times T^{n-3}$.
\end{corollary}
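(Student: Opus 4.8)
The plan is to reduce to the case where $\mathcal{M}_\ell$ is disconnected precisely because some singleton $\{i\}$ is long, and then to strip away all the \emph{short} bars one at a time using Proposition \ref{smallfirst}. First I would recall from the discussion after Lemma \ref{tauonN} (and from the standard theory, e.g.\ \cite{hausma}) that $\mathcal{M}_\ell$ is disconnected if and only if $\mathcal{M}_\ell$ is in fact a discrete $0$-sphere's worth of components glued to a torus --- concretely, that disconnectedness is equivalent to the existence of a long subset of size $1$. Since $\ell$ is ordered, this forces $\{n\}$ to be long, i.e.\ $\ell_n \geq \ell_1 + \cdots + \ell_{n-1}$ (strict, by genericity), which means $\mathcal{M}_\ell$ consists of the configurations where the bar $n$ is pointing one of two ways and the remaining $n-1$ bars wind around it freely.

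Next I would identify the chamber explicitly. When $\{n\}$ is long and $\ell$ is generic, the short subsets of $\{1,\ldots,n-1\}$ together with $n$ are exactly those $J \subset \{1,\ldots,n-1\}$ with $|J| \leq n-3$ that lie below $\{1,\ldots,n-3,n\}$ in the order $\leq$ --- in other words every proper subset of $\{1,\ldots,n-1\}$ with at most $n-3$ elements; equivalently $\dot{\mathcal{S}}_k(\ell)$ consists of all size-$k$ subsets for $k \le n-3$. This is a single chamber up to permutation, and a representative is $\ell = (\varepsilon,\ldots,\varepsilon,1)$ with $\varepsilon > 0$ small, for which $\mathcal{M}_\ell$ is visibly the set $\{z \in T^{n-1} \mid \sum \varepsilon z_i = 1\}$, two points' worth of... more carefully, the configuration where the long bar balances $n-1$ tiny bars, which for $\varepsilon$ small is $S^0 \times T^{n-2}$. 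To pin down the exponent, iterate Proposition \ref{smallfirst}: the length vector $(\varepsilon,\ldots,\varepsilon,1)$ with $n-1$ copies of $\varepsilon$ is obtained from $(\varepsilon,\ldots,\varepsilon,1)$ with $n-2$ copies of $\varepsilon$ by prepending a small coordinate, so $\mathcal{M}_\ell \cong \mathcal{M}_{(\varepsilon,\ldots,\varepsilon,1)} \times S^1$ with one fewer $\varepsilon$; the base case $n=3$, namely $\ell=(\varepsilon,\varepsilon,1)$, gives the empty set if $2\varepsilon < 1$ --- so instead the honest base case is $n=2$ (or one simply observes that the disconnected polygon space with $n$ bars, one long, deformation retracts to two copies of $T^{n-3}$) and the induction yields $\mathcal{M}_\ell \cong S^0 \times T^{n-3}$.

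The main obstacle --- really the only point requiring care --- is the very first claim: that disconnectedness of $\mathcal{M}_\ell$ forces a long singleton rather than being caused by some larger obstruction. I would argue this via the homology formula of Theorem \ref{bettinumbers}: $\mathcal{M}_\ell$ is disconnected iff $b_0(\mathcal{M}_\ell) = a_0 + a_{n-3} \geq 2$, and since $a_0 = |\dot{\mathcal{S}}_0(\ell)| = 1$ always (the empty set together with $n$ is short precisely when $\mathcal{M}_\ell\ne\emptyset$, which we may assume), this forces $a_{n-3} = |\dot{\mathcal{S}}_{n-3}(\ell)| \geq 1$, hence (by the uniqueness statement cited before the definition of \emph{special}, \cite[Ex.2]{farsch}) $\ell$ lies in the unique chamber with $\dot{\mathcal{S}}_{n-3} \neq \emptyset$. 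In that chamber $\{n\}$ is long, and the identification $\mathcal{M}_\ell \cong S^0 \times T^{n-3}$ follows as above. Once the chamber is identified the rest is the routine iteration of Proposition \ref{smallfirst} already used in Corollary \ref{thelowtype}, so I would keep that part brief.
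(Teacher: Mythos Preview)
Your overall strategy---use Theorem~\ref{bettinumbers} to force $a_{n-3}\geq 1$, invoke the uniqueness of the chamber with $\dot{\mathcal{S}}_{n-3}(\ell)\neq\emptyset$, then iterate Proposition~\ref{smallfirst}---is exactly the paper's approach. But there is a genuine error in identifying the chamber.

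You claim that disconnectedness forces the singleton $\{n\}$ to be long, i.e.\ $\ell_n>\ell_1+\cdots+\ell_{n-1}$. That condition makes $\mathcal{M}_\ell$ \emph{empty}, not disconnected. What $a_{n-3}\geq 1$ actually says is that some $J\subset\{1,\ldots,n-1\}$ with $|J|=n-3$ has $J\cup\{n\}$ short; for ordered $\ell$ this means $\{1,\ldots,n-3,n\}$ is short, equivalently the \emph{pair} $\{n-2,n-1\}$ is long. The correct representative is therefore $(\varepsilon,\ldots,\varepsilon,1,1,1)$ with $n-3$ copies of $\varepsilon$, not $(\varepsilon,\ldots,\varepsilon,1)$. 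You in fact ran into this: your base case $n=3$ with $(\varepsilon,\varepsilon,1)$ came out empty, which should have flagged that the representative was wrong rather than prompting a move to $n=2$. With the right representative the induction starts at $n=3$ with $\ell=(1,1,1)$, where $\mathcal{M}_\ell$ consists of two triangles, i.e.\ $S^0$, and Proposition~\ref{smallfirst} then gives $\mathcal{M}_\ell\cong S^0\times T^{n-3}$ exactly as in Corollary~\ref{thelowtype}. Your description of $\dot{\mathcal{S}}_k(\ell)$ as ``all size-$k$ subsets of $\{1,\ldots,n-1\}$'' is likewise off: in this chamber $\dot{\mathcal{S}}_k(\ell)$ consists of the size-$k$ subsets of $\{1,\ldots,n-3\}$ only.
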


\begin{proof}
 It follows from Theorem \ref{bettinumbers} that we need $\{n-2,n-1\}$ to be long with respect to an ordered length vector for $\mathcal{M}_\ell$ to be disconnected. Then there can only be one chamber up to permutation with this property, and it is easy to see that $(\varepsilon,\ldots,\varepsilon,1,1,1)$ represents this chamber for sufficiently small $\varepsilon$. The result now follows by induction, starting with $n=3$.
\end{proof}

This corollary is well known, a proof can already be found in \cite{walker}. See also \cite{kapmil} and \cite{farsch}.

Now consider a generic ordered length vector $\ell=(\ell_1,\ldots,\ell_n)$ and for $t\in[0,\ell_1)$ let $\ell_t=(\ell_1-t,\ell_2,\ldots,\ell_{n-1},\ell_n+t)$. Then $\ell_t$ is also ordered, and a subset $J\subset\{2,\ldots,n-1\}$ is short with respect to $\ell_t$ for any $t$ if and only if it is short with respect to $\ell$.

Write $T^{n-2}=\{(z_2,\ldots,z_{n-1})\,|\, z_i\in S^1 \}$ and define $f:T^{n-2}\times [0,\ell_1-\varepsilon]\to \R$ by
\begin{eqnarray*}
 f(z_2,\ldots,z_{n-1},t)&=& \left\|\sum_{i=2}^{n-1} \ell_i z_i+\ell_n+t \right\|^2-(\ell_1-t)^2,
\end{eqnarray*}
where $\varepsilon>0$ is so small that $J\subset \{1,\ldots,n\}$ is small with respect to $\ell_{\ell_1-\varepsilon}$ if and only if $s(J-\{1\})$ is small with respect to $\ell'=(\ell_2,\ldots,\ell_{n-1},\ell_n+\ell_1)$.

\begin{lemma}\label{critpointsf}
 If $\ell$ is generic, then $0$ is a regular value of $f$ and of $f|\partial(T^{n-2}\times [0,\ell_1-\varepsilon])$.
\end{lemma}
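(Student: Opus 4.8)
The plan is to compute the differential of $f$ explicitly and observe that, wherever it (or the differential of one of its boundary restrictions) degenerates on $f^{-1}(0)$, all coordinates $z_i$ are forced to be real, so that the equation $f=0$ becomes one of the defining relations $\sum_{i\in J}\ell_i=\sum_{i\notin J}\ell_i$ of a wall $H_J$ --- which genericity excludes. Write $w=w(z,t)=\sum_{i=2}^{n-1}\ell_i z_i+\ell_n+t\in\C$, so that $f=w\bar w-(\ell_1-t)^2$. With $z_j=e^{\mathbf{i}\theta_j}$ one computes
\[
\frac{\partial f}{\partial\theta_j}=-2\ell_j\operatorname{Im}(z_j\bar w)\quad(j=2,\dots,n-1),\qquad \frac{\partial f}{\partial t}=2\bigl(\operatorname{Re}w+\ell_1-t\bigr).
\]

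Now suppose $p=(z_2,\dots,z_{n-1},t)\in f^{-1}(0)$ is a critical point of $f$, or of the restriction of $f$ to one of the two boundary faces $T^{n-2}\times\{0\}$ and $T^{n-2}\times\{\ell_1-\varepsilon\}$. In every case all $\partial f/\partial\theta_j$ vanish at $p$, so $\operatorname{Im}(z_j\bar w)=0$ for $j=2,\dots,n-1$. If $w=0$ then $f(p)=-(\ell_1-t)^2<0$ since $t\le\ell_1-\varepsilon<\ell_1$, contradicting $p\in f^{-1}(0)$; hence $w\ne0$. Then each $z_j$ is a real unit multiple of $w$, say $z_j=\eta_j u$ with $u=w/|w|$ and $\eta_j\in\{\pm1\}$, so $w=\bigl(\sum_{j=2}^{n-1}\eta_j\ell_j\bigr)u+(\ell_n+t)$; comparing with $w=|w|u$ and using $\ell_n+t>0$ forces $u=\pm1$. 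Thus $w$ is real and every $z_j=\pm1$, and $f(p)=0$ gives $w=\pm(\ell_1-t)$, with $w=\sum_{j=2}^{n-1}\eta_j\ell_j+\ell_n+t$.

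I would then split into cases. If $p$ is a critical point of $f$, then also $\operatorname{Re}w+\ell_1-t=0$, hence $w=-(\ell_1-t)$ and therefore $\sum_{j=2}^{n-1}\eta_j\ell_j+\ell_1+\ell_n=0$; moving the terms with $\eta_j=-1$ to the other side gives $\sum_{i\in J}\ell_i=\sum_{i\notin J}\ell_i$ with $J=\{1,n\}\cup\{j:\eta_j=+1\}$, that is $\ell\in H_J$, which is impossible. This proves $0$ is a regular value of $f$. If $p$ lies on the face $T^{n-2}\times\{0\}$, then $t=0$ and $w=\pm\ell_1$, so $\sum_{j=2}^{n-1}\eta_j\ell_j+\ell_n\mp\ell_1=0$, again a relation $\ell\in H_J$. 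If $p$ lies on the face $T^{n-2}\times\{\ell_1-\varepsilon\}$, then $\ell_1-t=\varepsilon$ and $w=\pm\varepsilon$; the sign $-$ reproduces $\sum_{j=2}^{n-1}\eta_j\ell_j+\ell_1+\ell_n=0$, while the sign $+$ yields $-\varepsilon+\sum_{j=2}^{n-1}\eta_j\ell_j+(\ell_n+\ell_1-\varepsilon)=0$, which is precisely a wall relation for $\ell_{\ell_1-\varepsilon}=(\varepsilon,\ell_2,\dots,\ell_{n-1},\ell_n+\ell_1-\varepsilon)$. So this last case needs $\ell_{\ell_1-\varepsilon}$ to be generic, which holds once $\varepsilon$ is chosen as in the setup: the vector $\ell'=(\ell_2,\dots,\ell_{n-1},\ell_n+\ell_1)$ is generic because any wall relation for $\ell'$ pulls back to one for $\ell$ in which $\ell_1$ and $\ell_n$ occur with the same sign; hence $(0,\ell_2,\dots,\ell_{n-1},\ell_n+\ell_1)$ lies off every $H_J$, and so does the nearby vector $\ell_{\ell_1-\varepsilon}$ for $\varepsilon>0$ small. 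This shows $0$ is a regular value of $f|\partial(T^{n-2}\times[0,\ell_1-\varepsilon])$ as well.

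The only delicate point is the boundary face $t=\ell_1-\varepsilon$: there the contradiction comes from genericity of the perturbed vector $\ell_{\ell_1-\varepsilon}$ rather than of $\ell$ itself, so one must check that the choice of $\varepsilon$ fixed in the construction (equivalently, genericity of $\ell'$) indeed makes $\ell_{\ell_1-\varepsilon}$ generic. Everything else is the routine gradient computation together with the elementary observation that $\operatorname{Im}(z_j\bar w)=0$ for all $j$ forces all $z_j=\pm1$ with $w$ real.
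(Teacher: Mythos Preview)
Your proof is correct and follows essentially the same strategy as the paper: the vanishing of the angular partial derivatives forces a collinear configuration, and then $f=0$ becomes a wall relation contradicting genericity. Your complex-number formulation via $w$ is a bit cleaner than the paper's trigonometric case analysis, and you treat the two boundary faces explicitly --- in particular noting that the face $t=\ell_1-\varepsilon$ requires genericity of $\ell_{\ell_1-\varepsilon}$ rather than of $\ell$ --- whereas the paper only spells out the interior case (showing $\partial f/\partial t\ne 0$) and leaves the boundary regularity implicit in the choice of $\varepsilon$.
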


\begin{proof}
 Using angular coordinates, it is easy to see that
\begin{eqnarray*}
 f(\theta_2,\ldots,\theta_{n-1},t)&=&\|\ell\|^2-2\ell_1^2 + 2\sum_{2\leq i<j\leq n}\ell_i\ell_j \cos(\theta_i-\theta_j)+2t\sum_{i=1}^n \ell_i \cos\theta_i,
\end{eqnarray*}
where we set $\theta_1=\theta_n=0$. In particular, we can write this as
\begin{eqnarray}\label{ffunction}
 f(\theta,t)&=&C_\ell + f_1(\theta) + tf_2(\theta)
\end{eqnarray}
where $\theta=(\theta_2,\ldots,\theta_{n-1})$ and $C_\ell=\|\ell\|^2-2\ell_1^2$. Taking partial derivatives, we get
\begin{eqnarray}\label{origderi}
 \frac{\partial f}{\partial \theta_k}&=& 2\ell_k\left(\sum_{i=1}^n \tilde{\ell}_i\sin(\theta_i-\theta_k)\right)
\end{eqnarray}
for $k=2,\ldots,n-1$, where $\tilde{\ell}_1=t$ and $\tilde{\ell}_i=\ell_i$ for $i\geq 2$, and
\begin{eqnarray*}
 \frac{\partial f}{\partial t}&=&2\sum_{i=1}^n\ell_i \cos\theta_i.
\end{eqnarray*}
Assume that $\frac{\partial f}{\partial \theta_k}=0$ for all $k=2,\ldots,n-1$. Then
\begin{eqnarray}\label{cossinstuff}
 \cos \theta_k\sum_{i=1}^n\tilde{\ell}_i\sin\theta_i&=&\sin\theta_k\sum_{i=1}^n\tilde{\ell}_i\cos\theta_i
\end{eqnarray}
for all $k=2,\ldots,n-1$. If $\sum \tilde{\ell}_i\cos\theta_i=0$, we have $\frac{\partial f}{\partial t}=2(\ell_1-t)\not=0$, so we do not get a critical point. Therefore we can assume that $\sum \tilde{\ell}_i\cos\theta_i\not=0$.

If $\sum \tilde{\ell}_i\sin\theta_i=0$, we get from (\ref{cossinstuff}) that $\sin\theta_k=0$ for all $k=2,\ldots,n-1$. Therefore we have a collinear configuration, with $\theta_k=0$ or $\pi$ for $k=1,\ldots,n$. But then $\frac{\partial f}{\partial t}\not=0$ as $\ell$ is generic.

If $\sum\tilde{\ell}_i\sin\theta_i\not=0$, then $\tan \theta_k$ is defined and nonzero, and it is independent of $k$. Hence the $\theta_2,\ldots,\theta_{n-1}$ are collinear. But by (\ref{origderi}) we get $\tilde{\ell_1}\sin(-\theta_k)+\tilde{\ell}_n\sin(-\theta_k)=0$, a contradiction.
\end{proof}

Thus let
\begin{eqnarray*}
 W&=&f^{-1}(\{0\}),
\end{eqnarray*}
a compact cobordism. Note that if $(z,t)\in W$, we get
\begin{eqnarray*}
\left\|\sum_{i=2}^{n-1} \ell_i z_i+(\ell_n+t) \right\|^2& = & (\ell_1-t)^2,
\end{eqnarray*}
so $((-\sum_{i=2}^{n-1}\ell_iz_i-\ell_n)/(\ell_1-t),z_2,\ldots,z_{n-1})\in \mathcal{M}_{\ell_t}$ and all elements of $\mathcal{M}_{\ell_t}$ arise this way. Therefore $W$ is a cobordism between $\mathcal{M}_\ell$ and $\mathcal{M}_{\ell_{\ell_1-\varepsilon}}$. We also get a natural inclusion $i:W\to T^{n-1}\times [0,\ell_1-\varepsilon]$ extending the inclusions $\mathcal{M}_{\ell_t}\to T^{n-1}$.

Since $\ell$ is generic, we can assume that $\ell_n>\ell_1$.

\begin{lemma}\label{morsefctg}
 The map $g:W\to [0,\ell_1-\varepsilon]$ given by $g(z,t)=t$ is a Morse function. Its critical points are in one-to-one correspondence with those subsets $J\subset \{2,\ldots,n-1\}$ which are short with respect to $\ell$ and such that $\{1\}\cup J$ is long with respect to $\ell$. The critical point corresponding to such $J$ is given by
\begin{eqnarray*}
q_J&=&(u_2,\ldots,u_{n-1},t)
\end{eqnarray*}
where $u_j=1$ for $j\notin J$ and $u_j=-1$ for $j\in J$ and $t$ satisfies
\begin{eqnarray*}
 2t\left(\sum_{j=1}^nu_j\ell_j\right)&=&\ell_1^2-\left(\sum_{j=2}^nu_j\ell_j\right)^2.
\end{eqnarray*}
Furthermore, the index of $q_J$ is $|J|$.
\end{lemma}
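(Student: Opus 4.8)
The plan is to realize $g$ as the restriction to $W=f^{-1}(0)$ of the projection $(z,t)\mapsto t$, to locate its critical points through the proportionality of $dt$ and $df$, and to compute the indices via the implicit function theorem; throughout I would use the formulas (\ref{ffunction}), (\ref{origderi}) and (\ref{cossinstuff}) already set up for Lemma \ref{critpointsf}.

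\emph{Critical points.} As $0$ is a regular value of $f$, a point $(z,t)\in W$ is critical for $g$ exactly when $dt$ kills $T_{(z,t)}W=\ker df$, i.e.\ when $\partial f/\partial\theta_k=0$ for $k=2,\dots,n-1$ (and then $\partial f/\partial t\neq 0$). I would rerun the argument in the proof of Lemma \ref{critpointsf}, but on $W$: there $\|\sum_i\tilde{\ell}_iz_i\|=\ell_1-t>0$, so $\sum_i\tilde{\ell}_i\cos\theta_i$ and $\sum_i\tilde{\ell}_i\sin\theta_i$ are not both zero; if $\sum_i\tilde{\ell}_i\sin\theta_i\neq 0$ then (\ref{cossinstuff}) makes $\tan\theta_k$ a nonzero constant, so by (\ref{origderi}) $\partial f/\partial\theta_k=-2\ell_k(t+\ell_n)\sin\theta_k\neq 0$, a contradiction; hence $\sum_i\tilde{\ell}_i\sin\theta_i=0$, then $\sum_i\tilde{\ell}_i\cos\theta_i\neq 0$, and (\ref{cossinstuff}) forces $\sin\theta_k=0$, so $z_k=u_k\in\{\pm1\}$. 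With $J=\{k\in\{2,\dots,n-1\}\mid u_k=-1\}$ and $s_J=\sum_{i=2}^n u_i\ell_i$ (taking $u_1=u_n=1$), the equation $f(u,t)=0$ becomes $(t+s_J)^2=(\ell_1-t)^2$, i.e.\ $t=t_J:=(\ell_1-s_J)/2$ or $s_J=-\ell_1$; the latter reads $\sum_{j\in J}\ell_j=\sum_{j\notin J}\ell_j$ and is excluded by genericity. So $q_J=(u,t_J)$ is the unique candidate, it satisfies $2t_J(\ell_1+s_J)=\ell_1^2-s_J^2$ and $\partial f/\partial t|_{q_J}=2(\ell_1+s_J)\neq 0$. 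Putting $a^2=\sum_{k\in\{2,\dots,n-1\}\setminus J}\ell_k$ and $b^2=\sum_{k\in J}\ell_k$, so that $s_J=\ell_n+a^2-b^2$, one checks $t_J>0\iff\ell_1+b^2>\ell_n+a^2\iff\{1\}\cup J$ is long, and $t_J<\ell_1\iff b^2<\ell_1+\ell_n+a^2\iff J$ is short; for $\varepsilon$ small the latter is equivalent to $t_J\le\ell_1-\varepsilon$, and for generic $\ell$ (and suitable $\varepsilon$) the values $t_J\in\{0,\ell_1-\varepsilon\}$ do not occur. This gives the asserted bijection and shows $0,\ell_1-\varepsilon$ are regular values of $g$.

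\emph{Indices.} Near $q_J$ the implicit function theorem (legitimate since $\partial f/\partial t\neq 0$) writes $W$ as a graph $t=t(\theta)$, so $\mathrm{Hess}_{q_J}(g)=-(\partial f/\partial t|_{q_J})^{-1}\mathrm{Hess}_\theta f|_{q_J}$; as $J$ is short, $\partial f/\partial t|_{q_J}=2(\ell_1+s_J)>0$, so the index of $q_J$ is the number of positive eigenvalues of $H:=\mathrm{Hess}_\theta f|_{q_J}$. Differentiating (\ref{origderi}) and evaluating at $q_J$ (using $\cos(\theta_i-\theta_k)=u_iu_k$, $\sin\theta_i=0$) gives $H_{kl}=2\ell_k\ell_l u_ku_l$ for $k\neq l$ and $H_{kk}=2\ell_k^2-2\ell_ku_k(t_J+s_J)$; since $2(t_J+s_J)=\ell_1+s_J=:c>0$ this says $H=2ww^{\top}-c\,\mathrm{diag}(\ell_ku_k)$ with $w=(\ell_ku_k)_k$. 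Conjugating by $\mathrm{diag}(\ell_k^{-1/2})$ turns $H$ into $2vv^{\top}-c\Sigma$ with $v_k=\ell_k^{1/2}u_k$ and $\Sigma=\mathrm{diag}(u_k)$; here $-c\Sigma$ has exactly $|J|$ positive and $n-2-|J|$ negative eigenvalues. Along the path $\mu\mapsto 2\mu vv^{\top}-c\Sigma$, $\mu\in[0,1]$, the rank-one determinant identity gives
\[
 \det\bigl(2\mu vv^{\top}-c\Sigma\bigr)=\det(-c\Sigma)\bigl(1-2\mu c^{-1}(a^2-b^2)\bigr),
\]
and since $\{1\}\cup J$ is long we have $a^2-b^2<\ell_1-\ell_n$, so $2(a^2-b^2)<\ell_1+\ell_n+(a^2-b^2)=c$ and $1-2\mu c^{-1}(a^2-b^2)>0$ for all $\mu\in[0,1]$. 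Hence the determinant never vanishes along the path, the signature is constant, and at $\mu=1$ we conclude $H$ is nondegenerate with exactly $|J|$ positive eigenvalues. Therefore $q_J$ is a nondegenerate critical point of index $|J|$ and $g$ is a Morse function.

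\emph{Main obstacle.} The delicate step is this last eigenvalue count: a priori the rank-one perturbation $2vv^{\top}$ could raise the number of positive eigenvalues of $-c\Sigma$ from $|J|$ to $|J|+1$, and it is exactly the hypothesis that $\{1\}\cup J$ is long — equivalently $t_J>0$, equivalently $\partial f/\partial t|_{q_J}>0$ — which rules this out. Everything else is routine manipulation of the formulas already in place.
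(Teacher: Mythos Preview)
Your argument is correct. The location of the critical points follows the paper's line almost verbatim (both reduce to $\partial f/\partial\theta_k=0$ for all $k$ and then run the collinearity analysis from Lemma~\ref{critpointsf} on $W$), and your derivation of $t_J$ and of the range condition $t_J\in(0,\ell_1-\varepsilon)$ is the same computation carried out more explicitly.

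The one genuine difference is in the index computation. The paper writes the Hessian in the form $\Delta - E$ (diagonal minus rank one), conjugates, and then applies Sylvester's criterion to the sequence of leading principal minors, invoking \cite[Lm.~1.5]{faritr} for the determinant formula. You instead write the (congruent) matrix as a rank-one perturbation $2vv^\top - c\Sigma$ of a diagonal matrix whose signature is visibly $(|J|,\,n-2-|J|)$, and then use the matrix determinant lemma along the homotopy $\mu\mapsto 2\mu vv^\top - c\Sigma$ to show the determinant never vanishes, hence no eigenvalue crosses zero. This is a slicker and fully self-contained way to get the signature; the key inequality $2(a^2-b^2)<c$, equivalently $a^2-b^2<\ell_1+\ell_n$, is exactly the shortness of the complement of $\{1\}\cup J$, so your identification of ``$\{1\}\cup J$ long'' as the crucial hypothesis is on the mark. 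Both methods ultimately exploit the same rank-one structure of the Hessian, but yours avoids the external reference and the bookkeeping of reordering the coordinates.
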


It is straightforward to check that $f(q_J)=0$. Also, the conditions that $\{1\}\cup J$ is long while $J$ is short with respect to $\ell$ ensures that
\[
 \sum_{j=1}^nu_j\ell_j>0\mbox{ and }\sum_{j=2}^nu_j \ell_j \in(-\ell_1,\ell_1)
\]
so that $q_J$ is an interior point of $W$. Note that this gives a more precise condition on $\varepsilon>0$: it is so small that $\sum_{j=2}^nu_j\ell_j<\ell_1-\varepsilon$ for all $J\subset \{2,\ldots,n-1\}$ which are short with respect to $\ell'$ and such that $\{1\}\cup J$ is long with respect to $\ell$. Again $u_j=1$ for $j\notin J$ and $u_j=-1$ for $j\in J$.

\begin{proof}
If $\frac{\partial f}{\partial \theta_k}\not=0$ at $(\theta_2,\ldots,\theta_{n-1},t)\in W\subset T^{n-2}\times [0,\ell_1-\varepsilon]$ for some $k\in\{2,\ldots,n-1\}$, then the $k$-th coordinate of a neighborhood of this point in $W$ is determined by the other coordinates, including the $t$-coordinate, by the implicit function theorem. Therefore projection to the $t$-coordinate does not contain critical points in this neighborhood. Hence to get a critical point of $g$, we need $\frac{\partial f}{\partial \theta_k}=0$ for all $k=2,\ldots,n-1$.

Recall that then equation (\ref{cossinstuff}) holds with $\theta_1=\theta_n=0$. If $\sum\tilde{\ell}_i\cos\theta_i=0$ and $\sum\tilde{\ell}_i\sin\theta_i=0$, we get a closed configuration $(u_1,\ldots,u_n)$ for $\tilde{\ell}$ satisfying $u_1=u_n=1$. But then $f(u_2,\ldots,u_{n-1},t)=-(\ell_1-t)^2\not=0$ for all $t\in [0,\ell_1-\varepsilon]$.

We can therefore assume that $\sum\tilde{\ell}_i\cos\theta_i\not=0$ or $\sum\tilde{\ell}_i\sin\theta_i\not=0$. If $\sum\tilde{\ell}_i\cos\theta_i=0$, we get from (\ref{cossinstuff}) that $\cos\theta_k=0$ for $k=2,\ldots,n-1$. Then
\begin{eqnarray*}
f(\theta_2,\ldots,\theta_{n-1},t)&\geq &(\ell_n+t)^2-(\ell_1-t)^2 \,\,\,>\,\,\,0.
\end{eqnarray*}
Recall that we assume $\ell_n>\ell_1$. But these configurations do not lie in $W$.

If $\sum\tilde{\ell}_i\sin\theta_i=0$, we can argue as in the proof of Lemma \ref{critpointsf} to get a collinear configuration $(u_2,\ldots,u_{n-1},t)$ with $u_i\in \{\pm 1\}$. It is easy to check that the condition $f(u_2,\ldots,u_{n-1},t)=0$ and $t\in[0,\ell_1-\varepsilon]$ leads exactly to the points $q_J$ where $J\subset \{2,\ldots,n-1\}$ which are short with respect to $\ell'$ and such that $\{1\}\cup J$ is long with respect to $\ell$.

For these points we know from Lemma \ref{critpointsf} that $\frac{\partial f}{\partial t}\not=0$, so we can express the $t$-coordinate in terms of $(\theta_2,\ldots,\theta_{n-1})$ near $q_J$. It follows from (\ref{ffunction}) that in these coordinates, with $\theta=(\theta_2,\ldots,\theta_{n-1})$ we have
\begin{eqnarray*}
 g(\theta)&=&\frac{-(C_\ell+f_1(\theta))}{f_2(\theta)}
\end{eqnarray*}
with $f_1$ and $f_2$ as defined by (\ref{ffunction}), and hence
\begin{eqnarray*}
 \frac{\partial g}{\partial{\theta_k}}&=&\frac{(C_\ell+f_1(\theta))\frac{\partial f_2}{\partial \theta_k}}{(f_2(\theta))^2} - \frac{\frac{\partial f_1}{\partial \theta_k}}{f_2(\theta)}.
\end{eqnarray*}
As the partial derivatives of $f_1$ and $f_2$ are sums of $\sin \theta_k$, we get that the $q_J$ are indeed critical points of $g$. Furthermore, taking further partial derivatives, we get that
\begin{eqnarray*}
 \frac{\partial^2 g}{\partial \theta_k\partial \theta_l}(q_J) &=& \frac{-1}{f_2(u_2,\ldots,u_{n-1})}\left(t\frac{\partial^2 f_2}{\partial \theta_k\partial \theta_l} +\frac{\partial^2 f_1}{\partial \theta_k\partial \theta_l}\right).
\end{eqnarray*}
Note that $f_2(u_2,\ldots,u_{n-1})=\sum_{j=1}^n \ell_j u_j>\ell_1$ by the conditions on $J$ so the factor in front is negative.

Taking partial derivatives of $f_1$ and $f_2$ gives
\begin{eqnarray*}
 -f_2(u_2,\ldots,u_{n-1})\frac{\partial^2 g}{\partial \theta_k\partial \theta_l}(q_J) &=& -2t\Delta(\ell_2 u_2,\ldots,\ell_{n-1}u_{n-1})+\left(2\ell_k\ell_l u_ku_l\right)_{k,l=2}^{n-1}
\end{eqnarray*}
where $\Delta$ is a diagonal matrix with the appropriate diagonal terms. The index of $q_J$ can now be calculated analogously to the calculation in the proof of \cite[Lm.1.4]{faritr}, and it is indeed the cardinality of $J$.

We need the signature of $\Delta(t\ell_2u_2,\ldots,t\ell_{n-1}u_{n-1})-(\ell_ku_k\ell_mu_m)_{k,m=2}^{n-1}$, which is congruent to $D-E$, where $D=\Delta(\frac{t}{\ell_2}u_2,\ldots,\frac{t}{\ell_{n-1}}u_{n-1})$ and $E$ has 1 in every entry.

For simplicity, reorder the elements $\{2,\ldots,n-1\}$ so that $u_2,\ldots,u_i=-1$ and $u_{i+1},\ldots,u_{n-1}=1$. In particular, we have $|J|=i-1$. Let $D_r-E_r$ be the principal minor of size $r$ of $D-E$. Then by \cite[Lm.1.5]{faritr} we have
\begin{eqnarray*}
 \det(D_r-E_r) &=& \prod_{j=2}^r\frac{t}{\ell_j}u_j\left(1-\sum_{j=2}^r \frac{\ell_j}{t}u_j\right)\\
&=&t^{r-1}\prod_{j=2}^r\frac{u_j}{\ell_j}\left(t-\sum_{j=2}^r\ell_ju_j\right)
\end{eqnarray*}
Now $t-\sum_{j=2}^r\ell_ju_j=t+\sum_{j=2}^i \ell_j-\sum_{j=i+1}^r \ell_j$ which is positive for all $r\leq n-1$, because
\begin{eqnarray*}
 \ell_1+\sum_{j=2}^i \ell_j &>& \sum_{j=i+1}^{n-1}\ell_j +\ell_n
\end{eqnarray*}
as $J\cup\{1\}$ is a long subset. Therefore, the sign of the determinant of $D_r-E_r$ changes exactly $(i-1)$-times, and by the Sylvester criterion this implies that the signature is $i-1=|J|$.
\end{proof}

\begin{lemma}\label{subsetssmall}
 We have $\dot{\mathcal{S}}(\ell_{\ell_1-\varepsilon})\subset \dot{\mathcal{S}}(\ell)$, and $\dot{\mathcal{S}}(\ell)-\dot{\mathcal{S}}(\ell_{\ell_1-\varepsilon})$ is in 1-1 correspondence with the critical points of $g:W\to \R$.
\end{lemma}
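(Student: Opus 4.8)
The statement has two parts, and I would prove both directly from the relation between the short subsets of $\ell_t=(\ell_1-t,\ell_2,\ldots,\ell_{n-1},\ell_n+t)$ and those of $\ell$, using Lemma \ref{morsefctg} only for the description of the critical points of $g$. Throughout write $\bar{J}=\{2,\ldots,n-1\}-J$ for $J\subseteq\{2,\ldots,n-1\}$, and note that each $\ell_t$ with $t\in[0,\ell_1-\varepsilon]$ is again ordered with largest entry $\ell_n+t$, so that all the sets $\dot{\mathcal{S}}_k$ are formed using the index $n$.

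\textbf{The inclusion $\dot{\mathcal{S}}(\ell_{\ell_1-\varepsilon})\subseteq\dot{\mathcal{S}}(\ell)$.} I would argue by monotonicity in $t$. Fix $J\subseteq\{1,\ldots,n-1\}$. If $1\in J$, then both the first and the last coordinate lie in $J\cup\{n\}$, so passing from $\ell$ to $\ell_t$ adds $-t$ and $+t$ to the \emph{same} side of the inequality defining ``$J\cup\{n\}$ is short'', whence this property does not depend on $t$. If $1\notin J$, the first coordinate lies in the complement of $J\cup\{n\}$ and the $n$-th in $J\cup\{n\}$, so the left side of ``$J\cup\{n\}$ is short'' increases with $t$ while the right side decreases; in particular, if $J\cup\{n\}$ is short with respect to $\ell_{\ell_1-\varepsilon}$ then it is short with respect to $\ell_0=\ell$. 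Together these two cases give the inclusion, and they also show that every $J\in\dot{\mathcal{S}}(\ell)-\dot{\mathcal{S}}(\ell_{\ell_1-\varepsilon})$ must satisfy $1\notin J$.

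\textbf{The bijection.} To such a $J$ (so $J\subseteq\{2,\ldots,n-1\}$, with $J\cup\{n\}$ short with respect to $\ell$ but long with respect to $\ell_{\ell_1-\varepsilon}$) I would assign the critical point $q_{\bar J}$ of Lemma \ref{morsefctg}, and check that this is a bijection onto the critical points of $g$. The work is to translate the two conditions on $J$ into the two conditions ``$\bar{J}$ is short with respect to $\ell$'' and ``$\{1\}\cup\bar{J}$ is long with respect to $\ell$'' defining the critical points. First, since the complement of $\{1\}\cup\bar{J}$ in $\{1,\ldots,n\}$ is $J\cup\{n\}$, ``$J\cup\{n\}$ is short with respect to $\ell$'', i.e. $\sum_{j\in J}\ell_j+\ell_n<\ell_1+\sum_{j\in\bar J}\ell_j$, is literally ``$\{1\}\cup\bar{J}$ is long with respect to $\ell$''. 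Second, by the choice of $\varepsilon$ (shortness with respect to $\ell_{\ell_1-\varepsilon}$ is detected by the length vector $\ell'=(\ell_2,\ldots,\ell_{n-1},\ell_n+\ell_1)$, and $\ell_{\ell_1-\varepsilon}$ is generic so ``not short'' means ``long''), ``$J\cup\{n\}$ is long with respect to $\ell_{\ell_1-\varepsilon}$'' is equivalent to ``$\{1\}\cup J\cup\{n\}$ is long with respect to $\ell$'', i.e. $\sum_{j\in\bar J}\ell_j<\ell_1+\ell_n+\sum_{j\in J}\ell_j$, and since the complement of $\bar{J}$ is $\{1\}\cup J\cup\{n\}$ this is ``$\bar{J}$ is short with respect to $\ell$''. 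One also observes that $\{1\}$ is never long with respect to $\ell$, forcing $\bar{J}\neq\emptyset$, hence $|J|\leq n-3$ and $J$ genuinely lies in $\dot{\mathcal{S}}(\ell)$. Reading these equivalences backwards shows that $K\mapsto\{2,\ldots,n-1\}-K$ carries the critical points of $g$ into $\dot{\mathcal{S}}(\ell)-\dot{\mathcal{S}}(\ell_{\ell_1-\varepsilon})$ and is inverse to $J\mapsto\bar{J}$, proving the 1-1 correspondence.

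I do not expect a serious obstacle: the argument is elementary bookkeeping with the three length vectors $\ell$, $\ell'$ and $\ell_{\ell_1-\varepsilon}$. The only point that needs care is to invoke the choice of $\varepsilon$ at the right moment, so that shortness with respect to $\ell_{\ell_1-\varepsilon}$ may be replaced by the corresponding condition on $\ell$, and to keep the inequalities strict (using genericity of $\ell$, and hence of $\ell'$ and $\ell_{\ell_1-\varepsilon}$). As a consistency check, the value of $t$ at which $J$ leaves $\dot{\mathcal{S}}(\ell_t)$ is exactly the value of $g$ at $q_{\bar J}$, and the identity $|\bar J|=n-2-|J|$ fits the index computation of Lemma \ref{morsefctg} against the change of Betti numbers predicted by Theorem \ref{bettinumbers}.
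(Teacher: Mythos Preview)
Your proof is correct and follows essentially the same approach as the paper: the inclusion is established by the two cases $1\in J$ and $1\notin J$ (you phrase this as monotonicity in $t$, the paper writes out the explicit inequalities), and the bijection is given by complementation $J\mapsto\bar J=\{2,\ldots,n-1\}-J$, translating the two conditions on $J$ into the two conditions of Lemma~\ref{morsefctg} on $\bar J$. Your added observation that $\bar J\neq\emptyset$ (hence $|J|\leq n-3$) is a useful sanity check not made explicit in the paper.
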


\begin{proof}
 Let $J\in \dot{\mathcal{S}}(\ell_{\ell_1-\varepsilon})$. Assume $1\in J$. Then
\begin{eqnarray*}
 \varepsilon+\sum_{j\in J-\{1\}}\ell_j+(\ell_n+\ell_1-\varepsilon)&<&\sum_{j\notin J\cup\{n\}}\ell_j
\end{eqnarray*}
which implies that $J\cup\{n\}$ is short with respect to $\ell$, that is, $J\in\dot{\mathcal{S}}(\ell)$.

If $1\notin J$, then
\begin{eqnarray*}
 \sum_{j\in J} \ell_j+(\ell_n+\ell_1-\varepsilon)&<& \sum_{j\notin J\cup\{1,n\}}\ell_j+\varepsilon.
\end{eqnarray*}
Since we can assume that $\ell_1>2\varepsilon$, we get $J\in\dot{\mathcal{S}}(\ell)$.

Now if $J\in \dot{\mathcal{S}}(\ell)-\dot{\mathcal{S}}(\ell_{\ell_1-\varepsilon})$, it is easy to see from the argument above that $1\notin J$. Let $K=\{2,\ldots,n-1\}-J$. The condition $J\notin \dot{\mathcal{S}}(\ell_{\ell_1-\varepsilon})$ gives
\begin{eqnarray*}
 \sum_{j\in J}\ell_j+\ell_n+\ell_1 & > & \sum_{j\in K}\ell_j
\end{eqnarray*}
so $K$ is short with respect to $\ell$. But since $J\in \dot{\mathcal{S}}(\ell)$, we get $K\cup \{1\}$ is long with respect to $\ell$. The reverse argument shows that every such $K$ corresponds to an element of $\dot{\mathcal{S}}(\ell)-\dot{\mathcal{S}}(\ell_{\ell_1-\varepsilon})$.
\end{proof}

Let us analyze potential indices for critical points. To get a critical point of index 0, we would need the set $\{1\}$ to be long which is impossible. To get a critical point of index 1, we need $\{1,i\}$ to be long for some $i\in \{2,\ldots,n-1\}$. In particular we get $\{1,n\}$ to be long and $\mathcal{M}_\ell$ is a $(n-3)$-sphere.

To get a critical point of index 2, we need $\{1,n-2,n-1\}$ to be long. In particular, if $\ell$ is special, it is of type $\{1,n-2,n-1\}$, a cohomologically rigid type. By standard transversality arguments we now get

\begin{corollary}\label{fundgrptrns}
 Let $\ell$ be a special length vector of type different from $\{1,n-2,n-1\}$. Then $\mathcal{M}_\ell$ has the same fundamental group as the cobordism $W$.
\end{corollary}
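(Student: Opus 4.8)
The plan is to read off from Lemma~\ref{morsefctg} the possible indices of the critical points of the Morse function $g\colon W\to[0,\ell_1-\varepsilon]$, to rule out under the hypothesis on $\ell$ every index below $3$, and then to invoke the standard fact that a cobordism assembled entirely from handles of index $\ge 3$ induces an isomorphism on the fundamental group of its lower boundary $\mathcal M_\ell=g^{-1}(0)$.

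First I would bound the indices from below. By Lemma~\ref{morsefctg} a critical point of index $k$ corresponds to a subset $J\subseteq\{2,\ldots,n-1\}$ with $|J|=k$ which is short with respect to $\ell$ while $\{1\}\cup J$ is long with respect to $\ell$; recall also that ``long'' is preserved by the order $\le$, since for equinumerous $J\le J'$ one has $\sum_{j\in J}\ell_j\le\sum_{j\in J'}\ell_j$. An index-$0$ point would make $\{1\}$ long, which is impossible. An index-$1$ point would make $\{1,i\}$ long for some $i\le n-1$; as $\{1,i\}\le\{1,n\}$ this would make every $\{i',n\}$ long, so $\dot{\mathcal S}_1(\ell)=\emptyset$, and then $\dot{\mathcal S}_{n-4}(\ell)=\emptyset$ by downward closure of $\dot{\mathcal S}(\ell)$, contradicting that $\ell$ is special. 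An index-$2$ point would make $\{1,i,j\}$ long for some $2\le i<j\le n-1$; as $\{1,i,j\}\le\{1,n-2,n-1\}$ this would make $\{1,n-2,n-1\}$ long, and since the type is the minimal long $3$-element subset of $\{1,\ldots,n-1\}$ (Lemma~\ref{totalorder}), the type would then have first element $1$, which by Remark~\ref{alltypes} forces the type to be $\{1,n-2,n-1\}$ --- excluded by hypothesis. Hence every critical point of $g$ has index at least $3$.

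To finish I would note that, $\ell$ being special, $\dot{\mathcal S}_{n-3}(\ell)=\emptyset$, so $\mathcal M_\ell$ is connected and hence so is $W$, which is obtained from the collar $\mathcal M_\ell\times[0,\varepsilon']$ by attaching in turn the handles associated to the critical points of $g$, all of index $\ge 3$. Thus $W$ is homotopy equivalent to $\mathcal M_\ell$ with cells of dimension $\ge 3$ attached, so the inclusion $\mathcal M_\ell\hookrightarrow W$ induces an isomorphism on fundamental groups; equivalently, by van Kampen's theorem each such handle glues in a simply connected piece along a simply connected overlap and leaves $\pi_1$ unchanged. The substantive part of the argument is the index count, which uses only Lemma~\ref{morsefctg} and the classification of types in Remark~\ref{alltypes}; the one step beyond combinatorics is the last --- that handles of index $\ge 3$ do not change $\pi_1$ --- which is classical but relies on placing the attaching spheres (of dimension $\ge 2$) in general position inside the level sets of $g$ so that the handlebody inclusions are $1$-connected, and is what the reference to ``standard transversality'' means. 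I expect this last point to be the only real subtlety.
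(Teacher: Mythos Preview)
Your argument is exactly the paper's own (the paragraph immediately preceding the corollary), written out with more care: the same index count via Lemma~\ref{morsefctg} ruling out indices $0$, $1$, $2$, and the same appeal to handle theory/transversality for the conclusion that handles of index $\ge 3$ leave $\pi_1$ of the lower boundary unchanged.

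One small caveat, present in the paper's argument as well: the step ``first element $1$ forces the type to be $\{1,n-2,n-1\}$ by Remark~\ref{alltypes}'' fails when $n=5$, since then the types $\{n-4,n-3,n-1\}=\{1,2,4\}$ and $\{n-4,n-3,n-2\}=\{1,2,3\}$ also have first element $1$, and for these index-$2$ critical points genuinely occur (for instance $\ell=(1,1,1,2,2)$ has type $\{1,2,4\}$ and $J=\{2,4\}$ is short while $\{1,2,4\}$ is long). This does not affect the paper, as the corollary is only invoked for $n\ge 6$ in the subsequent sections; but your deduction, like the paper's, tacitly needs $n\ge 6$ for the index-$2$ exclusion to go through.
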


To understand the fundamental group of $W$, we start with
\begin{eqnarray}\label{prodfund}
\pi_1(\mathcal{M}_{\ell_{\ell_1-\varepsilon}})& \cong & \pi_1(\mathcal{M}_{\ell'})\times \Z,
\end{eqnarray}
where $\ell'=(\ell_2,\ldots,\ell_{n-1},\ell_n+\ell_1)$ and analyze the critical points of index $n-3$ and $n-4$. Note that there can only be a critical point of index $n-2$ if $\{n\}$ is long with respect to $\ell'$.

Before we consider the critical points of index $n-3$ and $n-4$ let us first observe that the types of $\ell$ and $\ell'$ are closely related.

\begin{lemma}\label{typeinduction}
 Let $\ell$ be a special length vector and $\ell'=(\ell_2,\ldots,\ell_{n-1},\ell_n+\ell_1)$.
\begin{enumerate}
 \item If $\ell$ is of type $\{i,n-2,n-1\}$ with $i\geq 2$, then $\ell'$ is special of type $\{i-1,(n-1)-2,(n-1)-1\}$.
\item For $n\geq 6$, if $\ell$ is of type $\{n-4,n-3,n-1\}$, then $\ell'$ is special of type $\{(n-1)-4,(n-1)-3,(n-1)-1\}$.
\end{enumerate}

\end{lemma}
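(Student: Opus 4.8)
The plan is to argue combinatorially, since the type of an ordered special length vector depends only on which subsets of its index set are long. Set $m=n-1$, so that $\ell'=(\ell'_1,\ldots,\ell'_m)$ with $\ell'_p=\ell_{p+1}$ for $p\le m-1$ and $\ell'_m=\ell_1+\ell_n$; in particular $\ell'$ is ordered. First I would fix a bijection between subsets $P\subseteq\{1,\ldots,m\}$ and subsets $\tilde Q\subseteq\{1,\ldots,n\}$ that contain $1$ and $n$ simultaneously or not at all: send $p\in P$ with $p\le m-1$ to $p+1$, and let $m\in P$ correspond to $\{1,n\}\subseteq\tilde Q$. From the formula for $\ell'$ one checks that $\sum_{p\in P}\ell'_p=\sum_{j\in\tilde Q}\ell_j$ and that this bijection carries complements in $\{1,\ldots,m\}$ to complements in $\{1,\ldots,n\}$ (treating the cases $m\in P$ and $m\notin P$ separately). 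Hence $P$ is long, short, or neither with respect to $\ell'$ exactly as $\tilde Q$ is long, short, or neither with respect to $\ell$.

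From this correspondence $\ell'$ is generic because $\ell$ is. A $2$- or $3$-element subset of $\{1,\ldots,m-1\}$ omits $m$, so it corresponds via the plain shift $p\mapsto p+1$ to a $2$- or $3$-element subset of $\{2,\ldots,n-1\}$. As $\ell$ is special there is no $2$-element long subset of $\{1,\ldots,n-1\}$, hence none inside $\{2,\ldots,n-1\}$, so $\dot{\mathcal{S}}_{m-3}(\ell')=\emptyset$. Under the hypotheses of (1) and (2) the type $T$ of $\ell$ satisfies $\min T\ge 2$, so $T\subseteq\{2,\ldots,n-1\}$; since $T$ is long with respect to $\ell$, its preimage is a $3$-element long subset with respect to $\ell'$, whence $\dot{\mathcal{S}}_{m-4}(\ell')\ne\emptyset$. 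Thus $\ell'$ is special and has a well-defined type.

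It remains to identify that type. Because $p\mapsto p+1$ is an order isomorphism for the partial order $\le$ used to define types, and (by Lemma \ref{totalorder}) the type of $\ell'$ is the $\le$-minimal $3$-element subset of $\{1,\ldots,m-1\}$ that is long with respect to $\ell'$, the type of $\ell'$ is obtained by taking the $\le$-minimal $3$-element subset of $\{2,\ldots,n-1\}$ that is long with respect to $\ell$ (such a minimum exists by Lemma \ref{totalorder}) and subtracting $1$ from each element. Now, again by Lemma \ref{totalorder} and the definition of type, $T$ is the $\le$-minimal $3$-element long subset of $\{1,\ldots,n-1\}$, so every $3$-element long subset $K$ satisfies $K\ge T$; as $|K|=|T|=3$, the order-preserving injection witnessing $K\ge T$ is a bijection, whence $\min K\ge\min T\ge 2$. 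Therefore every $3$-element long subset of $\{1,\ldots,n-1\}$ already lies in $\{2,\ldots,n-1\}$, so the $\le$-minimal one there is $T$ itself. Subtracting $1$ yields $\{i-1,m-2,m-1\}$ when $T=\{i,n-2,n-1\}$ with $i\ge 2$, and $\{m-4,m-3,m-1\}$ when $T=\{n-4,n-3,n-1\}$ with $n\ge 6$ (the bound $n\ge 6$ being exactly what forces $\min T=n-4\ge 2$) --- precisely the asserted types for a length vector with $m=n-1$ coordinates.

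I do not anticipate a genuine obstacle; the only care needed is in the case analysis for the bijection $P\leftrightarrow\tilde Q$ near the merged coordinate $\ell_1+\ell_n$ and in the routine verification that $p\mapsto p+1$ respects $\le$. The single conceptual point is that the hypotheses of (1) and (2) are exactly the conditions guaranteeing that the index $1$ does not occur in the type of $\ell$, and in that situation replacing $\ell$ by $\ell'$ --- which amounts to merging the shortest and longest bars --- leaves the minimum of the chain of long triples undisturbed.
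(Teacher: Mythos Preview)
Your argument is correct and rests on the same key observation as the paper's proof: a subset $J\subset\{2,\ldots,n-1\}$ is short with respect to $\ell$ if and only if its shift is short with respect to $\ell'$. The paper simply states this and declares the claim ``follows easily''; you have written out what the paper suppresses, including the genericity of $\ell'$, the verification that $\ell'$ is special, and the use of Lemma~\ref{totalorder} (via the order-reversing complementation on equal-size subsets) to pin down the minimal long triple.
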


\begin{proof}
 Note that $J\subset \{2,\ldots,n-1\}$ is short with respect to $\ell$ if and only if $J$ is short with respect to $\ell'$. The claim now follows easily.
\end{proof}

Lemma \ref{typeinduction} will allow us to determine the fundamental group of a special length vector via induction. We now want to set up basic objects for the fundamental group. The following Lemma is a useful tool for constructing actual loops in $\mathcal{M}_\ell$.

\begin{lemma}\label{longarm}
 Let $\ell=(\ell_1,\ldots,\ell_m)$ be an ordered length vector with $m\geq 2$ and let $p:T^m\to \R^2$ be given by
\begin{eqnarray*}
 p(z_1,\ldots,z_m)&=&\sum_{j=1}^m \ell_j z_j.
\end{eqnarray*}
Then there exists a path $\gamma:[\ell_m-\sum_{j=1}^{m-1} \ell_j,\ell_m+\sum_{j=1}^{m-1}\ell_j]\to T^m$ such that
$p\circ\gamma(t)=(t,0)$ for all $t$, and if $q_j:T^m\to S^1$ denotes projection to the $j$-th coordinate, we have
\begin{enumerate}
 \item $q_j\circ\gamma(t)\notin\{-1,1\}$ for at most two values of $j\in\{1,\ldots,m\}$.
\item ${\rm Im}(q_j\circ \gamma(t))\geq 0$ for $j\leq n-1$ and all $t$.
\item ${\rm Im}(q_m\circ \gamma(t))\leq 0$ for all $t$.
\end{enumerate}
Here ${\rm Im}(z)$ is the imaginary part of the complex number $z\in S^1\subset \C$.
\end{lemma}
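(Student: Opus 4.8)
The plan is to construct $\gamma$ explicitly by an inductive "straightening" argument on $m$, at each stage keeping all but at most two legs rigidly pointing in the direction $\pm 1$ and bending only the two "active" legs. The geometric picture is that of a robot arm stretched along the positive $x$-axis: we start fully extended (all legs pointing to $+1$, except the last leg which wraps back), and as the target point $(t,0)$ moves from $\ell_m+\sum_{j<m}\ell_j$ down to $\ell_m-\sum_{j<m}\ell_j$, we fold the legs one at a time through the upper half plane, the leg $m$ staying in the lower half plane throughout.

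First I would treat the base case $m=2$. Here $p(z_1,z_2)=\ell_1 z_1+\ell_2 z_2=(t,0)$ has, for $t\in[\ell_2-\ell_1,\ell_2+\ell_1]$, a solution with $\mathrm{Im}(z_1)\ge 0$ and $\mathrm{Im}(z_2)\le 0$ given by the law of cosines: $z_1=e^{i\alpha(t)}$, $z_2=e^{-i\beta(t)}$ with $\alpha,\beta\in[0,\pi]$ determined by $\ell_1^2 = t^2+\ell_2^2-2t\ell_2\cos\beta$ and similarly for $\alpha$, and this depends continuously on $t$. Both legs are "active" here, which is allowed (at most two). For the inductive step, given the statement for $m-1$ legs with total length $L=\sum_{j=1}^{m-1}\ell_j$, I regard the first $m-1$ legs as a single leg of variable effective length $r\in[\ell_{m-1}-\sum_{j\le m-2}\ell_j,\, L]$: the $(m-1)$-leg path $\gamma'$ from the inductive hypothesis realizes every point $(r,0)$ with $r$ in that range using at most two active legs among the first $m-1$, all in the closed upper half plane. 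I then compose: first, holding $z_m=1$ fixed, let the first $m-1$ legs collapse from the configuration realizing $(L,0)$ down to the configuration realizing $(\ell_m+\text{(something)},0)$ — more precisely, I use the two-leg base case applied to the pair (effective leg of length $r$, leg of length $\ell_m$) to move the endpoint, and simultaneously feed the required value of $r$ into $\gamma'$. At any instant at most one of the first $m-1$ legs is being bent (the one $\gamma'$ is currently moving) plus possibly leg $m$, giving at most two active legs total; and condition (3) is guaranteed because in the two-leg reduction leg $m$ always sits in the lower half plane while the composite leg sits in the upper half plane.

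The main obstacle I anticipate is bookkeeping the "at most two active legs" condition across the gluing of the two motions: one must arrange that the phase during which $\gamma'$ bends one of its legs does not overlap with the phase during which leg $m$ is bent, i.e. one must serialize the bending of leg $m-1$ (or whichever leg $\gamma'$ activates) and leg $m$ rather than doing them simultaneously. This is arranged by choosing an intermediate value $t_0$ in the parameter interval: for $t\ge t_0$ keep $z_m=1$ fixed and only run $\gamma'$ (so the active legs are exactly those of $\gamma'$, at most two among the first $m-1$), and for $t\le t_0$ freeze the first $m-1$ legs in the configuration realizing the composite length $r_0$ and bend only leg $m$ (using the base case, one active leg $m$ plus the rigid composite leg which counts as at most one more since $\gamma'$ at that frozen configuration uses at most two legs — here one must pick $r_0$ to be a configuration of $\gamma'$ using only a single bent leg, e.g. the fully-extended configuration where all of $\ell_1,\dots,\ell_{m-1}$ point to $+1$, so that $r_0=L$). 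Choosing $r_0=L$ and $t_0=\ell_m+L$ forces $t_0$ to be the right endpoint, which is too restrictive; instead I will pick $r_0$ to be the configuration of $\gamma'$ at its own left endpoint (which again uses at most two active legs), accept that during the transition one may momentarily need all of "two legs of $\gamma'$ plus leg $m$", and avoid this by instead running the induction so that $\gamma'$ is reparametrized to use only one bent leg at the matching time — this is possible since $\gamma'(t)$ for $t$ near either endpoint of its domain uses at most one active leg (the other leg is at $\pm1$). Verifying that the endpoints of $\gamma'$'s parameter interval indeed correspond to single-active-leg configurations, and that continuity is preserved at the splice point $t_0$, is the only genuinely delicate part; conditions (2) and (3) are immediate from the construction since every leg among $1,\dots,m-1$ is only ever bent into the upper half plane and leg $m$ only into the lower half plane.
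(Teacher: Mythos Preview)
Your base case $m=2$ is fine and essentially matches the paper. The inductive step, however, has two genuine problems.

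First, the inductive hypothesis applied to the first $m-1$ legs places leg $m-1$ in the \emph{lower} half-plane (that is precisely condition~(3) for the $(m-1)$-leg path $\gamma'$). Your claim that $\gamma'$ keeps all of legs $1,\dots,m-1$ in the closed upper half-plane is therefore false, and condition~(2) for the assembled $\gamma$ fails.

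Second, neither way of combining $\gamma'$ with leg $m$ works. If you treat the composite as a single leg and rotate it by an angle $\alpha$ as the two-leg base case would dictate, then every individual coordinate $z_j = e^{i\alpha}\cdot(\gamma'(r))_j$ for $j<m$ leaves $\{\pm 1\}$ as soon as $\alpha\notin\{0,\pi\}$, destroying condition~(1) for all of them simultaneously. If instead you keep the composite on the real axis and serialize, then with the first $m-1$ legs frozen at a real value $r_0$ the sum $r_0+\ell_m z_m$ is real only for $z_m\in\{\pm1\}$, so leg $m$ cannot be bent by itself; and with $z_m=1$ held fixed, running $\gamma'$ only moves the endpoint down to $\ell_m + \bigl(\ell_{m-1}-\sum_{j<m-1}\ell_j\bigr)$, far short of $\ell_m-\sum_{j<m}\ell_j$.

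The paper's iteration avoids all of this by pairing leg $m$ with each of legs $1,2,\dots,m-1$ in succession, applying the $m=2$ construction to the pair $(\ell_k,\ell_m)$ while all other legs stay fixed at $\pm 1$. Each such step carries $z_k$ from $+1$ to $-1$ through the upper half-plane while $z_m$ dips into the lower half-plane and returns to $+1$; concatenating the steps for $k=1,\dots,m-1$ sweeps exactly the interval $[\ell_m-\sum_{j<m}\ell_j,\ \ell_m+\sum_{j<m}\ell_j]$. At every instant exactly the two legs $k$ and $m$ are off $\{\pm 1\}$, and the ordering $\ell_k\le\ell_m$ guarantees each two-leg step is valid. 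If you want to phrase this as an induction, peel off leg $1$ rather than leg $m$: run the two-leg motion on $(\ell_1,\ell_m)$ with legs $2,\dots,m-1$ fixed at $+1$, then apply the inductive hypothesis to the ordered vector $(\ell_2,\dots,\ell_m)$ with leg $1$ now fixed at $-1$.
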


\begin{proof}
 For $n=2$ consider the map $\tilde{\gamma}(t)=(e^{it},1)$ for $t\in [0,\pi]$. Then ${\rm Im}(p\tilde{\gamma}(t))\geq 0$ for all $t$. So for every $t$ we can find an angle $\theta(t)\in[-\pi/2,0]$ such that $p(e^{it},e^{i\theta(t)})\in\R$. Reparametrizing this path gives $\gamma$ for $n=2$. Note that $q_2(\gamma(t))=1$ at the endpoints of the interval, while $q_1(\gamma(t))$ starts with $-1$ and ends with $1$. For $n>2$ we iterate this construction.
\end{proof}

Assume that $\ell$ is an ordered special length vector, so $\{n-3,n-2,n-1\}$ is a long set. We begin by choosing a basepoint in $\mathcal{M}_\ell$. Define $z^\ast=(z_1^\ast,\ldots,z_{n-1}^\ast)$ by $z_j=1$ for $j\in \{1,\ldots,n-4\}$, $z^\ast_{n-3}=-1$ and $z^\ast_{n-2}$, $z^\ast_{n-1}$ are chosen to give a closed linkage ($z_n=1$ here). This is possible, as $\{n-3,n-2,n-1\}$ is long. In fact, we have two choices for $z^\ast_{n-2}$ and $z^\ast_{n-1}$ so let us pick the one with ${\rm Im}(z^\ast_{n-2})>0$.

We will now assume that $\ell$ is of type $\{i,n-2,n-1\}$ with $i\leq n-4$. For $j\leq i-1$, define $\gamma_j:(S^1,1)\to \mathcal{M}_\ell,z^\ast)$ by $q_j(\gamma_j(z))=z$, $q_k(\gamma_j(z))=1$ for $k\in\{1,\ldots,n-4\}-\{j\}$, and the linkage is closed by using Lemma \ref{longarm} with $(\ell_{n-3},\ell_{n-2},\ell_{n-1})$ together with a small rotation, compare Figure \ref{closedlink}. Here $q_k:\mathcal{M}_\ell\to S^1$ is projection to the $k$-th coordinate.

\begin{figure}[h]
\begin{center}
\includegraphics[height=2.5cm,width=8cm]{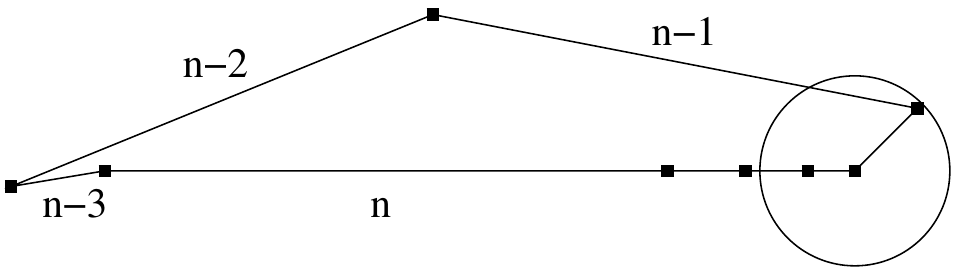}
\caption{\label{closedlink} }
\end{center}
\end{figure}

The fundamental group of $T^{n-1}$ is $\Z^{n-1}$ generated by $x_1,\ldots,x_{n-1}$, where each $x_j$ is generated by rotating the $j$-th coordinate once and keeping all other coordinates fixed. It is then clear that for $j\leq i-1$ we have $i_\#([\gamma_j])=x_j\in \pi_1(T^{n-1})$.

It is easy to see that $[\gamma_j]$ commutes with $[\gamma_k]$ for $j,k\leq i-1$.

For $k\in\{n-2,n-1\}$ we may have $\{k,n\}$ is long or short. If it is short, define $\gamma_k:(S^1,1)\to (\mathcal{M}_\ell,z^\ast)$ by rotating the $k$-th coordinate. Using Lemma \ref{longarm}, this can be done so that $i_\#([\gamma_k])=x_k\in \pi_1(T^{n-1})$. In fact, if $j\leq i-1$ is such that $\{j,k,n\}$ is short, this can be done by keeping the $j$-th coordinate fixed. Note that $\{i,k,n\}$ is already long by the type restriction. It is then not difficult to see that $[\gamma_k]$ commutes with $[\gamma_j]$, and in fact with every $[\gamma_l]$ for $l\leq j$.

To generate the fundamental group, we need to assign elements for $j\in \{i,\ldots,n-3\}$. We will do this in the next section for $i=1$, and use Lemma \ref{typeinduction} and (\ref{prodfund}) to obtain the general case.

\section{Cohomology of special length vectors of type $\{1,n-2,n-1\}$}\label{seci=1}

We have already seen in Proposition \ref{lamerigid} that type $\{1,n-2,n-1\}$ is cohomologically rigid. However, knowing the cohomology and the fundamental group for these polygon spaces serves as an induction start for the types $\{i,n-2,n-1\}$ with $i\geq 2$.

Let $\ell=(1,\ldots,1,n-3,n-3,n-2)\in \R^n$. Simple checking shows that $\ell$ is of type $\{1,n-2,n-1\}$ and that $\{n-2,n\}$ is long. On the other hand, $\ell'=(1,\ldots,1,n-2,n-2,n-2)\in \R^n$ has disconnected planar polygon space, as $\{n-2,n-1\}$ is long with respect to $\ell'$. The collection of length vectors $\ell_t=(1,\ldots,1,n-3+t,n-3+t,n-2)$ provides a cobordism between $\mathcal{M}_\ell$ and $\mathcal{M}_{\ell'}\cong T^{n-3}\sqcup T^{n-3}$. Using Morse theory as in Section \ref{diffeosec}, we get that $\mathcal{M}_\ell\cong T^{n-3}\# T^{n-3}$, a result which has been obtained previously by Hausmann \cite[Ex.2.10]{hausma}.

Let us give generators for the fundamental group in the case $n\geq 5$. For $\ell'$ we have two basepoints, $z^\ast$ from the last section and its complex conjugate $\bar{z}^\ast$. It is easy to see that $z^\ast$ and $\bar{z}^\ast$ are in different components. By the discussion above, we get
\[
\pi_1(\mathcal{M}_{\ell'},z^\ast)\,\,\,\cong \,\,\,\Z^{n-3}\,\,\,\cong\,\,\, \pi_1(\mathcal{M}_{\ell'},\bar{z}^\ast),
\]
and we can represent generators by $\delta_j:(S^1,1)\to (\mathcal{M}_{\ell'},z^\ast)$ and $\bar{\delta}_j:(S^1,1)\to (\mathcal{M}_{\ell'},\bar{z}^\ast)$ for $j=1,\ldots,n-3$, which rotate the $j$-th coordinate, and keep the $k$-th coordinate fixed for $k\in\{1,\ldots,n-3\}-\{j\}$. If we use $\bar{\delta}_j$ as the complex conjugate of $\delta_j$, we get $a_j=[\delta_j]\in \pi_1(\mathcal{M}_{\ell'},z^\ast)$ and $b_j=[\bar{\delta}_j]\in \pi_1(\mathcal{M}_{\ell'},\bar{z}^\ast)$ with $i_\#(a_j)=x_j\in \pi_1(T^{n-1})$ and $i_\#(b_j)=-x_j\in\pi_1(T^{n-1})$ for all $j\in \{1,\ldots,n-3\}$.

Now in $\mathcal{M}_\ell$ we can find a path $\delta$ between $z^\ast$ and $\bar{z}^\ast$, so we can obtain elements $b_j\in \pi_1(\mathcal{M}_\ell$ by $b_j=[\delta\ast \bar{\delta}_j\ast \delta^{-1}]$ for $j=1,\ldots,n-3$. Note however that we have to slightly change $\delta_{n-3}$, namely, we cannot just fix all coordinates $k\leq n-4$, as $\{1,\ldots,n-3,n\}$ is not a short set with respect to $\ell$. But when we rotate the $n-3$-rd coordinate, we change the $n-4$-th coordinate so that $z_{n-2}$ and $z_{n-1}$ can close the linkage. The easiest way would be $z_{n-4}=-z_{n-3}$, but then the $n-4$-th coordinate makes a full turn. It is possible to avoid a full turn by choosing $z_{n-4}=-z_{n-3}$ until $z_{n-4}=-1$ and then rotate backwards, compare Figure \ref{linkcircle}, where the second bar does a full turn while the first bar only does a half turn and then goes back at a different speed. Then $\pi_1(\mathcal{M}_\ell,z^\ast)$ is generated by $a_1,\ldots,a_{n-3},b_1,\ldots,b_{n-3}$.

\begin{figure}[h]
\begin{center}
\includegraphics[height=2cm,width=10cm]{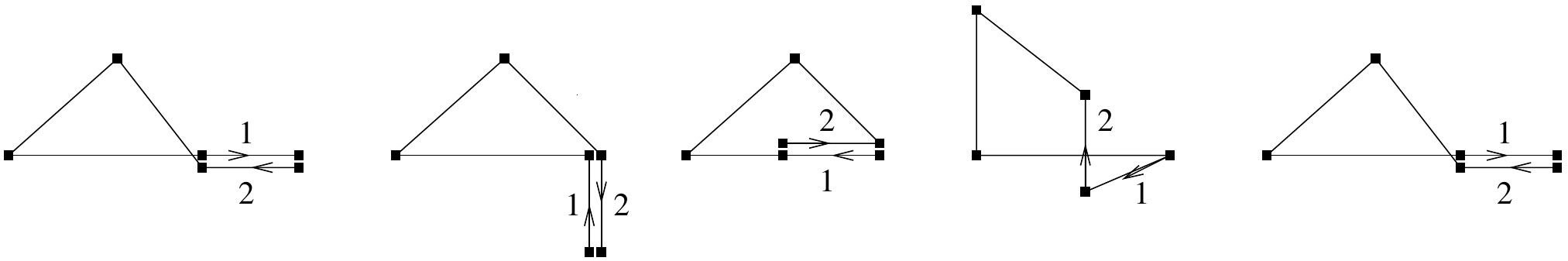}
\caption{\label{linkcircle} }
\end{center}
\end{figure}

Also, if $n\geq 6$ we get
\begin{eqnarray*}
 \pi_1(\mathcal{M}_\ell,z^\ast)&\cong & \Z^{n-3} \ast \Z^{n-3}
\end{eqnarray*}
with the first factor generated by $a_1,\ldots,a_{n-3}$ and the second factor generated by $b_1,\ldots,b_{n-3}$. If $n=5$, we get $T^2\# T^2$ is a surface of genus 2, and
\begin{eqnarray}\label{surfacegrp}
 \pi_1(\mathcal{M}_\ell,z^\ast)&\cong & \langle a_1,a_2,b_1,b_2\,|\,[a_1,a_2][b_1,b_2] \,\rangle.
\end{eqnarray}

If $n\geq 6$ it is possible that $\{k,n\}$ is short for $k\in\{n-2,n-1\}$. In this case a simple Morse theory argument, compare also \cite[Ex.2.11]{hausma}, gives
\begin{eqnarray}\label{fundgrpi=1}
 \pi_1(\mathcal{M}_\ell,z^\ast)&\cong & \Z^{n-3} \ast \Z^{n-3} \ast F_l
\end{eqnarray}
with $l$ the number of elements in $\{n-2,n-1\}$ with $\{k,n\}$ short.

\begin{proposition}\label{cohlametype}
 Let $\ell$ be a special length vector of type $\{1,n-2,n-1\}$ with $n\geq 5$. Then
\begin{eqnarray*}
 H^\ast_{(1)}(\mathcal{M}_\ell;\Z)&\cong & \Lambda_\Z[A_1,\ldots,A_{n-1},B_1,\ldots,B_{n-3}]/I,
\end{eqnarray*}
where $I$ is the ideal generated by
\begin{gather*}
 A_1\cdots A_{n-3}+ (-1)^{n-3} B_1\cdots B_{n-3} \\
 A_iB_k\mbox{ for } i\in\{1,\ldots,n-1\}, k\in\{1,\ldots,n-3\} \\
 A_iA_k \mbox{ for } i\in\{1,\ldots,n-1\}, k\in\{n-2,n-1\}\\
 A_k  \mbox{ if } \{k,n\}\mbox{ is long.}
\end{gather*}

\end{proposition}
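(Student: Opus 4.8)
The plan is to realize the generators $A_i$, $B_j$ as Poincaré duals of the explicit loops introduced in Section~\ref{seci=1}, to check that the four listed families of elements become relations, and then to rule out further relations by a rank count. By Proposition~\ref{lamerigid} there are exactly three chambers of type $\{1,n-2,n-1\}$, distinguished by $l:=\#\{k\in\{n-2,n-1\}:\{k,n\}\ \mathrm{short}\}\in\{0,1,2\}$; by the Morse-theoretic description preceding~(\ref{fundgrpi=1}) (see also \cite{hausma}) the manifold $\mathcal{M}_\ell$ is a connected sum of two copies of $T^{n-3}$ and $l$ copies of $S^1\times S^{n-4}$, and $\pi_1(\mathcal{M}_\ell)\cong\Z^{n-3}\ast\Z^{n-3}\ast F_l$, the free factors being generated by $[\delta_1],\dots,[\delta_{n-3}]$, by $[\delta\ast\bar\delta_1\ast\delta^{-1}],\dots,[\delta\ast\bar\delta_{n-3}\ast\delta^{-1}]$, and by the loops $\gamma_k$ ($k\in\{n-2,n-1\}$ with $\{k,n\}$ short). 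I would take $A_j\in H^1(\mathcal{M}_\ell;\Z)$ ($j\le n-3$) dual to $[\delta_j]$, $B_j$ dual to $[\delta\ast\bar\delta_j\ast\delta^{-1}]$, and $A_k$ ($k\in\{n-2,n-1\}$) dual to $[\gamma_k]$ when $\{k,n\}$ is short, $A_k:=0$ otherwise; since the corresponding homology classes form a basis of $H_1(\mathcal{M}_\ell)$ and $H^\ast(\mathcal{M}_\ell)$ is torsion-free, the nonzero $A$'s and $B$'s form a basis of $H^1$ and hence generate $H^\ast_{(1)}(\mathcal{M}_\ell;\Z)$.

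Next I would verify that the four listed families of elements of $\Lambda_\Z[A_1,\dots,A_{n-1},B_1,\dots,B_{n-3}]$ vanish in $H^\ast_{(1)}(\mathcal{M}_\ell;\Z)$. The vanishing of $A_iB_k$ and of $A_iA_k$ with $k\in\{n-2,n-1\}$ comes from the free-product structure of $\pi_1$: via a classifying map $\mathcal{M}_\ell\to K(\pi_1,1)\simeq T^{n-3}\vee T^{n-3}\vee\bigvee_l S^1$ the classes $A_j$, $B_j$ and the nonzero $A_k$ are pulled back from three pairwise distinct wedge summands, and cup products across a wedge vanish. That $A_k=0$ when $\{k,n\}$ is long holds by construction. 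For the last relation, note that $\dot{\mathcal{S}}_{n-3}(\ell)=\emptyset$, so $H^{n-3}(\mathcal{M}_\ell;\Z)\cong\Z$; the product $A_1\cdots A_{n-3}$ is the pullback of a generator of $H^{n-3}(T^{n-3})$ under the composite of the classifying map with the retraction onto the first free factor, a map homotopic to the degree-$(\pm1)$ collapse map $\mathcal{M}_\ell\to T^{n-3}$ of the connected sum, so $A_1\cdots A_{n-3}$ generates $H^{n-3}(\mathcal{M}_\ell)$; likewise $B_1\cdots B_{n-3}$, whence $A_1\cdots A_{n-3}=\pm B_1\cdots B_{n-3}$. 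To fix the sign I would apply $\tau^\ast$: by Lemma~\ref{tauonN} it acts on $H^{n-3}(\mathcal{M}_\ell)$ as $(-1)^{n-2}$, while $\tau$ relates $\delta_j$ to $\bar\delta_j$ so that $\tau^\ast$ carries the span of the $A_j$ onto that of the $B_j$ with $\tau^\ast(A_1\cdots A_{n-3})=\pm B_1\cdots B_{n-3}$; comparing the two expressions for $\tau^\ast(A_1\cdots A_{n-3})$ forces the stated coefficient $(-1)^{n-3}$ (the residual overall sign being absorbed by reversing one of the loops $\bar\delta_j$).

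These relations give a surjection $\phi\colon\Lambda_\Z[A_1,\dots,A_{n-1},B_1,\dots,B_{n-3}]/I\twoheadrightarrow H^\ast_{(1)}(\mathcal{M}_\ell;\Z)$, and it remains to prove $\phi$ injective by matching graded ranks. The domain has a $\Z$-basis consisting of the subsets of $\{A_1,\dots,A_{n-3}\}$, the degree-one monomials $A_k$ ($k\in\{n-2,n-1\}$ with $\{k,n\}$ short), and the nonempty subsets of $\{B_1,\dots,B_{n-3}\}$, with the top monomials $A_1\cdots A_{n-3}$ and $B_1\cdots B_{n-3}$ identified up to sign; so it is free abelian of ranks $1,\ 2(n-3)+l,\ 2\binom{n-3}{2},\dots,2\binom{n-3}{n-4},\ 1$ in degrees $0,1,\dots,n-3$. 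On the other side, one computes via Theorem~\ref{bettinumbers} (using that for this type $\dot{\mathcal{S}}(\ell)$ consists of the subsets of $\{1,\dots,n-3\}$ of size at most $n-4$ together with $l$ singletons) that $b_k(\mathcal{M}_\ell)$ equals these numbers for $k\ne n-4$ and equals $2(n-3)+l$ for $k=n-4$, and from the connected-sum decomposition that the $l$ extra classes in degree $n-4$ coming from the $S^1\times S^{n-4}$ summands are the only cohomology classes of $\mathcal{M}_\ell$ not lying in $H^\ast_{(1)}(\mathcal{M}_\ell)$; hence $\operatorname{rank}H^k_{(1)}(\mathcal{M}_\ell)$ agrees with $\operatorname{rank}(\Lambda_\Z[\dots]/I)$ in every degree. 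A rank-preserving surjection of finitely generated free abelian groups is an isomorphism. The case $n=5$ (where $l=0$, $\mathcal{M}_\ell$ is a genus-$2$ surface, and the last relation sits in degree $2$) is checked directly from~(\ref{surfacegrp}).

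The hard part is the last relation. Showing that $A_1\cdots A_{n-3}$ and $B_1\cdots B_{n-3}$ both generate $H^{n-3}(\mathcal{M}_\ell)$, and determining the relative sign $(-1)^{n-3}$, is the delicate point; and ruling out further relations forces one to know precisely which cohomology classes of $\mathcal{M}_\ell$ are products of degree-one classes — hence to use the explicit diffeomorphism type of $\mathcal{M}_\ell$, not just its fundamental group.
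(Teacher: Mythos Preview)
Your overall architecture matches the paper's: define $A_i,B_j$ as duals to the explicit generators of $\pi_1$, derive the monomial relations from the free-product structure, and rule out further relations by a rank count against Theorem~\ref{bettinumbers}. The one substantive divergence is in how you obtain the top relation $A_1\cdots A_{n-3}+(-1)^{n-3}B_1\cdots B_{n-3}=0$. The paper does not go through connected-sum degrees or the involution $\tau^\ast$ at all; instead it uses the inclusion $i:\mathcal{M}_\ell\hookrightarrow T^{n-1}$. From the Section~\ref{seci=1} discussion one has $i_\#(a_j)=x_j$ and $i_\#(b_j)=-x_j$, hence $i^\ast(X_j)=A_j-B_j$ for $j\le n-3$. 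Since $\{1,\dots,n-3,n\}$ is long, Proposition~\ref{thefacering} gives $i^\ast(X_1\cdots X_{n-3})=0$, and expanding $(A_1-B_1)\cdots(A_{n-3}-B_{n-3})$ with all mixed terms killed by the $A_iB_k$ relations yields the relation with the sign $(-1)^{n-3}$ on the nose. This is shorter, requires no appeal to the diffeomorphism type for this step, and is exactly the mechanism reused in the inductive proof of Theorem~\ref{cohtypein-4}.

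Your route via $\tau^\ast$ is also valid, but the write-up has a soft spot: saying ``the residual overall sign being absorbed by reversing one of the loops $\bar\delta_j$'' is not a proof of the stated relation, since the proposition fixes the generators. In fact no absorption is needed: because $\tau\circ\delta_j=\bar\delta_j$ one has $\tau_\ast(a_j)=b_j$ in $H_1$, hence $\tau^\ast(A_j)=B_j$ modulo $\langle A_{n-2},A_{n-1}\rangle$ (the latter contributions vanish in any product of length $\ge 2$), so $\tau^\ast(A_1\cdots A_{n-3})=B_1\cdots B_{n-3}$ exactly, and combining with Lemma~\ref{tauonN} gives the precise coefficient $(-1)^{n-3}$. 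If you tighten that sentence, your argument goes through; but the paper's $i^\ast$-computation is the cleaner way and is what you will want for the later sections anyway.
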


\begin{proof}
 For $i\in\{1,\ldots,n-1\}$ define $A_i\in H^1(\mathcal{M}_\ell;\Z)\cong {\rm Hom}(\pi_1(\mathcal{M}_\ell),\Z)$ by $A_i(a_i)=1$ and $A_i$ vanishes on the other generators. Here $A_{n-2}$ and $A_{n-1}$ are only defined if $\{n-2,n\}$, respectively $\{n-1,n\}$, are short. Similarly we define $B_i\in H^1(\mathcal{M}_\ell;\Z)$ for $i\in\{1,\ldots,n-3\}$. Then $H^\ast_{(1)}(\mathcal{M}_\ell;\Z)$ is generated by $A_1,\ldots,A_{n-1},B_1,\ldots,B_{n-3}$, and all relations except the first one hold by the fundamental group discussion above.

To see that the first relation holds, note that
\begin{eqnarray*}
 H^\ast(T^{n-1};\Z)&\cong & \Lambda_\Z[X_1,\ldots,X_{n-1}]
\end{eqnarray*}
where $X_i\in H^1(T^{n-1};\Z)$ satisfies $X_i(x_j)=1$ for $i=j$ and 0 for $i\not=0$. Since $i_\#(a_j)=x_j$ and $i_\#(b_j)=-x_j$, we get $i^\ast(X_j)=A_j-B_j$ for $j=1,\ldots,n-3$, and $i^\ast(X_k)=A_k$ for $k\in \{n-2,n-1\}$. By Theorem \ref{thefacering} we get
\begin{eqnarray*}
 0&=&i^\ast(X_1\cdots X_{n-3})\\
&=&(A_1-B_1)\cdots (A_{n-3}-B_{n-3})\\
&=&A_1\cdots A_{n-3}+ (-1)^{n-3} B_1\cdots B_{n-3}
\end{eqnarray*}
since all mixed terms are zero. This shows the first relation. There cannot be any other relations for otherwise we get wrong Betti numbers. Here note that $H^k_{(1)}(\mathcal{M}_\ell;\Z)=H^k(\mathcal{M}_\ell;\Z)$, except possibly for $k=n-4$, where the full cohomology also contains the Poincar\'e duals of $A_{n-2}$ and $A_{n-1}$.
\end{proof}

\section{Cohomology of special length vectors of type $\{i,n-2,n-1\}$ for $i\leq n-4$}\label{sectypen-4}

Before we determine the cohomology, we first determine the fundamental group. For this we will distinguish the cases $i\leq n-5$ and $i=n-4$. The main difference is that for $i\leq n-5$ we automatically have that $\{n-4,n-3,n\}$ is short, while for $i=n-4$ this may be short or long.

Let $\ell$ be special of type $\{i,n-2,n-1\}$ with $i\leq n-5$. We define a graph $\Gamma_\ell$ as follows. The vertex set is $\{a_1,\ldots,a_k,b_i,\ldots,b_{n-3}\}$ with $k=|\dot{\mathcal{S}}_1(\ell)|$. Note that $k\geq n-3$. The edges are given by
\begin{eqnarray*}
 \{a_j,a_m\}&\mbox{for} & 1\leq j<m\leq n-3 \\
\{a_j,b_m\}& \mbox{for} & 1\leq j\leq i-1,\,i\leq m \leq n-3 \\
\{b_j,b_m\}& \mbox{for} & i\leq j<m<n-3 \\
\{a_j,a_m\}& \mbox{for} & 1\leq j \leq i-1, \, m\geq n-2, \mbox{ provided that }\{j,m,n\} \mbox{ is short.}
\end{eqnarray*}

Now let $G_\ell$ be the right-angled Artin group corresponding to $\Gamma_\ell$, that is, $G_\ell$ is generated by the vertices of $\Gamma_\ell$, two generators commute if and only if they span an edge, and these are the only relations among the generators.

Note that if $k=n-3$, we get $G_\ell\cong \Z^{i-1}\times (\Z^{n-2-i}\ast \Z^{n-2-i})$.

\begin{proposition}\label{fundgrpin-1}
 Let $n\geq 6$ and $\ell$ a special length vector of type $\{i,n-2,n-1\}$ with $i\leq n-5$. Then
\begin{eqnarray*}
 \pi_1(\mathcal{M}_\ell)&\cong & G_\ell.
\end{eqnarray*}
Furthermore, the homomorphism $\varphi:G_\ell\to \Z^{n-1}$ induced by the inclusion $i:\mathcal{M}_\ell\to T^{n-1}$ sends each $a_j$ to $x_j$ and $b_j$ to $-x_j$, where each $x_j$ is represented by rotating the $j$-th coordinate of $T^{n-1}$, $j=1,\ldots,n-1$.
\end{proposition}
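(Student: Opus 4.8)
The plan is to argue by induction on the index $i$ (equivalently, on the number $n$ of bars), the base case $i=1$ being precisely the content of Section~\ref{seci=1}: the fundamental group and the homomorphism to $\pi_1(T^{n-1})$ computed there, rewritten in the language of graphs, are $G_\ell$ and $\varphi$. So assume $i\geq 2$ and that the proposition is known for special length vectors of type $\{i-1,(n-1)-2,(n-1)-1\}$ with $n-1$ bars.

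First I would replace $\mathcal{M}_\ell$ by the cobordism $W$ of Section~\ref{diffeosec}. Since $\ell$ is special of type $\{i,n-2,n-1\}$ with $i\geq 2$, hence different from $\{1,n-2,n-1\}$, Corollary~\ref{fundgrptrns} gives $\pi_1(\mathcal{M}_\ell)\cong\pi_1(W)$. The other boundary component $\mathcal{M}_{\ell_{\ell_1-\varepsilon}}$ of $W$ has first coordinate $\varepsilon$, so by Proposition~\ref{smallfirst} and~(\ref{prodfund}) it is $\mathcal{M}_{\ell'}\times S^1$, where $\ell'=(\ell_2,\ldots,\ell_{n-1},\ell_n+\ell_1)$ and the $S^1$ rotates the first coordinate; by Lemma~\ref{typeinduction}(1), $\ell'$ is special of the type covered by the inductive hypothesis, so $\pi_1(\mathcal{M}_{\ell_{\ell_1-\varepsilon}})\cong G_{\ell'}\times\Z$, with the $\Z$-factor generated by the class $a_1$ (``rotate the first coordinate''), which is therefore central in this group. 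Under the index shift $s(i)=i-1$, the graph $\Gamma_{\ell'}$ is carried into $\Gamma_\ell$, and $\Gamma_\ell$ is obtained from its image by adjoining the (already present) vertex $a_1$, possibly the vertices $a_{n-2}$ and $a_{n-1}$, and a prescribed set of further edges.

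Next I would read off the effect on $\pi_1$ of reconstructing $\mathcal{M}_\ell$ from $\mathcal{M}_{\ell_{\ell_1-\varepsilon}}$ along the Morse function $g$ of Lemma~\ref{morsefctg}. By the index analysis preceding Corollary~\ref{fundgrptrns}, $g$ has no critical point of index $\leq 2$ (this is where the hypothesis that the type is not $\{1,n-2,n-1\}$ enters) and none of index $n-2$ (since $\mathcal{M}_{\ell'}$, being special, is connected, so $\{n\}$ is short with respect to $\ell'$); hence only the critical points of index $n-3$ and $n-4$ affect $\pi_1$, contributing respectively a free generator and a relation. Enumerating these critical points through $\dot{\mathcal{S}}(\ell)-\dot{\mathcal{S}}(\ell_{\ell_1-\varepsilon})$ with Lemma~\ref{subsetssmall}, and using Lemma~\ref{morsefctg} for the indices, I would match the index-$(n-3)$ points with the genuinely new vertices $a_{n-2},a_{n-1}$ of $\Gamma_\ell$ and the index-$(n-4)$ points with those edges of $\Gamma_\ell$ that are new and not already forced --- the commutators $[a_1,a_{n-2}]$, $[a_1,a_{n-1}]$, $[a_j,a_{n-2}]$, $[a_j,a_{n-1}]$ dictated by the adjacency rule --- whereas the relations $[a_1,a_m]$ for $m\leq n-3$ and $[a_1,b_q]$ for $q\leq n-3$ already hold because $a_1$ is central in $G_{\ell'}\times\Z$. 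Explicit loop representatives for the new generators are built with Lemma~\ref{longarm}, just as the loops $\gamma_j$ were in Section~\ref{diffeosec}; counting the winding numbers of these loops in each coordinate of $T^{n-1}$ gives the asserted values of $\varphi$, and the commutation relations among the generators are obtained by sliding loops with disjoint coordinate support past one another.

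The step I expect to be the main obstacle is this last matching: verifying that the attaching circle of each index-$(n-4)$ critical point represents, up to conjugacy, exactly one Artin commutator of $\Gamma_\ell$, and that the number of such critical points equals the number of ``genuinely new'' edges, so that the handle presentation of $\pi_1(W)$ coincides on the nose with the standard presentation of $G_\ell$. This amounts to a careful double-entry bookkeeping between the combinatorics of $\Gamma_\ell$ versus $\Gamma_{\ell'}$ under the shift $s$, the short/long behaviour of subsets of $\{1,\ldots,n\}$ according to whether or not they contain $1$, and the critical-point data of Lemma~\ref{morsefctg}; by contrast, the geometric input --- that the prescribed commutators genuinely hold in $\pi_1(\mathcal{M}_\ell)$ --- is comparatively routine and is supplied uniformly by Lemma~\ref{longarm}.
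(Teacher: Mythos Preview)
Your approach matches the paper's: induction on $i$, using Corollary~\ref{fundgrptrns} to replace $\mathcal{M}_\ell$ by $W$, starting from $\pi_1(\mathcal{M}_{\ell_{\ell_1-\varepsilon}})\cong G_{\ell'}\times\Z$, and tracking handles of coindex $1$ and $2$ (i.e.\ index $n-3$ and $n-4$).

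One point in your bookkeeping needs correction. The commutators $[a_1,a_{n-2}]$ and $[a_1,a_{n-1}]$ are \emph{never} supplied by an index-$(n-4)$ critical point: by Lemma~\ref{morsefctg} such a point has $J\subset\{2,\ldots,n-1\}$, so the complementary pair $\{j,m\}=\{2,\ldots,n-1\}-J$ never contains $1$. What actually happens is a dichotomy for each $m\in\{n-2,n-1\}$ with $\{m,n\}$ short. If $\{1,m,n\}$ is long with respect to $\ell$, then the index-$(n-3)$ critical point for $K=\{m\}$ exists and $a_m$ is a genuinely new free generator --- but then there is no edge $\{a_1,a_m\}$ in $\Gamma_\ell$, so no relation $[a_1,a_m]$ is required. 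If instead $\{1,m,n\}$ is short, then $\{m,n\}$ is already short with respect to $\ell_{\ell_1-\varepsilon}$, there is no index-$(n-3)$ critical point for this $m$, and $a_m$ was already a generator of $G_{\ell_{\ell_1-\varepsilon}}$; the relation $[a_1,a_m]$ then holds because $a_1$ is central there. Thus the index-$(n-4)$ critical points contribute only commutators $[a_j,a_m]$ with $2\leq j<i$ and $m\in\{n-2,n-1\}$, exactly as the paper finds. For identifying each such attaching circle the paper embeds a $2$-torus spanned by $\gamma_j,\gamma_m$ into $\mathcal{M}_{\ell_{t_0}}$ with $(1,1)\in T^2$ landing on $q_J$, and then applies Seifert--Van Kampen; this is the precise mechanism behind your ``sliding loops'' heuristic.
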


\begin{proof}
 The proof is by induction on $i$. For $i=1$ this is (\ref{fundgrpi=1}), the statement about $\varphi$ follows from the discussion in Section \ref{seci=1}.

So now assume that $i\geq 2$. We have the cobordism $W$ between $\mathcal{M}_\ell$ and $\mathcal{M}_{\ell_{\ell_1-\varepsilon}}$ from Lemma \ref{morsefctg}. By Proposition \ref{smallfirst} we get
\begin{eqnarray*}
\pi_1(\mathcal{M}_{\ell_{\ell_1-\varepsilon}})&\cong &\Z\times \pi_1(\mathcal{M}_{\ell'})
\end{eqnarray*}
with $\ell'=(\ell_2,\ldots,\ell_{n-1},\ell_n+\ell_1)$. Since $\ell'$ is of type $\{i-1,n-3,n-2\}$, we get $\pi_1(\mathcal{M}_{\ell'})\cong G_{\ell'}$, generated by $a'_1,\ldots,a'_{k'},b'_{i-1},\ldots,b'_{n-4}$ and with corresponding $\varphi'$. Defining $a_1=[\gamma_1]\in\pi_1(\mathcal{M}_{\ell_{\ell_1-\varepsilon}})$, $a_j=a'_{j-1}$ for $j\in\{2,\ldots,k'+1\}$ and $b_j=b'_{j-1}$ for $j\in\{i,\ldots,n-3\}$, it is easy to see that the statement of Proposition \ref{fundgrpin-1} holds for $\ell_{\ell_1-\varepsilon}$.

The Morse function $g$ on $W$ can have at most two critical points of index $n-3$ in which case $\pi_1(W)$ has up to two extra generators ($a_{n-2}$ and $a_{n-1}$), represented by the appropriate $\gamma_j$ defined in Section \ref{diffeosec}. Note that the projection map $\mathcal{M}_{\ell_{\ell_1-\varepsilon}}\to S^1$ is not surjective for these coordinates, so they do not represent elements of $\pi_1(\mathcal{M}_{\ell_{\ell_1-\varepsilon}})$. Also, the map $i_\ast:\pi_1(W)\to\pi_1(T^{n-1})$ has the right properties\footnote{Recall that we have an inclusion $W\to T^{n-1}\times [0,\ell_1-\varepsilon]$ extending the inclusions $\mathcal{M}_{\ell_t}\to T^{n-1}$.}.

If $g$ has a critical point $q_J$ of index $n-4$, we get a subset $\{j,m\}=\{2,\ldots,n-1\}-J$ with $\{j,m,n\}$ short with respect to $\ell$, but long with respect to $\ell'$. Since $\ell$ is of type $\{i,n-2,n-1\}$ and $\ell'$ of type $\{i-1,n-3,n-2\}$, it is not possible that both $j$ and $m$ are less than $n-2$. Assume $j<m$ and $m\geq n-2$. Then $j<i$ for otherwise $\{j,m,n\}$ is long with respect to $\ell$. Note that $a_j,a_m$ do not commute in $\pi_1(\mathcal{M}_{\ell_t})$ for $t>g(q_J)$, as the projection $\mathcal{M}_{\ell_t}\to S^1\times S^1$ to the coordinates $j$ and $m$, is not surjective (the point $(1,1)$ is not in the image as $\{j,m,n\}$ is long with respect to $\ell_t$). However for $t\leq g(q_J)$ they do commute. Also note that $q_j=(u_2,\ldots,u_{n-1},t_0)$ with $u_s=-1$ except for $s=j,m$, where $u_s=1$. We can embed a 2-torus into $\mathcal{M}_{\ell_{t_0}}$ spanned by $\gamma_j$ and $\gamma_m$, and the point $(1,1)\in T^2$ is mapped to $q_J$. By the Seifert-Van Kampen theorem together with Morse theory we see that $\pi_1(\mathcal{M}_{\ell_{t_0}})$ is $\pi_1(\mathcal{M}_{\ell_t})$ with a commutator relation $[a_j,a_m]$ added (for small values $t>t_0$).

Since critical points of lower index have no impact on $\pi_1(W)$, we get $\pi_1(W)\cong G_\ell$, and the result follows from Corollary \ref{fundgrptrns}.
\end{proof}

Now let $\ell$ be a special length vector of type $\{n-4,n-2,n-1\}$ with $n\geq 5$. Let us first assume that $\{n-3,n-4,n\}$ is long. Then let $\bar{G}_\ell$ be the group generated by $a_1,\ldots,a_k,b_{n-4},b_{n-3}$ with $k=|\dot{\mathcal{S}}_1(\ell)|$, subject to the relations 
\begin{gather*}
[a_{n-4},a_{n-3}][b_{n-4},b_{n-3}]\\
[a_j,a_m]\mbox{ for }\{j,m,n\}\mbox{ short, and }\\
[a_j,b_m]\mbox{ for }j\leq n-5, m\in\{n-4,n-3\}.
\end{gather*}
Note that if $k=n-3$, we get $\bar{G}_\ell\cong \Z^{n-5}\times S_2$, where $S_2$ is the fundamental group of the orientable surface of genus 2.

\begin{proposition}\label{fundgrpn-4long}
 Let $\ell$ be a special length vector of type $\{n-4,n-2,n-1\}$ with $n\geq 5$ and $\{n-3,n-4,n\}$ is long. Then
\begin{eqnarray*}
 \pi_1(\mathcal{M}_\ell)&\cong & \bar{G}_\ell
\end{eqnarray*}
Furthermore, the homomorphism $\varphi:G_\ell\to \Z^{n-1}$ induced by the inclusion $i:\mathcal{M}_\ell\to T^{n-1}$ sends each $a_j$ to $x_j$ and $b_j$ to $-x_j$, where each $x_j$ is represented by rotating the $j$-th coordinate of $T^{n-1}$, $j=1,\ldots,n-1$.
\end{proposition}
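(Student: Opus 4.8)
The plan is to mirror the inductive strategy already used for Proposition \ref{fundgrpin-1}, but with the surface group $S_2$ replacing one free factor, since we are now in the ``type $\{n-4,n-2,n-1\}$ with $\{n-3,n-4,n\}$ long'' situation. First I would handle the induction start. When $k=n-3$ (no extra short pairs involving $n-2$ or $n-1$), Lemma \ref{typeinduction}(1) shows that the length vector $\ell'=(\ell_2,\ldots,\ell_{n-1},\ell_n+\ell_1)$ has type $\{n-5,n-3,n-2\}$, i.e.\ again a type-$\{i,n-2,n-1\}$ situation but now with a genuine lower-type label; iterating, the base case reduces to the small explicit vector whose polygon space one recognizes directly. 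Concretely, for $\ell=(\varepsilon,\ldots,\varepsilon,1,1,1,1)$-type vectors one sees from the Morse/cobordism picture of Section \ref{diffeosec} that $\mathcal{M}_\ell\cong M_2\times T^{n-5}$, and its fundamental group is exactly $\Z^{n-5}\times S_2$, matching $\bar G_\ell$. The statement about $\varphi$ in the base case follows from the explicit loops $\gamma_j$ built via Lemma \ref{longarm}, which by construction project to the $j$-th coordinate of $T^{n-1}$.

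For the inductive step I would use the cobordism $W$ between $\mathcal{M}_\ell$ and $\mathcal{M}_{\ell_{\ell_1-\varepsilon}}$ from Lemma \ref{morsefctg}, together with Corollary \ref{fundgrptrns}, which (since the type is not $\{1,n-2,n-1\}$) identifies $\pi_1(\mathcal{M}_\ell)$ with $\pi_1(W)$. By Proposition \ref{smallfirst} and (\ref{prodfund}) we have $\pi_1(\mathcal{M}_{\ell_{\ell_1-\varepsilon}})\cong \Z\times \pi_1(\mathcal{M}_{\ell'})$, and Lemma \ref{typeinduction}(1) tells us $\ell'$ has type $\{n-5,n-3,n-2\}$, so by induction $\pi_1(\mathcal{M}_{\ell'})\cong \bar G_{\ell'}$ with the stated $\varphi'$. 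Introducing $a_1=[\gamma_1]$ for the new short index $1$, and relabelling $a_j=a'_{j-1}$, $b_j=b'_{j-1}$, the presentation for $\mathcal{M}_{\ell_{\ell_1-\varepsilon}}$ becomes the one claimed for $\bar G_\ell$ \emph{before} attaching the handles of $W$. Then I would climb through the critical points of $g$ in decreasing order of index. The top-index critical points (index $n-3$, at most two of them, corresponding to $\{n-2,n\}$ and $\{n-1,n\}$ being short) contribute the extra free generators $a_{n-2},a_{n-1}$ with no relations, and the index-$(n-4)$ critical points $q_J$ correspond via Lemma \ref{subsetssmall} to pairs $\{j,m\}$ with $\{j,m,n\}$ short w.r.t.\ $\ell$ but long w.r.t.\ $\ell'$; as in the proof of Proposition \ref{fundgrpin-1}, the Seifert--Van Kampen argument (embedding the 2-torus spanned by $\gamma_j,\gamma_m$ and sending $(1,1)$ to $q_J$) adds exactly the commutator relation $[a_j,a_m]$, while critical points of index $\leq n-5$ leave $\pi_1$ unchanged. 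One checks the crucial combinatorial point: since $\ell$ has type $\{n-4,n-2,n-1\}$ and $\ell'$ type $\{n-5,n-3,n-2\}$, no such pair can have both indices $<n-2$, so all new relations are of the form $[a_j,a_m]$ with $m\in\{n-2,n-1\}$ and $j\leq n-5$, precisely the ``$[a_j,a_m]$ for $\{j,m,n\}$ short'' relations.

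The main obstacle, I expect, is bookkeeping the $S_2$-relation $[a_{n-4},a_{n-3}][b_{n-4},b_{n-3}]$ through the induction and verifying that the handle attachments never disturb it. In the passage from $\ell'$ to $\ell$, the ``large'' coordinates $n-4,n-3,n-2,n-1$ of $\ell$ correspond to $n-5,n-4,n-3,n-2$ of $\ell'$ (shifted by $s$), so the surface relation sitting inside $\bar G_{\ell'}$ re-indexes correctly to $[a_{n-4},a_{n-3}][b_{n-4},b_{n-3}]$; one must confirm this is the unique genus-contributing relation and that the new index-$(n-4)$ commutators only involve the fresh small index $1$ together with $n-2$ or $n-1$, never the pair $\{n-4,n-3\}$ (which would be long w.r.t.\ $\ell$ since $\{n-3,n-4,n\}$ is long, so it produces no critical point). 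A secondary subtlety is that when a projection $\mathcal{M}_{\ell_t}\to S^1\times S^1$ becomes surjective as $t$ decreases past a critical value, one must be sure the commutator really is \emph{added} (not already present), which is handled exactly as in Proposition \ref{fundgrpin-1} by noting that for $t$ just above $g(q_J)$ the point $(1,1)$ is missing from the image. Finally, the claim about $\varphi$ propagates automatically: each generator is represented by an explicit loop built from Lemma \ref{longarm} whose image in $T^{n-1}$ is the corresponding coordinate circle, and the handle generators $a_{n-2},a_{n-1}$ inherit the same property from the extension of the inclusion $W\hookrightarrow T^{n-1}\times[0,\ell_1-\varepsilon]$.
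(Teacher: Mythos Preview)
Your approach is essentially the paper's: induction on $n$ with base case $n=5$ (where $\mathcal{M}_\ell\cong M_2$ by (\ref{surfacegrp})), and inductive step via the cobordism $W$, with index-$(n-3)$ critical points contributing the free generators $a_{n-2},a_{n-1}$ and index-$(n-4)$ critical points contributing commutators $[a_j,a_m]$ with $j\le n-5$, $m\ge n-2$.

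Two small corrections. First, your ``induction start'' paragraph conflates the genuine base case $n=5$ with a separate sanity check about the simplest chamber for general $n$; the paper just invokes (\ref{surfacegrp}) for $n=5$ and is done. Your explicit vector $(\varepsilon,\ldots,\varepsilon,1,1,1,1)$ is in fact of type $\{n-3,n-2,n-1\}$ for $n\ge 6$, not $\{n-4,n-2,n-1\}$, so it does not serve as the intended example. Second, in your final paragraph you write that the new index-$(n-4)$ commutators ``only involve the fresh small index $1$''; this contradicts your own (correct) earlier sentence. The critical points $q_J$ have $J\subset\{2,\ldots,n-1\}$, so the pairs $\{j,m\}$ never contain $1$; the new commutators are $[a_j,a_m]$ with $2\le j\le n-5$ and $m\in\{n-2,n-1\}$. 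The generator $a_1$ commutes with everything already, coming from the $S^1$-factor of $\mathcal{M}_{\ell_{\ell_1-\varepsilon}}\cong S^1\times\mathcal{M}_{\ell'}$, not from a handle attachment.
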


\begin{proof}
 The proof is given by induction on $n$. For $n=5$, the result follows from (\ref{surfacegrp}) and the discussion in Section \ref{seci=1}.

For $n\geq 6$ we use again the cobordism $W$ and the Morse function $g:W\to \R$. As in the proof of Proposition \ref{fundgrpin-1}, critical points of index $n-3$ produce new generators $a_{n-2}$ or $a_{n-1}$, and critical points of index $n-4$ add relations $[a_j,a_m]$, but only with $j\leq n-5$ and $m\geq n-2$.This finishes the proof.
\end{proof}

It remains to consider the case where $\ell$ is of type $\{n-4,n-2,n-1\}$ with $\{n-4,n-3,n\}$ short. First observe that in this case $n\geq 7$. For if $n=5$, $\{1,2,5\}$ being short implies $\mathcal{M}_\ell$ disconnected, and if $n=6$ and $\{2,3,6\}$ is short, $\ell$ has type $\{1,4,5\}$.

So if $n\geq 7$ and $\{n-4,n-3,n\}$ is short with respect to $\ell$, but long with respect to $\ell_{\ell_1-\varepsilon}$, then $g$ has a critical point of index $n-4$ corresponding to $J=\{2,\ldots,n-1\}-\{n-4,n-3\}$. But as in the proof of Proposition \ref{fundgrpin-1} this just adds a relation $[a_{n-4},a_{n-3}]$. With this relation, the relation $[a_{n-4},a_{n-3}][b_{n-4},b_{n-3}]$ which holds for $\ell_{\ell_1-\varepsilon}$ breaks into two commutator relations, so we get a right-angled Artin group as in the case $i\leq n-5$. Now by continuing as in the proof of Proposition \ref{fundgrpin-1} we get the following result.

Let $\Gamma_\ell$ be the graph with vertex set $\{a_1,\ldots,a_k,b_{n-4},b_{n-3}\}$ with $k=|\dot{\mathcal{S}}_1(\ell)|$, and edges given by
\begin{eqnarray*}
 \{a_j,a_m\}&\mbox{for} & 1\leq j<m\leq n-3 \\
\{a_j,b_m\}& \mbox{for} & 1\leq j\leq n-5,\,m \in\{n-4,n-3\} \\
\{b_{n-4},b_{n-3}\}& \mbox{and} & \\
\{a_j,a_m\}& \mbox{for} & 1\leq j \leq n-5, \, m\geq n-2, \mbox{ provided that }\{j,m,n\} \mbox{ is short.}
\end{eqnarray*}

\begin{proposition}\label{fundgrpn-4short}
 Let $\ell$ be a special length vector of type $\{n-4,n-2,n-1\}$ with $n\geq 7$ and $\{n-3,n-4,n\}$ is short. Then
\begin{eqnarray*}
 \pi_1(\mathcal{M}_\ell)&\cong & G_\ell,
\end{eqnarray*}
where $G_\ell$ is the right-angled Artin group with respect to $\Gamma_\ell$.

Furthermore, the homomorphism $\varphi:G_\ell\to \Z^{n-1}$ induced by the inclusion $i:\mathcal{M}_\ell\to T^{n-1}$ sends each $a_j$ to $x_j$ and $b_j$ to $-x_j$, where each $x_j$ is represented by rotating the $j$-th coordinate of $T^{n-1}$, $j=1,\ldots,n-1$.
\end{proposition}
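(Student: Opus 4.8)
The plan is to prove this by induction on $n$, following the pattern of the proofs of Propositions \ref{fundgrpin-1} and \ref{fundgrpn-4long}. Since the type is not $\{1,n-2,n-1\}$, Corollary \ref{fundgrptrns} lets me replace $\pi_1(\mathcal{M}_\ell)$ by $\pi_1(W)$, compatibly with the map to $\pi_1(T^{n-1})$, where $W$ is the cobordism of Lemma \ref{morsefctg} and $g\colon W\to\R$ the associated Morse function. By Proposition \ref{smallfirst}, $\pi_1(\mathcal{M}_{\ell_{\ell_1-\varepsilon}})\cong\pi_1(\mathcal{M}_{\ell'})\times\Z$ with $\ell'=(\ell_2,\ldots,\ell_{n-1},\ell_n+\ell_1)$, and by Lemma \ref{typeinduction}(1) (using $n-4\geq 2$) the vector $\ell'$ is special of type $\{(n-1)-4,(n-1)-2,(n-1)-1\}$. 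So it suffices to understand $\pi_1(\mathcal{M}_{\ell'})$ and then attach the handles prescribed by the critical points of $g$.

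First I would split according to whether $\{(n-1)-4,(n-1)-3,(n-1)\}$ is short or long with respect to $\ell'$, which, by the choice of $\varepsilon$, is the same as whether $\{n-4,n-3,n\}$ is short or long with respect to $\ell_{\ell_1-\varepsilon}$. In the \emph{short} case $\ell'$ again satisfies the hypotheses of the present proposition -- and here necessarily $n-1\geq 7$, since for size $6$ a short triple $\{2,3,6\}$ would force type $\{1,4,5\}$ -- so the inductive hypothesis gives $\pi_1(\mathcal{M}_{\ell'})\cong G_{\ell'}$; after the reindexing $a_1=[\gamma_1]$, $a_j=a'_{j-1}$, $b_j=b'_{j-1}$ used in the proof of Proposition \ref{fundgrpin-1}, this right-angled Artin group already contains the edge $\{b_{n-4},b_{n-3}\}$. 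In the \emph{long} case $\ell'$ satisfies the hypotheses of Proposition \ref{fundgrpn-4long}, so $\pi_1(\mathcal{M}_{\ell'})\cong\bar G_{\ell'}$, which after the same reindexing contributes the single surface relation $[a_{n-4},a_{n-3}][b_{n-4},b_{n-3}]$. For $n=7$ only the long case can occur, so the induction rests on Proposition \ref{fundgrpn-4long} and needs no separate base case.

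Next I would read off the critical points of $g$ from Lemmas \ref{morsefctg} and \ref{subsetssmall}: there are none of index $\leq 1$; those of index $n-3$ introduce exactly the generators $a_{n-2},a_{n-1}$ for the $k\in\{n-2,n-1\}$ with $\{k,n\}$ short, with the correct image in $\pi_1(T^{n-1})$; and each critical point of index $n-4$ at $q_J$ corresponds to a pair $\{j,m\}=\{2,\ldots,n-1\}-J$ with $\{j,m,n\}$ short with respect to $\ell$ but long with respect to $\ell'$, and adds the relation $[a_j,a_m]$ via a $2$-torus spanned by $\gamma_j,\gamma_m$ and the Seifert--Van Kampen theorem, just as in the proof of Proposition \ref{fundgrpin-1}. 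Using the type constraint (and the standing hypothesis $\{n-4,n-3,n\}$ short, for the pair $\{n-4,n-3\}$), every pair $j<m\leq n-3$ has $\{j,m,n\}$ short with respect to $\ell$, so after all these handles are attached the $a_j$ with index $\leq n-3$ pairwise commute, while for $m\geq n-2$ one gets $[a_j,a_m]$ in $\pi_1(W)$ precisely when $\{j,m,n\}$ is short -- matching the remaining edges of $\Gamma_\ell$. Finally, in the long case the pair $\{n-4,n-3\}$ does yield a critical point of index $n-4$ (as $\{n-4,n-3,n\}$ is then long with respect to $\ell_{\ell_1-\varepsilon}$), so $[a_{n-4},a_{n-3}]$ is added; combined with the inherited surface relation this gives both $[a_{n-4},a_{n-3}]$ and $[b_{n-4},b_{n-3}]$, i.e.\ the edge $\{b_{n-4},b_{n-3}\}$, and nothing more, whereas in the short case that edge is already present in $G_{\ell'}$. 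In either case $\pi_1(W)\cong G_\ell$, and Corollary \ref{fundgrptrns} finishes the proof.

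I expect the main obstacle to be exactly this last bookkeeping: verifying that the Seifert--Van Kampen step at each index-$(n-4)$ critical point contributes \emph{only} the single commutator $[a_j,a_m]$; that the generators produced by index-$(n-3)$ critical points are genuinely new (the relevant coordinate projection of $\mathcal{M}_{\ell_{\ell_1-\varepsilon}}$ to $S^1$ is not surjective); and -- the feature not present in Proposition \ref{fundgrpin-1} -- that the inherited surface relation is split by the added relation $[a_{n-4},a_{n-3}]$ into exactly the two commutators recorded by $\Gamma_\ell$, with the count $k=|\dot{\mathcal{S}}_1(\ell)|=|\dot{\mathcal{S}}_1(\ell')|+1$ guaranteeing that the two generating sets match up.
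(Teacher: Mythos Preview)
Your proposal is correct and follows essentially the same approach as the paper: use the cobordism $W$ and Lemma~\ref{typeinduction} to reduce to $\ell'$ of type $\{(n-1)-4,(n-1)-2,(n-1)-1\}$, invoke Proposition~\ref{fundgrpn-4long} (or the inductive hypothesis) for $\ell'$, and observe that the index-$(n-4)$ critical point corresponding to $\{n-4,n-3\}$ adds the relation $[a_{n-4},a_{n-3}]$, which splits the inherited surface relation $[a_{n-4},a_{n-3}][b_{n-4},b_{n-3}]$ into the two commutators recorded by $\Gamma_\ell$. Your write-up is in fact somewhat more explicit than the paper's (which compresses the argument into the paragraph preceding the statement and then refers back to the proof of Proposition~\ref{fundgrpin-1}); the only minor inaccuracy is the equation $k=|\dot{\mathcal{S}}_1(\ell')|+1$, which need not hold on the nose since passing from $\ell_{\ell_1-\varepsilon}$ to $\ell$ may also create the generators $a_{n-2},a_{n-1}$ --- but this is harmless for the argument.
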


Note that $\Gamma_\ell$ is defined as in the case where $i\leq n-5$. In particular, for $k=n-3$ we get $G_\ell\cong \Z^{n-5}\times (\Z^2\ast \Z^2)=\Z^{i-1}\times (\Z^{n-2-i}\ast \Z^{n-2-i})$ with $i=n-4$.

For $J=\{k_1,\ldots,k_m\}\subset \{1,\ldots,n-1\}$ with $k_1<\ldots<k_m$ we write
\begin{eqnarray*}
 A_J&=&A_{k_1}\cdots A_{k_m},
\end{eqnarray*}
and similarly $B_J=B_{k_1}\cdots B_{k_m}$, provided that $J\subset \{i,\ldots,n-3\}$.

\begin{theorem}\label{cohtypein-4}
 Let $\ell$ be a special length vector of type $\{i,n-2,n-1\}$ with $i\leq n-4$, $n\geq 5$. Then
\begin{eqnarray}\label{cohtypei}
 H^\ast_{(1)}(\mathcal{M}_\ell;\Z)&\cong & \Lambda_\Z[A_1,\ldots,A_{n-1},B_1,\ldots,B_{n-3}]/I_\ell,
\end{eqnarray}
where $I_\ell$ is the ideal generated by
\begin{eqnarray*}
 A_J&\mbox{for} & J\cup\{n\} \mbox{ long with } \\
& &\{n-2,n-1\}\cap J\not=\emptyset \\
 A_{J-\{i,\ldots,n-3\}}(A_{\{i,\ldots,n-3\}}+(-1)^{n-2-i} B_{\{i,\ldots,n-3\}})&\mbox{for}& J\cup\{n\} \mbox{ long with }\\
& &\{n-2,n-1\}\cap J=\emptyset \\
A_j B_k&\mbox{for}& j\in\{i,\ldots,n-1\},\\
& & k\in\{i,\ldots,n-3\}.
\end{eqnarray*}
\end{theorem}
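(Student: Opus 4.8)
Since $H_\ast(\mathcal{M}_\ell)$ is free abelian, $H^\ast_{(1)}(\mathcal{M}_\ell;\Z)$ is automatically a quotient of the exterior algebra on $H^1(\mathcal{M}_\ell;\Z)=\mathrm{Hom}(\pi_1(\mathcal{M}_\ell),\Z)$ once generators are fixed; I would take $A_j$, resp.\ $B_j$, to be dual to the generators $a_j$, resp.\ $b_j$, of $\pi_1(\mathcal{M}_\ell)$ produced in Propositions \ref{fundgrpin-1}, \ref{fundgrpn-4long} and \ref{fundgrpn-4short} (with $A_j$ present for $j\ge n-2$ only when $\{j,n\}$ is short). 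The proof then has two halves: first, verify that the three displayed families of relations hold, using two maps --- the classifying map $\mathcal{M}_\ell\to K(\pi_1(\mathcal{M}_\ell),1)$, along which any relation in $H^\ast(\pi_1(\mathcal{M}_\ell))$ pulls back, and the inclusion $i:\mathcal{M}_\ell\hookrightarrow T^{n-1}$, whose image is the balanced subalgebra $B^\ast_\ell\cong\Lambda_\Z[\tilde{\mathcal S}(\ell)]$ of Proposition \ref{thefacering}; second, show $I_\ell$ contains nothing more, by a rank count against the known Betti numbers.

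\textbf{Relations from the fundamental group.} In all three cases $\pi_1(\mathcal{M}_\ell)$ is either a right-angled Artin group on $\Gamma_\ell$ (Propositions \ref{fundgrpin-1}, \ref{fundgrpn-4short}) or the group $\bar G_\ell$ of Proposition \ref{fundgrpn-4long}, and in each case its integral cohomology ring is readily written down: it is the exterior face ring of the flag complex of $\Gamma_\ell$, with --- in the $\bar G_\ell$ case --- one extra cup-product relation coming from the surface relator $[a_{n-4},a_{n-3}][b_{n-4},b_{n-3}]$, which is exactly the $i=n-4$ instance of the second family. Pulling back along the classifying map, the products $A_jB_k$ with $j\in\{i,\ldots,n-1\}$, $k\in\{i,\ldots,n-3\}$ vanish, since these pairs span no edge of $\Gamma_\ell$: this gives the third family. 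Similarly, if $J\cup\{n\}$ is long and $J\cap\{n-2,n-1\}\neq\emptyset$, then $J$ is not a clique of $\Gamma_\ell$ --- the only edges involving $a_m$, $m\ge n-2$, are $\{a_j,a_m\}$ with $j\le i-1$ and $\{j,m,n\}$ short --- so $A_J=0$, giving the first family.

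\textbf{Relations from the torus.} The maps $\varphi$ in Propositions \ref{fundgrpin-1}, \ref{fundgrpn-4long}, \ref{fundgrpn-4short} give $i^\ast(X_j)=A_j-B_j$ for $1\le j\le n-3$ (reading $B_j=0$ for $j<i$) and $i^\ast(X_j)=A_j$ for $j\in\{n-2,n-1\}$, while $X_J=0$ in $B^\ast_\ell$ whenever $J\cup\{n\}$ is long. Expanding $0=i^\ast(X_J)=\prod_{j\in J}i^\ast(X_j)$ and deleting the mixed terms --- all killed by the third family just established --- collapses this to $A_J$ when $J\cap\{n-2,n-1\}\neq\emptyset$ (recovering the first family), and, when $J\subseteq\{1,\ldots,n-3\}$ (which by the definition of the type forces $\{i,\ldots,n-3\}\subseteq J$), to $A_{J-\{i,\ldots,n-3\}}\bigl(A_{\{i,\ldots,n-3\}}+(-1)^{n-2-i}B_{\{i,\ldots,n-3\}}\bigr)$, the second family. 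This produces a surjection $\Lambda_\Z[A_1,\ldots,A_{n-1},B_1,\ldots,B_{n-3}]/I_\ell\twoheadrightarrow H^\ast_{(1)}(\mathcal{M}_\ell;\Z)$.

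\textbf{No further relations; the main difficulty.} It remains to match ranks in each degree, which I would do by induction on $n$: the case $i=1$ is Proposition \ref{cohlametype}, and for $i\ge2$ Lemma \ref{typeinduction} says $\ell'=(\ell_2,\ldots,\ell_{n-1},\ell_n+\ell_1)$ is again of a type covered by the theorem, with $\mathcal{M}_{\ell_{\ell_1-\varepsilon}}\cong\mathcal{M}_{\ell'}\times S^1$ by Proposition \ref{smallfirst} and $\pi_1(\mathcal{M}_\ell)$ obtained from $\pi_1(\mathcal{M}_{\ell'})\times\Z$ via the index $n-3$ and $n-4$ critical points of the Morse function $g:W\to\R$ of Lemma \ref{morsefctg} (new degree-one generators, new degree-two relations), exactly as in Propositions \ref{fundgrpin-1}--\ref{fundgrpn-4short}. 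Concretely, the monomial relations of the first and third families reduce a general monomial of the quotient to a $\Z$-combination of the $A_I$ with $I\in\dot{\mathcal S}(\ell)$, of mixed monomials $A_IB_K$ with $I\subseteq\{1,\ldots,i-1\}$, $K\subseteq\{i,\ldots,n-3\}$, and of the $B_K$; the binomial relations then trade each top $A$-monomial supported in $\{1,\ldots,n-3\}$ for a $B$-monomial; and one checks that the resulting spanning set has, degree by degree, the rank of $H^\ast_{(1)}(\mathcal{M}_\ell;\Z)$ computed from Theorem \ref{bettinumbers} and Lemma \ref{bettispecial} (keeping track of the single degree, $n-4$ in the $i=1$ case, in which $H^\ast_{(1)}$ falls short of $H^\ast$). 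Once the ranks agree the surjection is an isomorphism and $\Lambda_\Z[A_\ast,B_\ast]/I_\ell$ is in particular torsion-free. \emph{I expect this last count to be the main obstacle}: because $I_\ell$ is not a monomial ideal, one cannot simply read off a monomial basis, and must instead follow degree by degree how the binomial second family interacts with the monomial first and third families, while keeping the induction of Lemma \ref{typeinduction} and the Betti bookkeeping of Theorem \ref{bettinumbers} and Lemma \ref{bettispecial} precisely aligned; the surface subcase of Proposition \ref{fundgrpn-4long} needs a separate check that its extra cup-product relation is absorbed into the second family without introducing torsion.
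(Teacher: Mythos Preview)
Your verification that the listed relations hold is essentially correct, with one wrinkle: the claim that the first family follows from $\pi_1$ alone is not right. If $J=\{j_1,\ldots,j_k,m\}$ with $m\in\{n-2,n-1\}$, each $j_r\le i-1$, and every $\{j_r,m,n\}$ short, then $J$ \emph{is} a clique of $\Gamma_\ell$, so $A_J\neq 0$ in $H^\ast(G_\ell)$; such $J$ with $J\cup\{n\}$ long certainly exist. Fortunately your torus argument covers this case (it is exactly the paper's first subcase, where $J\cap\{i,\ldots,n-3\}=\emptyset$ forces $A_J=X_J$), so the relations are all established.

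The genuine gap is in ``no further relations.'' You propose to match a spanning set of $\Lambda/I_\ell$ against ``the rank of $H^\ast_{(1)}(\mathcal{M}_\ell;\Z)$ computed from Theorem~\ref{bettinumbers} and Lemma~\ref{bettispecial}.'' But those results give the Betti numbers of $H^\ast(\mathcal{M}_\ell)$, not the ranks of the subring $H^\ast_{(1)}$; there is no a~priori way to read off in which degrees $H^\ast_{(1)}$ coincides with $H^\ast$ and by how much it falls short elsewhere, so the count cannot close without assuming what you are proving. You correctly set up the induction via Lemma~\ref{typeinduction} and mention the Morse function $g$ on $W$, but you use $g$ only to rebuild $\pi_1$, whereas the paper uses it to control $H^\ast_{(1)}$ directly.

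The missing mechanism is this. For regular $t$ one has $H^\ast_{(1)}(W_t;\Z)\cong H^\ast_{(1)}(\mathcal{M}_{\ell_t};\Z)$: the surjection from $H^\ast(\pi_1(\mathcal{M}_{\ell_t}))$ factors through $H^\ast_{(1)}(W_t)$ by Corollary~\ref{fundgrptrns}, while $H^\ast(W_t)\to H^\ast(\mathcal{M}_{\ell_t})$ is injective because, by Lemma~\ref{subsetssmall} and freeness of the homology, each critical point of $g$ adds exactly one $\Z$-summand to $H^\ast(W_t)$ and two to $H^\ast(\mathcal{M}_{\ell_t})$. Passing a critical point $q_J$ (with $K=\{2,\ldots,n-1\}-J$) thus enlarges $H^\ast_{(1)}(W_t)$ by a single $\Z$, and this new class is $X_K\neq 0$ in $B^\ast_{\ell_t}$ by Proposition~\ref{thefacering}. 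Hence $H^\ast_{(1)}(\mathcal{M}_{\ell_{t_0-\delta}})$ is obtained from $H^\ast_{(1)}(\mathcal{M}_{\ell_{t_0+\delta}})$ by deleting exactly the relation $X_K$, which is one of the listed generators of $I_\ell$. Starting from $\ell_{\ell_1-\varepsilon}$ (where the statement holds by induction, since $\mathcal{M}_{\ell_{\ell_1-\varepsilon}}\cong S^1\times\mathcal{M}_{\ell'}$) and sliding down removes precisely the relations that distinguish $I_{\ell_{\ell_1-\varepsilon}}$ from $I_\ell$, completing the proof. This replaces your static rank count by a one-relation-at-a-time bookkeeping that never requires knowing $\mathrm{rank}\,H^k_{(1)}$ in advance.
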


\begin{proof}
We begin with the case that $i\leq n-5$, so $\pi_1(\mathcal{M}_\ell)$ is a right-angled Artin group.

As above, we write $G_\ell=\pi_1(\mathcal{M}_\ell)$ and $\Gamma_\ell$ for the graph determining $G_\ell$. Then
\begin{eqnarray*}
 H^\ast(G_\ell;\Z)&\cong & \Lambda_\Z[F\Gamma_\ell],
\end{eqnarray*}
where $F\Gamma_\ell$ is the flag-simplicial complex spanned by the graph $\Gamma_\ell$, see \cite{chadav}. That is, the vertices $u_0,\ldots,u_m$ in $\Gamma_\ell$ span a simplex in $F\Gamma_\ell$ if and only if $u_k,u_l$ are adjacent for all $k,l\in\{0,\ldots,m\}$. In particular, this is an exterior face ring, and we write the generators as $A_1,\ldots,A_{n-1},B_i,\ldots,B_{n-3}$. Furthermore, the inclusion $i:\mathcal{M}_\ell\to T^{n-1}$ factors through $BG_\ell$, the classifying space for $G_\ell$. We therefore get a commutative diagram
\[
 \xymatrix{H^\ast(T^{n-1};\Z)\cong\Lambda_\Z[X_1,\ldots,X_{n-1}] \ar[d]^{i^\ast} \ar[rd]^{\varphi^\ast} \\
H^\ast_{(1)}(\mathcal{M}_\ell;\Z) & H^\ast(G_\ell;\Z)\cong \Lambda_\Z[F\Gamma_\ell] \ar[l]}
\]
and by Proposition \ref{fundgrpin-1} we have
\begin{eqnarray*}
\varphi^\ast(X_j)&=&A_j\,\,\,\mbox{ for } j\in \{1,\ldots,i-1,n-2,n-1\}\\
\varphi^\ast(X_j)&=&A_j-B_j\,\,\,\mbox{ for }j\in \{i,\ldots,n-3\}.
\end{eqnarray*}
 Furthermore, the map $H^1(G_\ell;\Z)\to H^1(\mathcal{M}_\ell;\Z)$ is an isomorphism, which implies that $H^\ast(G_\ell;\Z)\to H^\ast_{(1)}(\mathcal{M}_\ell;\Z)$ is surjective.

Therefore we write $A_1,\ldots,A_{n-1}, B_i,\ldots,B_{n-3}$ for the corresponding elements in $H^1(\mathcal{M}_\ell;\Z)$. By abuse of notation, we also write $X_j=i^\ast(X_j)\in H^1(\mathcal{M}_\ell;\Z)$ for $j=1,\ldots,n-1$, leading to $X_j=A_j$ for $j\in\{1,\ldots,i-1,n-2,n-1\}$ and $X_j=A_j-B_j$ for $j\in\{i,\ldots,n-3\}$.

It is clear that $A_1,\ldots,A_{n-1},B_i,\ldots,B_{n-3}$ generate $H^\ast_{(1)}(\mathcal{M}_\ell;\Z)$, and to show (\ref{cohtypei}), we have to show that $H^\ast_{(1)}(\mathcal{M}_\ell;\Z)$ is obtained by adding the relations given in the statement.

First note that $X_J=0$ for $J\cup\{n\}$ long holds, as the $X_j$ span the balanced subalgebra, compare Proposition \ref{thefacering}. If for such $J$ we have $J\cap \{n-2,n-1\}\not=\emptyset$, we get $A_J=0$, which is seen by considering the two cases $J\cap \{i,\ldots,n-3\}=\emptyset$ or not. In the first case we simply get $A_J=X_J=0$, and in the second case we have $A_jA_m=0$ for $j\in \{i,\ldots,n-3\}$, $m\geq n-2$, as this relation holds in $H^\ast(G_\ell;\Z)$. In particular this implies $A_J=0$.

If $J\cap \{n-2,n-1\}=\emptyset$, observe that necessarily $\{i,\ldots,n-3\}\subset J$ by the type restriction on $\ell$. An easy induction argument gives
\begin{eqnarray*}
X_{\{i,\ldots,n-3\}}&=&A_{\{i,\ldots,n-3\}}+(-1)^{n-2-i}B_{\{i,\ldots,n-3\}}
\end{eqnarray*}
which implies
\begin{eqnarray*}
 A_{J-\{i,\ldots,n-3\}}(A_{\{i,\ldots,n-3\}}+(-1)^{n-2-i} B_{\{i,\ldots,n-3\}})&=&X_J
\end{eqnarray*}
and the required relation. The last type of relations hold as they hold in $H^\ast(G_\ell;\Z)$.

So we only have to show that we do not have to add any more relations. We will do this by induction on $i$.

The case $i=1$ follows easily from Proposition \ref{cohlametype}. For $i\geq 2$ we use again the cobordism $W$ between $\mathcal{M}_\ell$ and $\mathcal{M}_{\ell_{\ell_1-\varepsilon}}$. Since $\mathcal{M}_{\ell_{\ell_1-\varepsilon}}\cong S^1\times \mathcal{M}_{\ell'}$, it is easy to see that the result holds for $\ell_{\ell_1-\varepsilon}$ (after shifting the indices of $A$ and $B$), as it holds for $\ell'$ by induction.

For $t\in[0,\ell_1-\varepsilon]$, let $W_t=g^{-1}([t,\infty))$. If we start with $t=\ell_1-\varepsilon$ and then let $t$ decrease, the cohomology of $W_t$ changes everytime we slide over a critical point of $g$. But notice that the homology of $\mathcal{M}_\ell$ is free, with every element of $\dot{\mathcal{S}}(\ell)$ producing two generators by Theorem \ref{bettinumbers}. By Lemma \ref{subsetssmall} this means that sliding over a critical point of $g$ produces one extra summand of $\Z$ in $H^\ast(W_t;\Z)$ and two extra summands in $H^\ast(\mathcal{M}_{\ell_t};\Z)$.

Let $J\subset \{2,\ldots,n-1\}$ be such that $J$ is short with respect to $\ell$, but $J\cup\{1\}$ is long. With $K=\{2,\ldots,n-1\}-J$ we get $K\cup\{n\}$ is short, but $K\cup \{1,n\}$ is long. Let $t_0=g(q_J)$ and $\delta>0$ so that $g^{-1}([t_0-\delta,t_0+\delta])$ does not contain other critical points\footnote{By possibly changing $\ell$ slightly we can ensure that all critical points of $g$ have different value.}. Inductively we assume that (\ref{cohtypei}) holds for $\ell_{t_0+\delta}$ and we want to show that it holds for $\ell_{t_0-\delta}$.

First observe that $H^\ast_{(1)}(W_t;\Z)\cong H^\ast_{(1)}(\mathcal{M}_{\ell_t};\Z)$ for all regular values $t$. This is true because the surjective map $H^\ast(\pi_1(\mathcal{M}_{\ell_t});\Z)\to H^\ast _{(1)}(\mathcal{M}_{\ell_t};\Z)$ factors through $H^\ast_{(1)}(W_t;\Z)$ by Corollary \ref{fundgrptrns}, and $H^\ast_{(1)}(W_t;\Z)\to H^\ast_{(1)}(\mathcal{M}_{\ell_t};\Z)$ is injective by the discussion above.

Also $H^\ast(T^{n-1};\Z)\to H^\ast_{(1)}(\mathcal{M}_{\ell_t};\Z)$ factors through $H^\ast_{(1)}(W_t;\Z)$, and we have an extra $\Z$-summand corresponding to $X_K$ in $H^\ast_{(1)}(W_{t_0-\delta};\Z)$. Note that $0\not=X_K\in H^\ast_{(1)}(\mathcal{M}_{\ell_{t_0-\delta}};\Z)$, as it is part of the balanced subalgebra by Proposition \ref{thefacering}.

Therefore $H^\ast_{(1)}(\mathcal{M}_{\ell_{t_0-\delta}};\Z)$ is obtained from $H^\ast_{(1)}(\mathcal{M}_{\ell_{t_0+\delta}};\Z)$ by removing the relation $X_K$, which is either of the form $A_{K-\{i,\ldots,n-3\}}(A_{\{i,\ldots,n-3\}}\pm B_{\{i,\ldots,n-3\}})$ or $A_K$. This finishes the induction.

We now consider the case $i=n-4$ and $\{n-4,n-3,n\}$ long with respect to $\ell$. The proof is by induction on $n$. For $n=5$, there is only one case and $\mathcal{M}_\ell\cong M_2$, the orientable surface of genus 2. It is clear that the statement is correct then. So assume $n\geq 6$.

Then $\pi_1(\mathcal{M}_\ell)\cong \bar{G}_\ell$ with $\bar{G}_\ell$ as in Proposition \ref{fundgrpn-4long}. We still have the diagram
\[
 \xymatrix{H^\ast(T^{n-1};\Z)\cong\Lambda_\Z[X_1,\ldots,X_{n-1}] \ar[d]^{i^\ast} \ar[rd]^{\varphi^\ast} \\
H^\ast_{(1)}(\mathcal{M}_\ell;\Z) & H^\ast(\bar{G}_\ell;\Z) \ar[l]}
\]
We claim that
\begin{eqnarray}\label{cohn-4long}
 H^\ast(\bar{G}_\ell;\Z)&\cong & \Lambda_\Z[A_1,\ldots,A_{n-1},B_{n-4},B_{n-3}]/I,
\end{eqnarray}
where $I$ is the ideal generated by the relations
\begin{gather*}
 A_{n-4}A_{n-3}+B_{n-4}B_{n-3} \\
A_j B_k \mbox{ for }  j\in\{n-4,\ldots,n-1\}, k\in\{n-4,n-3\} \\
A_j A_k \mbox{ for }  j\in\{n-2,n-1\}\mbox{ with } \{k,j,n\}\mbox{ long,}\\
A_j  \mbox{ if }  \{j,n\} \mbox{ is long,}
\end{gather*}
and the homomorphism $\varphi^\ast:H^\ast(T^{n-1};\Z)\to H^\ast(\bar{G}_\ell;\Z)$ satisfies
\begin{eqnarray*}
\varphi^\ast(X_j)&=&A_j\,\,\,\mbox{ for } j\notin \{n-4,n-3\}\\
\varphi^\ast(X_j)&=&A_j-B_j\,\,\,\mbox{ for }j\in \{n-4,n-3\}.
\end{eqnarray*}
Using Proposition \ref{fundgrpn-4long}, this is easily seen to be true if $\{n-2,n\}$ is long, as in that case $\bar{G}_\ell\cong \Z^{n-5}\times S_2$, and we use $0=X_{\{n-4,n-3\}}=A_{n-4}A_{n-3}+B_{n-4}B_{n-3}$. In the general case, we get that $\bar{G}_\ell$ is obtained from $\Z^{n-5}\times S_2$ by up to two trivial HNN-extensions. To be more precise, if $\{n-2,n\}$ is short, we get an extra generator $a_{n-2}$ which commutes with $a_i$ provided that $\{i,n-2,n\}$ is short, and has no relations with any of the other generators. Let $k\leq n-5$ be the largest number with $\{k,n-2,n\}$ short, and $H_1$ be the corresponding HNN-extension of $\Z^{n-5}\times S_2$ along $\Z^k\subset \Z^{n-5}$. From the long exact sequence for an HNN-extension \cite[Ch.VII.9]{brown}, we get
\begin{eqnarray*}
 H^\ast(H_1;\Z)&\cong &H^\ast(\Z^{n-5}\times S_2;\Z)\oplus H^{\ast-1}(\Z^k;\Z)
\end{eqnarray*}
as abelian groups, since the $\alpha$-homomorphism in that sequence is zero as we have a trivial HNN-extension and trivial coefficients. Now $H^0(\Z^k;\Z)\subset H^1(H_1;\Z)$ is generated by $A_{n-2}$, and the other generators of $H^{\ast-1}(\Z^k;\Z)$ correspond to $A_JA_{n-2}$ with $J\subset\{1,\ldots,k\}$. There are no relations among these, as $\Z^k\times \Z$, generated by $a_1,\ldots,a_k,a_{n-2}$ is a retract of $H_1$. If $\{n-1,n\}$ is long, we get $H_1\cong \bar{G}_\ell$ and (\ref{cohn-4long}) follows.

If $\{n-1,n\}$ is also short, then $\bar{G}_\ell$ is a trivial HNN-extension of $H_1$ along a subgroup $\Z^l$ generated by $a_1,\ldots,a_l$, where $l\leq n-5$ is the largest number with $\{l,n-1,n\}$ short. Repeating the argument above gives (\ref{cohn-4long}) in this case as well.

The relations $A_J$ for $J\cap \{n-2,n-1\}\not=\emptyset$ are contained in $I_\ell$ by the same argument as in the case $i\leq n-5$. It follows that all the relations listed in the statement of the theorem are contained in $I_\ell$ and we have to show that no other relations are needed. But the proof of this is identical to the argument used in the case $i\leq n-5$ above.

It remains to consider the case $i=n-4$ and $\{n-4,n-3,n\}$ short with respect to $\ell$. As mentioned above Proposition \ref{fundgrpn-4short}, we have $n\geq 7$ in that case. Again all the relations are contained in $I_\ell$ by an argument identical to the case $i\leq n-5$. Note that the fundamental group is now again a right-angled Artin group.

The argument that no other relations are needed is similar to the arguments above, but we have to be slightly careful with the induction step. The point is that $\{n-4,n-3,n\}$ may or may not be short with respect to $\ell_{\ell_1-\varepsilon}$. If it is in long with respect to $\ell_{\ell_1-\varepsilon}$, the statement holds for this length vector by the case considered above, and the cobordism argument used in the case $i\leq n-5$ carries over and shows that the result also holds for $\ell$. Passing over the critical point corresponding to $\{n-4,n-3\}$ simply removes the relation $A_{n-4}A_{n-3}+B_{n-4}B_{n-3}$.

If $n=7$, we necessarily have that $\{n-4,n-3,n\}$ is long with respect to $\ell_{\ell_1-\varepsilon}$, so we get an induction start. Induction on $n$, using the cobordism argument once again, now gives the desired result.
\end{proof}

Define the annihilator
\begin{eqnarray*}
 {\rm Ann}(\ell)&=&\{x\in H^1(\mathcal{M}_\ell;\Z)\,|\,xy_1\cdots y_{n-4}=0\,\,\,\forall y_1,\ldots,y_{n-4}\in H^1(\mathcal{M}_\ell;\Z) \}.
\end{eqnarray*}

Similarly, for $x\in H^1(\mathcal{M}_\ell;\Z)$ define
\begin{eqnarray*}
 {\rm Ann}(x;\ell)&=&\{y\in H^1(\mathcal{M}_\ell;\Z)\,|\,xy=0\}.
\end{eqnarray*}

Notice that these annihilators can be defined for any $\ell$.

\begin{lemma}\label{annihilator}
 Let $\ell$ be special of type $\{i,n-2,n-1\}$. Then
\begin{eqnarray*}
 {\rm Ann}(\ell)&=&\langle A_{n-2},A_{n-1}\rangle.
\end{eqnarray*}

\end{lemma}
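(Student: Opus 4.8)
The plan is to read off both inclusions directly from the explicit presentation of $H^\ast_{(1)}(\mathcal{M}_\ell;\Z)$ given in Theorem \ref{cohtypein-4}. Since $\mathrm{Ann}(\ell)$ is defined purely in terms of $H^1(\mathcal{M}_\ell;\Z)$ and products into $H^{n-3}(\mathcal{M}_\ell;\Z)$, and the degree-$(n-3)$ part of the top cohomology lies in $H^\ast_{(1)}(\mathcal{M}_\ell;\Z)$ (the balanced part together with the $Y_J$'s, but products of $n-4$ degree-one classes land in the subring generated in degree $1$), I only need to work inside the quotient algebra $\Lambda_\Z[A_1,\ldots,A_{n-1},B_1,\ldots,B_{n-3}]/I_\ell$.

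First I would check the inclusion $\langle A_{n-2},A_{n-1}\rangle\subseteq \mathrm{Ann}(\ell)$. Take $x\in\{A_{n-2},A_{n-1}\}$ and any product $xy_1\cdots y_{n-4}$ with $y_j\in H^1$. Expanding each $y_j$ in the generators $A_1,\ldots,A_{n-1},B_1,\ldots,B_{n-3}$, a nonzero monomial must be squarefree in these generators, cannot contain any $B_k$ (since $A_{n-2}B_k$ and $A_{n-1}B_k$ are relations, as $n-2,n-1\in\{i,\ldots,n-1\}$), and hence is of the form $A_K$ with $\{n-2,n-1\}\cap K\ni$ the chosen index and $|K|\le n-3$. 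It then suffices to observe that any such $A_K$ of degree $n-3$ with $K\cap\{n-2,n-1\}\neq\emptyset$ satisfies $K\cup\{n\}$ long: indeed $|K\cup\{n\}|=n-2$ out of $n$ indices including $n$ and one of $n-2,n-1$, and by the type restriction $\{i,n-2,n-1\}$ long one sees the complement (two indices $\leq n-1$, at least one $<n-2$) is short. Hence $A_K=0$ by the first family of relations in Theorem \ref{cohtypein-4}, so $xy_1\cdots y_{n-4}=0$.

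For the reverse inclusion, suppose $x\in\mathrm{Ann}(\ell)$ and write $x=\sum_{j=1}^{n-1}\alpha_j A_j+\sum_{k=i}^{n-3}\beta_k B_k$. I want to show $\alpha_j=0$ for $j\leq n-3$ and $\beta_k=0$ for all $k$. Using the total order on $\dot{\mathcal S}_{n-4}(\ell)$ from Lemma \ref{totalorder} and the fact that $\{i,\ldots,n-3\}\cup\{n\}$ together with any chamber element shows $\{1,\ldots,i-1\}\cup\{i,\ldots,n-3\}=\{1,\ldots,n-3\}$ has $\{1,\ldots,n-3\}\cup\{n\}$ long but its $(n-4)$-element subsets short: for each $j\leq n-3$ there is a set $L_j\subset\{1,\ldots,n-3\}\setminus\{j\}$ with $|L_j|=n-4$ and $L_j\cup\{n\}$ short, so $A_{L_j}\neq 0$ in degree $n-4$ (it represents a generator of the balanced subalgebra by Proposition \ref{thefacering}). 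Multiplying $x$ by the product $\prod_{m\in L_j}X_m$ — a product of $n-4$ degree-one classes, hence annihilating $x$ — and using $X_m=A_m$ or $A_m-B_m$, the only surviving monomial is $\pm\alpha_j A_j A_{L_j}$ (all $B$-terms die by the $A_jB_k$ relations once the factor $A_j$ is present, and the cross terms among the $X_m$'s recombine to $A_{L_j}$), and $A_j A_{L_j}=\pm X_{\{j\}\cup L_j}\neq 0$ since $\{j\}\cup L_j\subseteq\{1,\ldots,n-3\}$ is a chamber-short set with $n$. Thus $\alpha_j=0$ for $j\leq n-3$. A parallel computation multiplying by an appropriate product of $X_m$'s containing $X_k=A_k-B_k$ forces $\beta_k=0$: one picks $L\subset\{i,\ldots,n-3\}\setminus\{k\}$ extending to an $(n-4)$-short set, and the $B_k$-coefficient is detected on $B_k B_L=\pm X_{\{k\}\cup L}\neq 0$. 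Hence $x\in\langle A_{n-2},A_{n-1}\rangle$.

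The main obstacle I anticipate is bookkeeping in the reverse inclusion: one must be careful that the "test products" $\prod_{m\in L}X_m$ genuinely isolate a single coefficient, which requires checking that the relevant $(n-4)$-element subsets of $\{1,\ldots,n-3\}$ are short with respect to $\ell$ — this uses that $\ell$ is special of type $\{i,n-2,n-1\}$, so $\{1,\ldots,n-3\}\cup\{n\}$ is long but all its proper subsets with $n$ that avoid the "large" coordinate are short — and that, after substituting $X_m=A_m-B_m$ (for $m\in\{i,\ldots,n-3\}$), the mixed $A$–$B$ terms either vanish by the relations $A_jB_k=0$ or reassemble, via the relation $A_{\{i,\ldots,n-3\}}+(-1)^{n-2-i}B_{\{i,\ldots,n-3\}}=X_{\{i,\ldots,n-3\}}$, into something still detected by Poincaré duality. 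All of this is routine once the presentation of Theorem \ref{cohtypein-4} and Proposition \ref{basis} are in hand; the argument for the (easier) cases $i=n-4$ with $\{n-4,n-3,n\}$ long or short is identical after substituting the presentation from equation (\ref{cohn-4long}) for the right-angled Artin presentation, since the set of $A_j B_k$ relations and the single quadratic relation among $A_{n-4},A_{n-3},B_{n-4},B_{n-3}$ behave the same way under the test products.
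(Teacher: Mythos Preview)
Your forward inclusion $\langle A_{n-2},A_{n-1}\rangle\subseteq\mathrm{Ann}(\ell)$ is fine, though the reason every $(n-3)$-element $K\subset\{1,\ldots,n-1\}$ has $K\cup\{n\}$ long is simply that $\dot{\mathcal S}_{n-3}(\ell)=\emptyset$ for special $\ell$; the type hypothesis plays no role there.

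The reverse inclusion, however, has a genuine gap. First, your claim that every $(n-4)$-element subset $L_j=\{1,\ldots,n-3\}\setminus\{j\}$ satisfies $L_j\cup\{n\}$ short is false: for type $\{i,n-2,n-1\}$ the elements of $\dot{\mathcal S}_{n-4}(\ell)$ are exactly the complements of the long $3$-sets $\{j,n-2,n-1\}$ with $j\ge i$, so $L_j\cup\{n\}$ is long whenever $j<i$. Second, and more seriously, the test products $\prod_{m\in L_j}X_m$ do not isolate $\alpha_j$. A direct computation using $X_m=A_m-B_m$ for $m\ge i$ and the relations $A_mB_k=0$ gives
\[
x\cdot X_{L_j}\;=\;\pm\alpha_j\,A_1\cdots A_{n-3}\;\pm\;\beta_j\,A_1\cdots A_{i-1}B_i\cdots B_{n-3}
\]
for $j\ge i$, and these two monomials are \emph{equal} up to sign by the relation of type two in $I_\ell$. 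So you only learn $\alpha_j\pm\beta_j=0$, not $\alpha_j=0$. Relatedly, your assertion ``$A_jA_{L_j}=\pm X_{\{1,\ldots,n-3\}}\ne 0$ since $\{1,\ldots,n-3,n\}$ is short'' is wrong on both counts: the set is long (again because $\ell$ is special), so $X_{\{1,\ldots,n-3\}}=0$ in $B^\ast_\ell$, and $A_1\cdots A_{n-3}$ is \emph{not} the image of $X_1\cdots X_{n-3}$.

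The remedy---and this is exactly what the paper does---is to test with $A$-products rather than $X$-products. The key observation is that $A_1\cdots A_{n-3}=\pm A_1\cdots A_{i-1}B_i\cdots B_{n-3}$ (from the type-two relation with $J=\{1,\ldots,n-3\}$), and the right-hand monomial lies in no generator of $I_\ell$, hence is nonzero in degree $n-3$. Now multiplying $x$ by $A_{L_j}=\prod_{m\ne j,\,m\le n-3}A_m$ kills every term except $\pm\alpha_j A_1\cdots A_{n-3}$: the $B_k$-terms die because $A_{L_j}$ always contains some $A_m$ with $m\ge i$ and $m\ne k$ (here one uses $i\le n-4$), and the $A_{n-2},A_{n-1}$-terms die by the type-one relations. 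This gives $\alpha_j=0$ for all $j\le n-3$; a symmetric argument with $A_1\cdots A_{i-1}B_i\cdots\widehat{B_k}\cdots B_{n-3}$ then forces $\beta_k=0$. The point is that $A_{L_j}$ (unlike $X_{L_j}$) separates the $A$- and $B$-coefficients because it annihilates all $B_k$'s outright.
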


\begin{proof}
 This follows easily from Theorem \ref{cohtypein-4} after observing that $A_1\cdots A_{n-3}=\pm A_1\cdots A_{i-1}B_i\cdots B_{n-3}\not=0$. Compare also the proof of Lemma \ref{annihi}.
\end{proof}

\begin{lemma}\label{annix}
 Let $\ell$ be special of type $\{i,n-2,n-1\}$, $m\in\{1,\ldots,i-1\}$ with $i\leq n-4$ and $a_m,\ldots, a_{n-3}, b_i,\ldots,b_{n-3}\in \Z$ with $a_m\not=0$. Let
\begin{eqnarray*}
 x&=&\sum_{j=m}^{n-3}a_jA_j+\sum_{j=i}^{n-3}b_j B_j,
\end{eqnarray*}
then 
\begin{eqnarray*}
{\rm rank}\,{\rm Ann}(x;\ell)\,\,\,=\,\,\,{\rm rank}\,{\rm Ann}(A_m;\ell)&\leq & 3
\end{eqnarray*}
with equality if and only if $\{n-1,n\}$ is short and $\{m,n-2,n\}$ is long with respect to $\ell$.
\end{lemma}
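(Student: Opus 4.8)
The plan is to compute both ranks explicitly from the presentation of $H^\ast_{(1)}(\mathcal{M}_\ell;\Z)$ given in Theorem \ref{cohtypein-4}; since rank is unaffected by $-\otimes\Q$, I will work over $\Q$. Write $\eta=|\{k\in\{n-2,n-1\}\mid\{k,n\}\text{ short}\}|$ and $\mu_m=|\{k\in\{n-2,n-1\}\mid\{m,k,n\}\text{ short}\}|$, so that $0\le\mu_m\le\eta\le 2$. The statement will follow once I show that both annihilators have dimension $1+\eta-\mu_m$.

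First I would collect the combinatorial input. Since $\ell$ is special of type $\{i,n-2,n-1\}$ one checks that $\dot{\mathcal{S}}_{n-4}(\ell)=\{\{1,\ldots,n-3\}\setminus\{r\}\mid i\le r\le n-3\}$, and then reads off the degree-$2$ part of $I_\ell$ from Theorem \ref{cohtypein-4}: among products of the generators $A_1,\ldots,A_{n-3}$, of $A_{n-2},A_{n-1}$ (these two present exactly when $\{n-2,n\}$, resp.\ $\{n-1,n\}$, is short) and of $B_i,\ldots,B_{n-3}$, one has $A_jA_l\ne 0$ for distinct $j,l\le n-3$; $A_jA_k=0$ (for $k\in\{n-2,n-1\}$) if and only if $\{j,k,n\}$ is long; $A_jB_l=0$ (for $l\in\{i,\ldots,n-3\}$) if and only if $j\ge i$; $B_jB_l\ne 0$ for $j<l$ in $\{i,\ldots,n-3\}$; the only exception is that when $i=n-4$ and $\{n-4,n-3,n\}$ is long the relation $A_{n-4}A_{n-3}+B_{n-4}B_{n-3}=0$ holds. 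I will also use, via the ordering of $\ell$, that $\{m,k,n\}$ long forces $\{j,k,n\}$ long for all $j\ge m$, that $\{n-1,n\}$ short forces $\{n-2,n\}$ short, and that $\{m,n-2,n\}$ long forces $\{m,n-1,n\}$ long.

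Next, for $y=\sum_{l=1}^{n-1}c_lA_l+\sum_{l=i}^{n-3}d_lB_l$ I would multiply out $xy$ in $H^2(\mathcal{M}_\ell;\Q)$ and read off, monomial by monomial, the linear system equivalent to $xy=0$: from the $A_jA_l$-coefficients, $a_jc_l-a_lc_j=0$ for $1\le j<l\le n-3$ (with $a_j:=0$ for $j<m$); from the $A_jA_k$-coefficients, $a_jc_k=0$ whenever $\{j,k,n\}$ is short; from the $A_pB_q$-coefficients, $a_pd_q-b_qc_p=0$ for $p\le i-1$ and $q\in\{i,\ldots,n-3\}$; from the $B_jB_l$-coefficients, $b_jd_l-b_ld_j=0$. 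Using $a_m\ne 0$, the first family gives $c_l=0$ for $l<m$ and $c_l=(a_l/a_m)c_m$ for $l\ge m$; the third at $p=m$ gives $d_q=(b_q/a_m)c_m$; the second gives $c_k=0$ exactly for those $k$ with $\{m,k,n\}$ short. A direct substitution then shows every remaining equation holds automatically --- including, when $i=n-4$ and $\{n-4,n-3,n\}$ is long, the single merged equation coming from $A_{n-4}A_{n-3}+B_{n-4}B_{n-3}$. Hence ${\rm Ann}(x;\ell)$ is parametrised freely by $c_m$ together with the $c_k$ for which $\{k,n\}$ is short and $\{m,k,n\}$ is long, so $\dim_\Q{\rm Ann}(x;\ell)=1+(\eta-\mu_m)$.

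Running the identical computation with $x$ replaced by $A_m$ (all $a_j=0$ for $j\ne m$, all $b_j=0$, $a_m=1$) yields $\dim_\Q{\rm Ann}(A_m;\ell)=1+(\eta-\mu_m)$ as well, so ${\rm rank}\,{\rm Ann}(x;\ell)={\rm rank}\,{\rm Ann}(A_m;\ell)=1+\eta-\mu_m\le 3$, with equality if and only if $\eta=2$ and $\mu_m=0$; by the ordering facts above this is exactly the condition that $\{n-1,n\}$ is short and $\{m,n-2,n\}$ is long. The one place needing care is the bookkeeping in the third paragraph: getting right which products survive in $H^2$, which of $A_{n-2},A_{n-1}$ are actually present, and the exceptional $i=n-4$ relation, and then verifying that the displayed solution of the linear system is the only one.
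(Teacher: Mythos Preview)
Your proof is correct and follows essentially the same approach as the paper: both compute the annihilator by expanding $xy$ in the monomial basis of $H^2_{(1)}(\mathcal{M}_\ell)$ coming from Theorem~\ref{cohtypein-4}, separate off the contribution of $A_{n-2},A_{n-1}$, and use the row indexed by $A_m$ (i.e.\ the equations $a_mc_l=a_lc_m$, $a_md_q=b_qc_m$) to see that what remains is one-dimensional. The paper packages this last step into a single matrix $M$ whose kernel visibly has rank~$1$, while you substitute the solution back and check all remaining coefficient equations explicitly; your version is in fact slightly more complete, since you verify sufficiency and handle the merged equation in the exceptional case $i=n-4$, $\{n-4,n-3,n\}$ long, which the paper does not mention (though its argument, using only the $A_mA_j$- and $A_mB_j$-coefficients with $m\le i-1$, is unaffected by that relation).
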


\begin{proof}
First observe that ${\rm Ann}(A_m;\ell)\subset \langle A_m,A_{n-2},A_{n-1}\rangle$ by Theorem \ref{cohtypein-4}, so the rank can be at most 3, with equality if and only if $\{n-1,n\}$ is short and $\{m,n-2,n\}$ is long with respect to $\ell$.

Let
\begin{eqnarray*}
 y&=&\sum_{j=1}^{n-1}c_jA_j+\sum_{j=i}^{n-3}d_j B_j
\end{eqnarray*}
and assume $y\in {\rm Ann}(x;\ell)$. Note that if $A_mA_k=0$ for $k\in\{n-2,n-1\}$, we also get $A_jA_k=0$ for $j\geq m$, and $c_k\in\Z$ can be arbitrary. But if $A_mA_k\not=0$ for $k\in\{n-2,n-1\}$, we get $c_k=0$, since $x$ does not have an nonzero $A_k$-coefficient. Hence for $k\geq n-2$ we get $A_k\in {\rm Ann}(x;\ell)$ if and only if $A_k\in {\rm Ann}(A_m;\ell)$. We will therefore now assume $c_k=0$ for $k\in \{n-1,n-2\}$.

For $j\not=m$ the coefficient of $A_mA_j$ in $xy$ is $a_mc_j-a_jc_m$, and the coefficient of $A_mB_j$ is $a_md_j-b_jc_m$. So for $j<m$ we get $c_j=0$. Consider the matrix
\begin{eqnarray*}
M&=&\begin{pmatrix} a_{m+1} & -a_m \\ \vdots & & \ddots \\ a_{n-3} & & & -a_m \\ b_i & & & & -a_m \\ \vdots & & & & & \ddots \\ b_{n-3} & & & & & & -a_m\end{pmatrix}
\end{eqnarray*}
with zeros in the empty spaces. Then $My=0$ is equivalent to the the coefficients of $A_mA_j$ and $A_mB_j$ for $j=m+1,\ldots,n-3$ in $xy$ are zero. As the rank of $\ker M$ is clearly 1, we get the desired result.
\end{proof}

\begin{theorem}\label{cohrigin-1}
 For $i\leq n-4$, type $\{i,n-2,n-1\}$ is cohomologically rigid.
\end{theorem}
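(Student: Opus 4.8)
The plan is to use the explicit presentation of $H^\ast_{(1)}(\mathcal{M}_\ell;\Z)$ from Theorem \ref{cohtypein-4}, together with the annihilator invariants of Lemmas \ref{annihilator} and \ref{annix}, to recover the balanced subalgebra $B^\ast_\ell$ purely ring-theoretically, and then to apply Gubeladze's Theorem \ref{isoprobsol} to read off $\dot{\mathcal{S}}(\ell)$.

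First I would restrict to low degrees. Any graded ring isomorphism $H^\ast(\mathcal{M}_\ell;\Z)\cong H^\ast(\mathcal{M}_{\ell'};\Z)$ maps the subring generated in degrees $0$ and $1$ to itself, hence induces an isomorphism $\psi\colon H^\ast_{(1)}(\mathcal{M}_\ell;\Z)\to H^\ast_{(1)}(\mathcal{M}_{\ell'};\Z)$, and by Corollary \ref{sametype} the vector $\ell'$ is again of type $\{i,n-2,n-1\}$ with the same $i$. By Theorem \ref{cohtypein-4} both rings are then quotients $\Lambda_\Z[A_1,\ldots,A_{n-1},B_i,\ldots,B_{n-3}]/I_\ell$, respectively with primed generators, and $I_\ell,I_{\ell'}$ are determined by the chambers of $\ell,\ell'$. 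The reason Theorem \ref{isoprobsol} cannot be applied at once is that $I_\ell$ is not a monomial ideal: besides the monomials $A_J$ and $A_jB_k$ it contains the binomials $A_{J\setminus\{i,\ldots,n-3\}}\bigl(A_{\{i,\ldots,n-3\}}+(-1)^{n-2-i}B_{\{i,\ldots,n-3\}}\bigr)$.

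The central step is to prove that $\psi$ carries the balanced subalgebra $B^\ast_\ell\subseteq H^\ast_{(1)}(\mathcal{M}_\ell;\Z)$ onto $B^\ast_{\ell'}$; since by Proposition \ref{thefacering} these are the exterior face rings $\Lambda_\Z[\tilde{\mathcal{S}}(\ell)]$ and $\Lambda_\Z[\tilde{\mathcal{S}}(\ell')]$, this is what makes Gubeladze's theorem applicable. For this one has to recognise the degree-one part $B^1_\ell\subseteq H^1(\mathcal{M}_\ell;\Z)$ ring-theoretically, and the two annihilator lemmas supply exactly this. Lemma \ref{annihilator} identifies the summand $\langle A_{n-2},A_{n-1}\rangle\subseteq B^1_\ell$ as the intrinsic subspace ${\rm Ann}(\ell)$. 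Lemma \ref{annix} shows that, for $x=\sum_{j\ge m}a_jA_j+\sum_j b_jB_j$ with $m\le i-1$ and $a_m\neq0$, the quantity ${\rm rank}\,{\rm Ann}(x;\ell)={\rm rank}\,{\rm Ann}(A_m;\ell)$ is a chamber invariant bounded by $3$; comparing these ranks with those of the remaining generators $A_i,\ldots,A_{n-3},B_i,\ldots,B_{n-3}$, each of which annihilates a whole block of $B$'s (resp.\ $A$'s) by Theorem \ref{cohtypein-4}, lets one single out the low generators $A_1,\ldots,A_{i-1}$ and reconstruct the splitting of the remaining degree-one classes into an $A$-part and a $B$-part. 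Feeding this back into the binomial relations pins down the images of the balanced generators $X_i,\ldots,X_{n-3}$ as well, so that $\psi(B^1_\ell)=B^1_{\ell'}$ and hence $\psi(B^\ast_\ell)=B^\ast_{\ell'}$.

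Finally I would apply Gubeladze's Theorem \ref{isoprobsol} to obtain $\tilde{\mathcal{S}}(\ell)\cong\tilde{\mathcal{S}}(\ell')$ as simplicial complexes, and then \cite[Lm.3]{fahasc} to promote this to the equality $\dot{\mathcal{S}}(\ell)=\dot{\mathcal{S}}(\ell')$; since $\ell$ and $\ell'$ are ordered, they lie in the same chamber, which proves that the type $\{i,n-2,n-1\}$ is cohomologically rigid. The step I expect to be the real obstacle is the third one: an abstract ring isomorphism may a priori blend the $A$- and $B$-directions and mix the low generators $A_1,\ldots,A_{i-1}$ with the rest, and the whole difficulty is to extract, from the annihilator data of Lemmas \ref{annihilator} and \ref{annix}, enough rigidity to monomialise $I_\ell$ compatibly with $\psi$ before Gubeladze's theorem can take over.
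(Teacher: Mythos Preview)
Your overall architecture is right---restrict to $H^\ast_{(1)}$, use the annihilator Lemmas \ref{annihilator} and \ref{annix} to gain control over a graded ring isomorphism, and finish with Gubeladze's Theorem \ref{isoprobsol}---but the central step you propose does not match what the paper does, and I do not see how to make it work as stated.

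You aim to show that (after adjustment) the isomorphism $\psi$ carries the balanced subalgebra $B^\ast_\ell$ onto $B^\ast_{\ell'}$. In degree one, $B^1_\ell$ is spanned by $A_1,\ldots,A_{i-1}$, the differences $X_j=A_j-B_j$ for $i\le j\le n-3$, and $A_{n-2},A_{n-1}$. Lemma \ref{annihilator} does pin down $\langle A_{n-2},A_{n-1}\rangle$, and Lemma \ref{annix}, together with a modification of $\psi$ by multiples of $A_{n-2},A_{n-1}$, lets one force $\psi(\langle A_i,\ldots,A_{n-3},B_i,\ldots,B_{n-3}\rangle)\subset\langle A'_i,\ldots,A'_{n-3},B'_i,\ldots,B'_{n-3}\rangle$. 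But neither lemma distinguishes $A_j-B_j$ from $A_j$, from $B_j$, or from $A_j+B_j$ inside that middle block: all these elements have identical annihilator behaviour. Nor do the lemmas force $\psi(A_m)$ for $m\le i-1$ to land in $\langle A'_1,\ldots,A'_{i-1}\rangle$; its image may have arbitrary components in the middle block. Your sentence ``feeding this back into the binomial relations pins down the images of the balanced generators $X_i,\ldots,X_{n-3}$'' is exactly the point where the argument is missing; the binomial relation only involves the top product $X_{\{i,\ldots,n-3\}}=A_{\{i,\ldots,n-3\}}\pm B_{\{i,\ldots,n-3\}}$, not the individual $X_j$.

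The paper circumvents this by \emph{not} attempting to recover $B^\ast_\ell$. Instead it enlarges $I_\ell$ to a purely monomial ideal $K_\ell$, obtained by replacing each binomial relation $A_{J'}(A_{\{i,\ldots,n-3\}}\pm B_{\{i,\ldots,n-3\}})$ by the \emph{two} monomials $A_J$ and $(AB)_J$ (and throwing in $A_jB_k$ for $j,k\in\{i,\ldots,n-3\}$). The key computation---using precisely the block-preservation obtained above---is that the lift $\bar\varphi$ sends $K_\ell$ into $K_{\ell'}$, so one gets an induced isomorphism of exterior face rings on the \emph{larger} generating set $\{A_1,\ldots,A_{n-1},B_i,\ldots,B_{n-3}\}$. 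Gubeladze's theorem then yields a bijection of these generators, and the monomial relations $A_jB_k=0$ in $K_\ell$ force it to respect the $A$/$B$ split, from which $\dot{\mathcal{S}}(\ell)=\dot{\mathcal{S}}(\ell')$ follows. In short: rather than extracting a smaller monomial subring, the paper passes to a further monomial quotient; this is the idea your proposal is missing.
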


\begin{proof}
 Let $\ell$ and $\ell'$ be special length vectors of type $\{i,n-2,n-1\}$, and let $\varphi:H^\ast(\mathcal{M}_\ell;\Z)\to H^\ast(\mathcal{M}_{\ell'};\Z)$ be an isomorphism. This restricts to an isomorphism $\varphi:H^\ast_{(1)}(\mathcal{M}_\ell;\Z)\to H^\ast_{(1)}(\mathcal{M}_{\ell'};\Z)$. By Theorem \ref{cohtypein-4} we thus have an isomorphism
\begin{eqnarray*}
 \Lambda_\Z[A_1,\ldots,A_{n-1},B_i,\ldots,B_{n-3}]/I_\ell&\cong &\Lambda_\Z[A'_1,\ldots,A'_{n-1},B'_i,\ldots,B'_{n-3}]/I_{\ell'}
\end{eqnarray*}
where the `prime' always refers to $\ell'$, and the relations are given by Theorem \ref{cohtypein-4}. It follows from Lemma \ref{annihilator} that $\varphi(A_k)\in \langle A'_{n-2},A'_{n-1}\rangle$ for $k\geq n-2$. In particular, there is a unique $T_k\in \langle A_{n-2},A_{n-1}\rangle$ with $\varphi(T_k)=A'_k$ for $k=n-2,n-1$.

Here we allow the possibility that $A_k=0$ for $k\geq n-2$, but observe that for rank reasons we have $A_k=0$ if and only if $A'_k=0$. Also notice that $\varphi:H^1(\mathcal{M}_\ell;\Z)\to H^1(\mathcal{M}_{\ell'};\Z)$ induces an isomorphism
\[
 \bar{\varphi}:\Lambda_\Z[A_1,\ldots,A_{n-1},B_i,\ldots,B_{n-3}] \to \Lambda_\Z[A'_1,\ldots,A'_{n-1},B'_i,\ldots,B'_{n-3}]
\]
sending $I_\ell$ to $I_{\ell'}$.

Given $a_j,b_j\in \Z$ for $j=i,\ldots,n-3$, define
\[
 \psi:\Lambda_\Z[A_1,\ldots,A_{n-1},B_i,\ldots,B_{n-3}]\to \Lambda_\Z[A'_1,\ldots,A'_{n-1},B'_i,\ldots,B'_{n-3}]
\]
by $\psi(A_j)=\bar{\varphi}(A_j)$ for $j\in \{1,\ldots,i-1,n-2,n-1\}$, and
\begin{eqnarray*}
\psi(A_j)&=&\bar{\varphi}(A_j+a_jT_{n-1}) \mbox{ for }j=i,\ldots,n-3 \\
\psi(B_j)&=&\bar{\varphi}(B_j+b_jT_{n-1}) \mbox{ for }j=i,\ldots,n-3.
\end{eqnarray*}
Using $A_kA_j=A_kB_j=0$ for $k\geq n-2$ and $j\geq i$, it is easy to see that $\psi$ sends $I_\ell$ to $I_{\ell'}$, so we get a surjective map $\psi:H^\ast_{(1)}(\mathcal{M}_\ell;\Z) \to H^\ast_{(1)}(\mathcal{M}_{\ell'};\Z)$. Since both have the same rank, it is in fact an isomorphism. Notice that $\bar{\varphi}(A_j+a_jT_{n-1})=\bar{\varphi}(A_j)+a_j A'_{n-1}$, so we can choose the $a_j$ and $b_j$ with
\begin{eqnarray*}
 \psi(\langle A_i,\ldots,A_{n-3},B_i,\ldots,B_{n-3}\rangle) &\subset & \langle A'_1,\ldots,A'_{n-2},B'_i,\ldots,B'_{n-3}\rangle.
\end{eqnarray*}
Repeating the argument with $T_{n-2}$, we can assume that
\begin{eqnarray*}
 \bar{\varphi}(\langle A_i,\ldots,A_{n-3},B_i,\ldots,B_{n-3}\rangle) &\subset & \langle A'_1,\ldots,A'_{n-3},B'_i,\ldots,B'_{n-3}\rangle.
\end{eqnarray*}
Now observe that for $j\in \{i,\ldots,n-3\}$ we get
\begin{eqnarray*}
\langle A_j, A_{n-2}, A_{n-1}, B_i,\ldots, B_{n-3} \rangle & \subset & {\rm Ann}(A_j;\ell)\\
\langle B_j, A_i,\ldots, A_{n-1}\rangle & \subset & {\rm Ann}(B_j;\ell),
\end{eqnarray*}
so the ranks are at least 3, and the rank equal to 3 implies $A_{n-2}=0$. By Lemma \ref{annix} we therefore get
\begin{eqnarray*}
 \bar{\varphi}(\langle A_i,\ldots,A_{n-3},B_i,\ldots,B_{n-3}\rangle) &\subset & \langle A'_i,\ldots,A'_{n-3},B'_i,\ldots,B'_{n-3}\rangle.
\end{eqnarray*}
If $J\subset \{1,\ldots,n-1\}$ with $J=\{j_1,\ldots,j_s\}$ satisfying
\[
j_1<\ldots<j_l\leq i-1<j_{l+1}<\ldots <j_r\leq n-3 <j_{r+1}<\ldots <j_s,
\]
let us write
\begin{eqnarray*}
 (AB)_J&=&A_{j_1}\cdots A_{j_l} B_{j_{l+1}}\cdots B_{j_r} A_{j_{r+1}}\cdots A_{j_s}.
\end{eqnarray*}
Note that if $J\cap \{i,\ldots,n-3\}=\emptyset$, we simply have $(AB)_J=A_J$.

Let $K_\ell$ be the ideal of $\Lambda_\Z[A_1,\ldots,A_{n-1},B_i,\ldots,B_{n-3}]$ generated by
\begin{eqnarray*}
 A_J&\mbox{for}& J\cup\{n\} \mbox{ long with respect to }\ell\\
(AB)_J&\mbox{for}& J\cup\{n\} \mbox{ long with respect to }\ell\\
A_j B_k&\mbox{for}& j,k\in\{i,\ldots, n-3\}.
\end{eqnarray*}
Notice that this includes the relations
\begin{eqnarray*}
A_j B_k&\mbox{for}& j=n-2,n-1,\, i\leq k\leq n-3,
\end{eqnarray*}
so $I_\ell\subset K_\ell$. Similarly define $K_{\ell'}$ as an ideal of $\Lambda_\Z[A'_1,\ldots,A'_{n-1},B'_i,\ldots,B'_{n-3}]$ with respect to $\ell'$. Recall the isomorphism
\[
 \bar{\varphi}:\Lambda_\Z[A_1,\ldots,A_{n-1},B_i,\ldots,B_{n-3}] \to \Lambda_\Z[A'_1,\ldots,A'_{n-1},B'_i,\ldots,B'_{n-3}].
\]
We claim that $\bar{\varphi}$ induces an isomorphism
\[
 \bar{\psi}:\Lambda_\Z[A_1,\ldots,A_{n-1},B_i,\ldots,B_{n-3}]/K_\ell \to \Lambda_\Z[A'_1,\ldots,A'_{n-1},B'_i,\ldots,B'_{n-3}]/K_{\ell'}.
\]
First we need to show that this induces a well defined ring homomorphism. To do this, it is enough to show that $\bar{\varphi}$ sends $K_\ell$ to $K_{\ell'}$. We already know that $\bar{\varphi}$ sends $I_\ell$ to $I_{\ell'}$, so we only have to consider the relations $A_J$ and $(AB)_J$ for $J\subset \{1,\ldots,n-3\}$ with $J\cup\{n\}$ long. Then $\{i,\ldots,n-3\}\subset J$ by the fact that $\ell$ is of type $\{i,n-2,n-1\}$. Let $J'=J-\{i,\ldots,n-3\}$, then
\begin{eqnarray*}
 \bar{\varphi}(A_{J'}(A_{\{i,\ldots,n-3\}}+(-1)^{n-2-i} B_{\{i,\ldots,n-3\}}))&\in& I_{\ell'}.
\end{eqnarray*}
But since $\bar{\varphi}$ sends $\langle A_i,\ldots,A_{n-3},B_i,\ldots,B_{n-3}\rangle$ to $\langle A'_i,\ldots,A'_{n-3},B'_i,\ldots,B'_{n-3}\rangle$, we get
\begin{eqnarray*}
 \bar{\varphi}(A_{\{i,\ldots,n-3\}}), \bar{\varphi}(B_{\{i,\ldots,n-3\}}) &\in& \langle A'_{\{i,\ldots,n-3\}},B'_{\{i,\ldots,n-3\}} \rangle,
\end{eqnarray*}
as the subgroup
\[
 H^{n-2-i}_{(1)}(\mathcal{M}_{\ell'};\Z)\cap \langle A'_i,\ldots,A'_{n-3},B'_i,\ldots,B'_{n-3}\rangle
\]
contains $\bar{\varphi}(A_{\{i,\ldots,n-3\}}), \bar{\varphi}(B_{\{i,\ldots,n-3\}})$, and is generated by $\langle A'_{\{i,\ldots,n-3\}},B'_{\{i,\ldots,n-3\}} \rangle$.

Now $\bar{\varphi}(A_{J'}) \in \langle A'_1,\ldots,A'_{n-1},B'_i,\ldots,B'_{n-3} \rangle$, but any summands involving factors of $A'_j$ or $B'_j$ with $j\geq i$ end up in $K_{\ell'}$ after multiplication with $A'_{\{i,\ldots,n-3\}}$ or $B'_{\{i,\ldots,n-3\}}$. So modulo $K_{\ell'}$, we get
\begin{eqnarray*}
 \bar{\varphi}(A_{J'})\cdot \bar{\varphi}(A_{\{i,\ldots,n-3\}}) & = & \sum_{\tilde{J}\subset \{1,\ldots,i-1\}}a_{\tilde{J}}A'_{\tilde{J}}\cdot \bar{\varphi}(A_{\{i,\ldots,n-3\}})\\
\bar{\varphi}(A_{J'})\cdot \bar{\varphi}((-1)^{n-2-i} B_{\{i,\ldots,n-3\}}) & = & \sum_{\tilde{J}\subset \{1,\ldots,i-1\}}a_{\tilde{J}}A'_{\tilde{J}}\cdot \bar{\varphi}((-1)^{n-2-i} B_{\{i,\ldots,n-3\}})
\end{eqnarray*}
As the sum of these equations is in $I_{\ell'}$, we have to have $a_{\tilde{J}}=0$ for all $\tilde{J}$ with $\tilde{J}\cup \{i,\ldots,n-3,n\}$ short with respect to $\ell'$. But then both $\bar{\varphi}(A_{J'})\cdot \bar{\varphi}(A_{\{i,\ldots,n-3\}})$ and $\bar{\varphi}(A_{J'})\cdot \bar{\varphi}(B_{\{i,\ldots,n-3\}})$ are contained in $K_{\ell'}$.

This shows that $\bar{\varphi}$ sends $K_\ell$ to $K_{\ell'}$. Repeating the argument with $\bar{\varphi}^{-1}$ we get a homomorphism
\[
 \bar{\psi}^\ast:\Lambda_\Z[A'_1,\ldots,A'_{n-1},B'_i,\ldots,B'_{n-3}]/K_{\ell'} \to \Lambda_\Z[A_1,\ldots,A_{n-1},B_i,\ldots,B_{n-3}]/K_\ell.
\]
with $\bar{\psi}^\ast\circ \bar{\psi}={\rm id}$ and $\bar{\psi}\circ \bar{\psi}^\ast={\rm id}$ on degree 1 elements. As both rings are generated by degree 1 elements, we get that $\bar{\psi}$ and $\bar{\psi}^\ast$ are mutually inverse isomorphisms. Also $\Lambda_\Z[A_1,\ldots,A_{n-1},B_i,\ldots,B_{n-3}]/K_\ell$ and $\Lambda_\Z[A'_1,\ldots,A'_{n-1},B'_i,\ldots,B'_{n-3}]/K_{\ell'}$ are exterior face rings, so there is a bijection between $\{A_1,\ldots,A_{n-1},B_i$ $,\ldots,B_{n-3}\}$ and $\{A'_1,\ldots,A'_{n-1},B'_i,\ldots,B'_{n-3}\}$ inducing the isomorphism by Theorem \ref{isoprobsol}. But a simple argument using the annihilators ${\rm Ann}(A_j;\ell)$ shows that this bijection restricts to bijections between
\begin{eqnarray*}
\{A_1,\ldots, A_{i-1}\} &\mbox{and}& \{A'_1,\ldots,A'_{i-1}\}\\
\{A_i,\ldots,A_{n-3},B_i,\ldots,B_{n-3}\} &\mbox{and} &\{A'_i,\ldots,A'_{n-3},B'_i,\ldots,B'_{n-3}\}\\
\{A_{n-2},A_{n-1}\} &\mbox{and}& \{A'_{n-2},A'_{n-1}\}.
\end{eqnarray*}
Since $A_jB_k=0=A'_jB'_k$ for all $j,k\in \{i,\ldots,n-3\}$, we also get a bijection between $\{A_i,\ldots,A_{n-3}\}$ and $\{A'_i,\ldots, A'_{n-3}\}$ or $\{B'_i,\ldots,B'_{n-3}\}$. Since we can swap these sets, we can assume that the isomorphism is induced by a bijection between $\{A_1,\ldots, A_{n-1}\}$ and $\{A'_1,\ldots, A'_{n-1}\}$, and by a bijection between $\{B_i,\ldots, B_{n-3}\}$ and $\{B'_i,\ldots, B'_{n-3}\}$. Because of the relations in $K_\ell$ and $K_{\ell'}$, this shows that we have a bijection between $\dot{\mathcal{S}}(\ell)$ and $\dot{\mathcal{S}}(\ell')$. By \cite[Lm.3]{fahasc} we get that $\ell$ and $\ell'$ are in the same chamber. This finishes the proof.
\end{proof}

\section{Cohomology of special length vectors of type $\{n-4,n-3,n-1\}$}\label{secn-4n-3}

We now want to show that type $\{n-4,n-3,n-1\}$ is also cohomologically rigid. We start by determining the fundamental group for such special length vectors. As in the case of special length vectors of type $\{i,n-2,n-1\}$ we want to begin by constructing certain elements of the fundamental group.

First we write $\pi_1(T^{n-1})\cong \Z^{n-1}$ as generated by $x_1,\ldots,x_{n-1}$, where each $x_i$ is represented by a rotation along the $i$-th coordinate, while keeping the other coordinates fixed.

Define a basepoint $z^\ast=(z_1^\ast,\ldots,z_{n-1}^\ast)\in \mathcal{M}_\ell$ by $z_1^\ast,\ldots,z_{n-5}^\ast,z_{n-2}^\ast=1$, $z^\ast_{n-1}=-1$ and choose $z_{n-4}^\ast,z_{n-3}^\ast\in S^1$ so that we have a closed linkage.

As in Section \ref{seci=1}, define $\gamma_j:(S^1,1)\to (\mathcal{M}_\ell,z^\ast)$ for $j=1,\ldots,n-5$ so that $i_\#[\gamma_j]=x_j\in \pi_1(T^{n-1})$. Also, if $\{n-1,n\}$ is short, we can define $\gamma_{n-1}:(S^1,1)\to (\mathcal{M}_\ell,z^\ast)$ with $i_\#[\gamma_{n-1}]=x_{n-1}$. It is also easy to see that $[\gamma_j],[\gamma_k]$ commute for $j,k\leq n-5$, and $[\gamma_{n-1}]$ commutes with $[\gamma_j]$, provided that $\{j,n-1,n\}$ is short.

\begin{proposition}\label{fundgrpn-3n-1}
 Let $n\geq 5$ and $\ell$ a special length vector of type $\{n-4,n-3,n-1\}$. Then the fundamental group of $\mathcal{M}_\ell$ has a presentation with generating set
\[
 \{a_1,\ldots,a_{n-1},b_{n-4},b_{n-3},b_{n-2}\}
\]
subject to the relations
\begin{eqnarray*}
 [a_{n-4},b_{n-4}][a_{n-3},b_{n-3}][a_{n-2},b_{n-2}]& & \\
\,[a_j,a_k] & \mbox{if} & \{j,k,n\} \mbox{ is short}\\
\,[a_j,b_k] & \mbox{if} & j\leq n-5, k\in \{n-4,n-3,n-2\}\\
a_{n-1} & \mbox{if} & \{n-1,n\}\mbox{ is long}.
\end{eqnarray*}
Furthermore, the homomorphism $i_\#:\pi_1(\mathcal{M}_\ell)\to\pi_1(T^{n-1})$ induced by inclusion sends $a_j$ to $x_j$ for $j\in\{1,\ldots,n-5\}$ and also for $j=n-1$, provided that $\{n-1,n\}$ is short. For $j\in\{n-4,n-3,n-2\}$ we also get that $i_\#(a_j),i_\#(b_j)$ is contained in the subgroup generated by $x_{n-4},x_{n-3},x_{n-2}$.
\end{proposition}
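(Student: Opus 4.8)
The plan is to argue by induction on $n$, following the template of the proofs of Propositions \ref{fundgrpin-1}, \ref{fundgrpn-4long} and \ref{fundgrpn-4short}. For the base case $n=5$ the type is $\{1,2,4\}$, and there is only one chamber of this type: since $\dot{\mathcal{S}}_2(\ell)=\emptyset$ and the maximal element of $\dot{\mathcal{S}}_1(\ell)$ is $\{3\}$, one has $\dot{\mathcal{S}}(\ell)=\{\emptyset,\{1\},\{2\},\{3\}\}$. A representative such as $\ell=(2,2,3,4,4)$ shows $\mathcal{M}_\ell$ is a closed orientable surface with $b_1=6$ (Theorem \ref{bettinumbers}, or Lemma \ref{bettispecial}(2)), hence the genus $3$ surface, whose fundamental group is $\langle a_1,b_1,a_2,b_2,a_3,b_3\mid [a_1,b_1][a_2,b_2][a_3,b_3]\rangle$; since $\{4,5\}=\{n-1,n\}$ is long this agrees with the claimed presentation upon setting $a_4=a_{n-1}=1$. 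Moreover $i^\ast(X_4)=X_{\{4\}}=0$ because $\{4,5\}$ is long (cf.\ Proposition \ref{thefacering}), so the image of $i_\#$ lies in $\langle x_1,x_2,x_3\rangle=\langle x_{n-4},x_{n-3},x_{n-2}\rangle$, which is the assertion about $i_\#$ in this case.

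For the inductive step ($n\geq 6$) I would use the cobordism $W$ between $\mathcal{M}_\ell$ and $\mathcal{M}_{\ell_{\ell_1-\varepsilon}}$ from Lemma \ref{morsefctg} together with Proposition \ref{smallfirst}, giving $\mathcal{M}_{\ell_{\ell_1-\varepsilon}}\cong S^1\times\mathcal{M}_{\ell'}$ for $\ell'=(\ell_2,\ldots,\ell_{n-1},\ell_n+\ell_1)$. By Lemma \ref{typeinduction}(2), $\ell'$ is special of type $\{(n-1)-4,(n-1)-3,(n-1)-1\}$, so the inductive hypothesis applies. After reindexing by $a_1=[\gamma_1]$ (the new $S^1$-factor), $a_j=a'_{j-1}$ and $b_j=b'_{j-1}$, the claimed presentation and the statement about $i_\#$ hold for $\mathcal{M}_{\ell_{\ell_1-\varepsilon}}$: in particular the relation $[a_{n-4},b_{n-4}][a_{n-3},b_{n-3}][a_{n-2},b_{n-2}]$ is the image of the surface relation for $\ell'$, the generator $a_{n-1}$ is present iff $\{1,n-1,n\}$ is short with respect to $\ell$, and the images of the six ``surface'' generators $a_{n-4},\ldots,b_{n-2}$ lie in $\langle x_{n-4},x_{n-3},x_{n-2}\rangle$ because the corresponding images for $\ell'$ lie in $\langle x'_{n-5},x'_{n-4},x'_{n-3}\rangle$.

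Next I would run the Morse function $g:W\to[0,\ell_1-\varepsilon]$ with $W_t=g^{-1}([t,\infty))$ and let $t$ decrease from $\ell_1-\varepsilon$ to $0$, tracking $\pi_1(\mathcal{M}_{\ell_t})$ by van Kampen exactly as in the proof of Proposition \ref{fundgrpin-1}: a critical point of index $n-2$ cannot occur since $\mathcal{M}_\ell$ is connected; a critical point of index $n-3$ attaches a $1$-handle to $W_t$ and hence adds one generator, and by the type restriction the only such critical point is the one at $J=\{2,\ldots,n-1\}-\{n-1\}$, which occurs precisely when $\{n-1,n\}$ becomes short while $\{1,n-1,n\}$ is long and which reintroduces $a_{n-1}=[\gamma_{n-1}]$ with $i_\#(a_{n-1})=x_{n-1}$; a critical point of index $n-4$, corresponding to $J$ with $\{2,\ldots,n-1\}-J=\{j,k\}$, attaches a $2$-handle and, by the $2$-torus argument of Proposition \ref{fundgrpin-1}, adds exactly the commutator relation $[a_j,a_k]$, which happens precisely when $\{j,k,n\}$ becomes short; critical points of index $\leq n-5$ attach handles of index $\geq 3$ and leave $\pi_1$ unchanged. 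The images of all generators under $i_\#$ are unaffected throughout, using the inclusion $W\hookrightarrow T^{n-1}\times[0,\ell_1-\varepsilon]$ extending the $\mathcal{M}_{\ell_t}\hookrightarrow T^{n-1}$. Finally Corollary \ref{fundgrptrns} identifies $\pi_1(\mathcal{M}_\ell)$ with $\pi_1(W)$, which by the above carries the presentation in the statement.

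\emph{The delicate point}, as in the proof of Proposition \ref{fundgrpn-4long}, is to verify that the three-fold product relation is never ``broken''. Using the explicit description of the critical points $q_J$ in Lemma \ref{morsefctg}, one must check that no critical point of index $n-3$ can create a second new generator $a_{n-2}$: this would require $\{1,n-2,n\}$ to be long, which is impossible because $\{n-4,n-3,n-1\}\leq\{2,\ldots,n-3,n-1\}$ forces the latter to be long, hence its complement $\{1,n-2,n\}$ is short. Likewise one must check that index-$(n-4)$ critical points only ever append commutators $[a_j,a_k]$ and never alter the surface relation. A secondary wrinkle is that, unlike the type $\{i,n-2,n-1\}$ case, one cannot arrange $i_\#(a_j)=x_j$ for $j\in\{n-4,n-3,n-2\}$; instead one only tracks, through the induction and the $S^1\times(-)$-reindexing, that these images remain in the rank-$3$ subgroup $\langle x_{n-4},x_{n-3},x_{n-2}\rangle$ associated with the short sub-arm on the coordinates $n-4,n-3,n-2$, a property stable under the addition of critical points.
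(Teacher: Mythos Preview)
Your proposal is correct and follows essentially the same inductive approach as the paper: the base case $n=5$ via the genus-$3$ surface, then the cobordism $W$ and the Morse function $g$ to track how generators (index $n-3$) and commutator relations (index $n-4$) are added as $t$ decreases. The only point the paper makes slightly more explicit is that the index-$(n-4)$ critical points necessarily have $j\le n-5$ and $k=n-1$ (because both $\ell$ and $\bar\ell$ are of type $\{n-4,n-3,n-1\}$), which is precisely the check you flag in your ``delicate point'' paragraph but leave to the reader.
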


\begin{remark}
 One could ask for a sharpening of Proposition \ref{fundgrpn-3n-1} by determining the image of $a_j$ and $b_j$ for $j\in\{n-4,n-3,n-2\}$ by giving precise generators. However, we do not need this information below.
\end{remark}

\begin{proof}[Proof of Proposition \ref{fundgrpn-3n-1}]
 The proof is by induction on $n$. For $n=5$, a Betti number argument shows that $\mathcal{M}_\ell\cong M_3$, the orientable surface of genus 3. Furthermore, $\{4,5\}$ is long, so the statement of the proposition holds.

For $n>5$, note that $\ell'=(\ell_2,\ldots,\ell_{n-1},\ell_n+\ell_1)$ is of type $\{n-5,n-4,n-2\}$, and we get a cobordism $W$ from Lemma \ref{critpointsf} between $\mathcal{M}_\ell$ and $S^1\times \mathcal{M}_{\ell'}$, and the proposition holds for $\bar{\ell}=(\varepsilon,\ell_2,\ldots,\ell_{n-1},\ell_n+\ell_1-\varepsilon)$. As in the proof of Theorem \ref{fundgrpin-1}, we can get one extra generator, which is then represented by $\gamma_{n-1}$ above. This shows that the statement about $i_\#:\pi_1(\mathcal{M}_\ell)\to \pi_1(T^{n-1})$ holds.

Furthermore, for every two element subset $\{i,j\}\subset \{2,\ldots,n-1\}$ with $\{i,j,n\}$ short with respect to $\ell$ and long with respect to $\bar{\ell}$ we get a commutator $[a_i,a_j]$. Notice that since $\ell$ and $\bar{\ell}$ are of type $\{n-4,n-3,n-1\}$, we necessarily get $i\leq n-5$ and $j=n-1$ in this situation.
\end{proof}

\begin{remark}
 If $\{n-1,n\}$ is long, the fundamental group is simply $\Z^{n-5}\times S_3$, where $S_3=\pi_1(M_3)$. In fact, it is easy to see that there is only one chamber of type $\{n-4,n-3,n-1\}$ up to permutation with the additional property that $\{n-1,n\}$ is long, compare Lemma \ref{onelowtype}. More precisely, one shows that for this chamber we get $\mathcal{M}_\ell\cong T^{n-5}\times M_3$ by an argument similar to Corollary \ref{thelowtype}.
\end{remark}

For this reason, we can assume that $\{n-1,n\}$ is short with respect to $\ell$. In this case we get that the fundamental group of $\mathcal{M}_\ell$ is a trivial HNN-extension of $Z^{n-5}\times S_3$ along the subgroup $\Z^k$ generated by $a_1,\ldots,a_k$, where $k$ is the maximal number lesser equal to $n-5$ such that $\{k,n-1,n\}$ is short. In case there is no such $k$ (that is, $k=0$), we simply get the free product of $\Z^{n-5}\times S_3$ with $\Z$.

\begin{corollary}\label{cohfundgrp}
 Let $\ell$ be special of type $\{n-4,n-3,n-1\}$ with $\{n-1,n\}$ short and $n\geq 5$. Then
\begin{eqnarray*}
 H^\ast(\pi_1(\mathcal{M}_\ell);\Z)&\cong & \Lambda_\Z[A_1,\ldots,A_{n-1},B_{n-4},B_{n-3},B_{n-2}]/I
\end{eqnarray*}
where $I$ is the ideal generated by
\begin{eqnarray*}
 A_iA_j & \mbox{for} & i,j\in \{n-4,n-3,n-2\} \\
B_iB_j&\mbox{for}& i,j\in\{n-4,n-3,n-2\}\\
A_iB_j&\mbox{for}&i,j\in\{n-4,n-3,n-2\}\mbox{ with }i\not=j\\
A_{n-1}A_j & \mbox{for} & j\leq n-2 \mbox{ with }\{j,n-1,n\} \mbox{ long}\\
A_{n-1}B_j&\mbox{for}&j\in\{n-4,n-3,n-2\}\\
A_{n-2}B_{n-2}-A_{n-3}B_{n-3}&\mbox{and}&A_{n-3}B_{n-3}-A_{n-4}B_{n-4}
\end{eqnarray*}

\end{corollary}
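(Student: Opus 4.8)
The plan is to read the cohomology ring off the group presentation. Since $\{n-1,n\}$ is short, $\pi_1(\mathcal{M}_\ell)$ is, as recorded just above, a \emph{trivial} HNN-extension of $G:=\Z^{n-5}\times S_3$ along the subgroup $H=\Z^k=\langle a_1,\dots,a_k\rangle$, with stable letter $t=a_{n-1}$ (degenerating to $G\ast\Z$ when $k=0$); here $S_3=\pi_1(M_3)$ carries the standard generators $a_{n-4},b_{n-4},\dots,a_{n-2},b_{n-2}$. We may assume $n\geq 6$, since for $n=5$ an ordered special vector of type $\{1,2,4\}$ must have $\{4,5\}$ long, so the hypotheses are incompatible. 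First I would record that $H^\ast(G;\Z)\cong\Lambda_\Z[A_1,\dots,A_{n-5}]\otimes H^\ast(S_3;\Z)$, and that $H^\ast(S_3;\Z)$ is the quotient of $\Lambda_\Z[A_{n-4},A_{n-3},A_{n-2},B_{n-4},B_{n-3},B_{n-2}]$ by the standard surface relations $A_iA_j$, $B_iB_j$, $A_iB_j$ (for $i\neq j$) and $A_iB_i-A_jB_j$, with $i,j$ running over $\{n-4,n-3,n-2\}$. Hence $H^\ast(G;\Z)$ is precisely the quotient of $\Lambda_\Z[A_1,\dots,A_{n-2},B_{n-4},B_{n-3},B_{n-2}]$ by those relations.

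Next I would feed this into the long exact sequence for an HNN-extension \cite[Ch.VII.9]{brown}, exactly as in the $i=n-4$, $\{n-4,n-3,n\}$-long case of the proof of Theorem \ref{cohtypein-4}. As the extension is trivial, the comparison map $\alpha$ vanishes, so the sequence breaks into short exact sequences $0\to H^{q-1}(H;\Z)\xrightarrow{\delta}H^q(\pi_1(\mathcal{M}_\ell);\Z)\to H^q(G;\Z)\to 0$, which split since everything in sight is free abelian. Thus $H^\ast(\pi_1(\mathcal{M}_\ell);\Z)\cong H^\ast(G;\Z)\oplus H^{\ast-1}(\Z^k;\Z)$ as graded abelian groups, the second summand generated by the classes $A_{n-1}\cup A_J$ with $J\subseteq\{1,\dots,k\}$, where $A_{n-1}:=\delta(1)\in H^1(\pi_1(\mathcal{M}_\ell);\Z)$ is dual to $t$.

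For the ring structure I would proceed as follows. Killing $t$ makes $G$ a retract of $\pi_1(\mathcal{M}_\ell)$, and the pullback identifies the degree-one classes $A_1,\dots,A_{n-2},B_{n-4},B_{n-3},B_{n-2}$ with those of $H^\ast(G;\Z)$, so every surface relation above already holds in $H^\ast(\pi_1(\mathcal{M}_\ell);\Z)$. For the relations involving $A_{n-1}$ I would use the module identity $A_{n-1}\cup x=\pm\,\delta(\iota^\ast x)$ for $x\in H^\ast(\pi_1(\mathcal{M}_\ell);\Z)$, where $\iota:\Z^k\hookrightarrow\pi_1(\mathcal{M}_\ell)$; it holds because $A_{n-1}=\delta(1)$ and $\delta$ is linear over restriction. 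The restriction $\iota^\ast$ kills exactly $B_{n-4},B_{n-3},B_{n-2}$ and the $A_j$ with $a_j\notin H$; and for $j\leq n-2$ the condition $a_j\notin H$ agrees with ``$\{j,n-1,n\}$ is long'' --- for $j\leq n-5$ by maximality of $k$, and for $j\in\{n-4,n-3,n-2\}$ because $a_j\notin H$ automatically while $\{j,n-1,n\}$ is forced long by an elementary length estimate for ordered special vectors of this type. Hence all relations of the displayed ideal $I$ hold, giving a surjection $\Lambda_\Z[A_1,\dots,A_{n-1},B_{n-4},B_{n-3},B_{n-2}]/I\twoheadrightarrow H^\ast(\pi_1(\mathcal{M}_\ell);\Z)$.

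The main obstacle is injectivity of this surjection, i.e.\ that no further relations are needed. I would settle it by a rank count: the relations in $I$ push every monomial involving $A_{n-1}$ down to some $A_{n-1}A_J$ with $J\subseteq\{1,\dots,k\}$, so the source decomposes as $H^\ast(G;\Z)\oplus A_{n-1}\cdot\Lambda_\Z[A_1,\dots,A_k]$; the classes $A_{n-1}A_J$ are linearly independent of each other and of the $A_{n-1}$-free part because $\Z^{k+1}=\langle a_1,\dots,a_k,a_{n-1}\rangle$ is a retract of $\pi_1(\mathcal{M}_\ell)$ (in the presentation of Proposition \ref{fundgrpn-3n-1} every relation is a commutator, and since $\{n-1,n\}$ is short there is no relation killing $a_{n-1}$). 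Therefore the source and $H^\ast(\pi_1(\mathcal{M}_\ell);\Z)$ have equal rank in each degree by the Wang-sequence computation, and the surjection is an isomorphism. The remaining ingredients --- the cup-product structure of $H^\ast(S_3;\Z)$ with the correct signs in $A_iB_i-A_jB_j$, and the short/long bookkeeping for the triples $\{j,n-1,n\}$ --- are routine.
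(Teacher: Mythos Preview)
Your proposal is correct and follows essentially the same strategy as the paper: both use the HNN long exact sequence from \cite[Ch.VII.9]{brown}, observe that $\alpha=0$ for a trivial extension, obtain the additive splitting $H^\ast(G;\Z)\oplus H^{\ast-1}(\Z^k;\Z)$, and verify the ring structure via retracts. The only minor difference is that, for the relations involving $A_{n-1}$, the paper invokes the retract $(\Z^{n-5-k}\times S_3)\ast\Z$ of $\pi_1(\mathcal{M}_\ell)$, whereas you use the module identity $A_{n-1}\cup x=\pm\delta(\iota^\ast x)$ together with the $\Z^{k+1}$ retract; both arguments yield the same conclusions, and your explicit rank count at the end is a welcome clarification of what the paper leaves implicit.
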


\begin{proof}
 From the long exact sequence for an HNN-extension \cite[Ch.VII.9]{brown}, we get
\begin{eqnarray*}
 H^\ast(\pi_1(\mathcal{M}_\ell);\Z)&\cong &H^\ast(\Z^{n-5}\times S_3;\Z)\oplus H^{\ast-1}(\Z^k;\Z)
\end{eqnarray*}
as abelian groups, since the $\alpha$-homomorphism in that sequence is zero as we have a trivial HNN-extension and trivial coefficients. Now $H^0(\Z^k;\Z)\subset H^1(\pi_1(\mathcal{M}_\ell);\Z)$ is generated by $A_{n-1}$, and the other generators of $H^{\ast-1}(\Z^k;\Z)$ correspond to $A_JA_{n-1}$ with $J\subset\{1,\ldots,k\}$. There are no relations among these, as $\Z^k\times \Z$, generated by $a_1,\ldots,a_k,a_{n-1}$ is a retract of $\pi_1(\mathcal{M}_\ell)$.

The relations for $I$ that do not involve $A_{n-1}$ hold, as they hold in 
\begin{eqnarray*}
H^\ast(\Z^{n-5}\times S_3;\Z)&\cong& H^\ast(T^{n-5};\Z)\otimes H^\ast(M_3;\Z),
\end{eqnarray*}
and the relations involving $A_{n-1}$ hold, as $(\Z^{n-5-k}\times S_3)\ast \Z$ is a retract of $\pi_1(\mathcal{M}_\ell)$. This finishes the proof.
\end{proof}

\begin{proposition}\label{cohtypen-3n-1}
 Let $\ell$ be special of type $\{n-4,n-3,n-1\}$ with $n\geq 5$. Then
\begin{eqnarray*}
 H^\ast_{(1)}(\mathcal{M}_\ell;\Z)&\cong & \Lambda_\Z[A_1,\ldots,A_{n-1},B_{n-4},B_{n-3},B_{n-2}]/I_\ell
\end{eqnarray*}
where $I_\ell$ is the ideal generated by
\begin{eqnarray*}
A_J&\mbox{for} & J\cup\{n\}\mbox{ long with } J\cap\{n-4,n-3,n-2\}=\emptyset\\
 A_iA_j & \mbox{for} & i,j\in \{n-4,n-3,n-2\} \\
B_iB_j&\mbox{for}& i,j\in\{n-4,n-3,n-2\}\\
A_iB_j&\mbox{for}&i,j\in\{n-4,n-3,n-2\}\mbox{ with }i\not=j\\
A_{n-1}A_j & \mbox{for} & j\leq n-2 \mbox{ with }\{j,n-1,n\} \mbox{ long}\\
A_{n-1}B_j&\mbox{for}&j\in\{n-4,n-3,n-2\}\\
A_{n-2}B_{n-2}-A_{n-3}B_{n-3}&\mbox{and}&A_{n-3}B_{n-3}-A_{n-4}B_{n-4}
\end{eqnarray*}
\end{proposition}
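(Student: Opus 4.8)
The plan is to mirror the proof of Theorem \ref{cohtypein-4}: first show that $H^\ast_{(1)}(\mathcal{M}_\ell;\Z)$ is a quotient of the ring in the statement, and then use the cobordism of Section \ref{diffeosec} together with a Betti number count to show that no further relations are needed. At the outset I would dispose of the case in which $\{n-1,n\}$ is long. By the remark following Corollary \ref{cohfundgrp} we then have $\mathcal{M}_\ell\cong T^{n-5}\times M_3$, so $H^\ast_{(1)}(\mathcal{M}_\ell;\Z)=H^\ast(\mathcal{M}_\ell;\Z)=\Lambda_\Z[A_1,\dots,A_{n-5}]\otimes H^\ast(M_3;\Z)$; comparing this with the presentation in the statement is routine once one notices that the relation $A_{n-1}$ is forced by taking $J=\{n-1\}$, which is long together with $\{n\}$ and disjoint from $\{n-4,n-3,n-2\}$. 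Since a special length vector of type $\{1,2,4\}$ automatically has $\{4,5\}$ long, this also takes care of the base case $n=5$.

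From now on assume $\{n-1,n\}$ is short. As $H^1$ of a space depends only on its fundamental group and $H^\ast_{(1)}$ is generated in degrees $0$ and $1$, the natural map $H^\ast(\pi_1(\mathcal{M}_\ell);\Z)\to H^\ast_{(1)}(\mathcal{M}_\ell;\Z)$ is surjective, and combining this with Corollary \ref{cohfundgrp} shows that every relation in the statement other than those of the form $A_J$ already holds in $H^\ast_{(1)}(\mathcal{M}_\ell;\Z)$. For the relations $A_J$ with $J\cup\{n\}$ long and $J\cap\{n-4,n-3,n-2\}=\emptyset$, note that then $J\subset\{1,\dots,n-5,n-1\}$, so $i^\ast(X_j)=A_j$ for each $j\in J$ by Proposition \ref{fundgrpn-3n-1}, whence $A_J=i^\ast(X_J)=0$ because $X_J$ vanishes in the balanced subalgebra by Proposition \ref{thefacering}. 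Hence $H^\ast_{(1)}(\mathcal{M}_\ell;\Z)$ is a quotient of $\Lambda_\Z[A_1,\dots,A_{n-1},B_{n-4},B_{n-3},B_{n-2}]/I_\ell$.

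To show that no further quotient is needed I would induct on $n$. For $n\ge 6$ I would use the cobordism $W$ between $\mathcal{M}_\ell$ and $\mathcal{M}_{\ell_{\ell_1-\varepsilon}}\cong S^1\times\mathcal{M}_{\ell'}$, where $\ell'=(\ell_2,\dots,\ell_{n-1},\ell_n+\ell_1)$ is special of type $\{(n-1)-4,(n-1)-3,(n-1)-1\}$ by Lemma \ref{typeinduction}(2); by induction the claimed presentation holds for $\mathcal{M}_{\ell'}$, hence for $\mathcal{M}_{\ell_{\ell_1-\varepsilon}}$ after reindexing and adjoining the $S^1$-factor as a new generator $A_1$. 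Now let $t$ decrease through the Morse function $g:W\to[0,\ell_1-\varepsilon]$ of Lemma \ref{morsefctg}. Exactly as in the proof of Proposition \ref{fundgrpn-3n-1}, a critical point of index $n-3$ introduces the generator $a_{n-1}$ (represented by $\gamma_{n-1}$), a critical point of index $n-4$ introduces a single commutator relation $[a_i,a_{n-1}]$ with necessarily $i\le n-5$, and critical points of lower index leave $\pi_1(W_t)$ unchanged; moreover $H^\ast_{(1)}(W_t;\Z)\cong H^\ast_{(1)}(\mathcal{M}_{\ell_t};\Z)$ for all regular $t$, obtained by factoring the surjection from $H^\ast(\pi_1;\Z)$ and checking injectivity as in Theorem \ref{cohtypein-4}. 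By Lemma \ref{subsetssmall} the critical points of $g$ are in bijection with $\dot{\mathcal{S}}(\ell)-\dot{\mathcal{S}}(\ell_{\ell_1-\varepsilon})$, so Theorem \ref{bettinumbers} forces each passage to create exactly one new $\Z$-summand in $H^\ast(W_t;\Z)$ and two in $H^\ast(\mathcal{M}_{\ell_t};\Z)$, carried by one nonzero balanced class $X_K$ (Proposition \ref{thefacering}). Consequently each passage removes precisely one relation of the form $A_J$ (with $J\cup\{n\}$ long, $J\cap\{n-4,n-3,n-2\}=\emptyset$) from the ideal and nothing else, and matching this against the list in the statement completes the induction.

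The step I expect to be the main obstacle is this last piece of bookkeeping: verifying that $H^\ast_{(1)}(W_t;\Z)\to H^\ast_{(1)}(\mathcal{M}_{\ell_t};\Z)$ is an isomorphism throughout and that passing each critical point of $g$ deletes exactly one of the listed generators of $I_\ell$, so that the Betti number count of Lemma \ref{subsetssmall} and Theorem \ref{bettinumbers} leaves no room for an unlisted relation. This is essentially the argument used for type $\{i,n-2,n-1\}$, but one must now carry the extra cohomology coming from the $A_{n-1}$-tower of the HNN-extension $\pi_1(\mathcal{M}_\ell)$ of $\Z^{n-5}\times S_3$ alongside the fixed genus-$3$ surface relations, rather than working entirely inside a right-angled Artin group.
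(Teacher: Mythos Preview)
Your proposal is correct and follows essentially the same route as the paper's proof: show the listed relations hold via the surjection $H^\ast(\pi_1(\mathcal{M}_\ell);\Z)\twoheadrightarrow H^\ast_{(1)}(\mathcal{M}_\ell;\Z)$ together with Proposition \ref{thefacering}, and then run the cobordism/Morse induction of Section \ref{diffeosec} to rule out further relations. The paper makes explicit the one piece of bookkeeping you flag as the obstacle, namely that for a critical point the set $\tilde{J}=\{2,\ldots,n-1\}-J$ necessarily satisfies $\tilde{J}\cap\{n-4,n-3,n-2\}=\emptyset$ and $n-1\in\tilde{J}$ (since $\ell$ and $\ell_{\ell_1-\varepsilon}$ share the type $\{n-4,n-3,n-1\}$), so the relation removed is indeed one of the listed $A_J$; also, the remark you invoke sits \emph{before} Corollary \ref{cohfundgrp}, not after it.
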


\begin{proof}
 The proof is similar to the proof of Theorem \ref{cohtypein-4}, and by induction over $n$. For $n=5$, we get $\mathcal{M}_\ell\cong M_3$ and the result is obvious.

For $n>5$ we have the diagram
\[
 \xymatrix{H^\ast(T^{n-1};\Z)\cong\Lambda_\Z[X_1,\ldots,X_{n-1}] \ar[d]^{i^\ast} \ar[rd]^{\varphi^\ast} \\
H^\ast_{(1)}(\mathcal{M}_\ell;\Z) & H^\ast(\pi_1(\mathcal{M}_\ell);\Z) \ar[l]}
\]
Furthermore, the map $H^\ast(\pi_1(\mathcal{M}_\ell);\Z)\to H^\ast_{(1)}(\mathcal{M}_\ell;\Z)$ induced by the classifying map is surjective and an isomorphism for $\ast\leq 1$. Therefore
\begin{eqnarray*}
 H^\ast_{(1)}(\mathcal{M}_\ell;\Z)&\cong & \Lambda_\Z[A_1,\ldots,A_{n-1},B_{n-4},B_{n-3},B_{n-2}]/I_\ell
\end{eqnarray*}
for some ideal $I_\ell$, and the relations listed in the statement, except the $A_J$ relations, are in $I_\ell$ by Corollary \ref{cohfundgrp}. To see that $A_J$ for $J\cup\{n\}$ long with $J\cap\{n-4,n-3,n-2\}=\emptyset$ also holds, note that by the statement on $i_\#$ in Proposition \ref{fundgrpn-3n-1} we get $i^\ast(X_j)=A_j$ for $j\in \{1,\ldots,n-5\}$, and also for $j=n-1$ if $\{n-1,n\}$ is short. Hence the relation $A_J$ holds by Proposition \ref{thefacering}.

We therefore have to show that no other relations hold. Let $W$ be the cobordism from Lemma \ref{critpointsf} between $\mathcal{M}_\ell$ and $\mathcal{M}_{\ell_{\ell_1-\varepsilon}}$. Since $\mathcal{M}_{\ell_{\ell_1-\varepsilon}}\cong S^1\times \mathcal{M}_{\ell'}$, the result holds for $\ell_{\ell_1-\varepsilon}$. As in the proof of Theorem \ref{cohtypein-4}, whenever we slide over a critical point (going from $\mathcal{M}_{\ell_{\ell_1-\varepsilon}}$ to $\mathcal{M}_\ell$) we will remove one relation. But note that the critical points of $g$ correspond to subsets $J\subset\{2,\ldots,n-1\}$ which are short with respect to $\ell$, but with $\{1\}\cup J$ long with respect to $\ell$. This means that $\tilde{J}=\{2,\ldots,n-1\}-J$ satisfies $\tilde{J}\cup\{n\}$ is short with respect to $\ell$ while it is long with respect to $\ell_{\ell_1-\varepsilon}$. It follows that $\tilde{J}\cap \{n-2,n-3,n-4\}=\emptyset$ and $\{n-1\}\in \tilde{J}$, for otherwise we have $\tilde{J}\cup \{n\}$ short or long with respect to both $\ell$ and $\ell_{\ell_1-\varepsilon}$ (recall they both have the same type). In particular, $A_{\tilde{J}}$ is zero in the cohomology of $\ell_{\ell_1-\varepsilon}$, but not in the cohomology of $\ell$.
\end{proof}

\begin{theorem}\label{cohrign-3n-1}
 Type $\{n-4,n-3,n-1\}$ is cohomologically rigid.
\end{theorem}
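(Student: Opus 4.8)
The plan is to adapt the proof of Theorem~\ref{cohrigin-1} to the present type. Given ordered special length vectors $\ell,\ell'$ of type $\{n-4,n-3,n-1\}$ and a graded ring isomorphism $\varphi\colon H^\ast(\mathcal{M}_\ell;\Z)\to H^\ast(\mathcal{M}_{\ell'};\Z)$, I would restrict $\varphi$ to the subalgebra $H^\ast_{(1)}$, rephrase everything through the presentations of Proposition~\ref{cohtypen-3n-1}, pass to a quotient which is an exterior face ring so that Gubeladze's Theorem~\ref{isoprobsol} applies, and finally read off a bijection $\dot{\mathcal{S}}(\ell)\to\dot{\mathcal{S}}(\ell')$ and conclude with \cite[Lm.3]{fahasc}. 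First I would dispose of the case where $\{n-1,n\}$ is long: the quantity $d_1(\ell)=\dim A^1_{n-5}(\ell)$ is an invariant of the cohomology ring, and by the proof of Lemma~\ref{bettispecial}(2) (together with Lemma~\ref{annihi}) it equals $0$ precisely when $\{n-1,n\}$ is long; hence this property passes across $\varphi$, and when it holds both length vectors lie in the single chamber of this type with that property (see the discussion following Proposition~\ref{fundgrpn-3n-1}), the cases $n=5,6$ being immediate from Betti numbers. So from now on $\{n-1,n\}$ is short for both, and I use $H^\ast_{(1)}(\mathcal{M}_\ell;\Z)\cong\Lambda_\Z[A_1,\dots,A_{n-1},B_{n-4},B_{n-3},B_{n-2}]/I_\ell$ from Proposition~\ref{cohtypen-3n-1}.

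The obstruction to applying Theorem~\ref{isoprobsol} directly is the pair of non-monomial relations $A_{n-2}B_{n-2}-A_{n-3}B_{n-3}$, $A_{n-3}B_{n-3}-A_{n-4}B_{n-4}$ coming from the genus-$3$ surface factor. I therefore set $K_\ell=I_\ell+(A_{n-4}B_{n-4})$; since these relations identify $A_{n-4}B_{n-4}$, $A_{n-3}B_{n-3}$ and $A_{n-2}B_{n-2}$ modulo $I_\ell$, every product of two of the six \emph{surface generators} $A_{n-4},A_{n-3},A_{n-2},B_{n-4},B_{n-3},B_{n-2}$ dies in $\Lambda_\Z[\dots]/K_\ell$, which is therefore an exterior face ring, and likewise for $K_{\ell'}$. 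The heart of the proof is to show that $\varphi$ descends to an isomorphism of these two face rings. As in Theorem~\ref{cohrigin-1}, this requires normalizing $\varphi$ with the help of annihilator computations modelled on Lemmas~\ref{annihilator} and~\ref{annix} and extracted from Proposition~\ref{cohtypen-3n-1}: one checks $\mathrm{Ann}(\ell)=\langle A_{n-1}\rangle$, so that $\varphi(A_{n-1})$ is a unit multiple of $A'_{n-1}$; one identifies the six-dimensional surface subspace of $H^1(\mathcal{M}_\ell;\Z)$ intrinsically (through the ranks of the annihilators $\mathrm{Ann}(x;\ell)$ and through which degree-one classes occur in the non-monomial relations); and then, after composing with a face-ring-fixing automorphism of the target of the shape $A'_j\mapsto A'_j+(\text{torus class})$ exactly as in the proof of Theorem~\ref{cohrigin-1}, one arranges that $\varphi$ carries the surface subspace of $\ell$ onto that of $\ell'$ and the torus subspace $\langle A_1,\dots,A_{n-5}\rangle$ onto $\langle A'_1,\dots,A'_{n-5}\rangle$.

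With $\varphi$ so normalized, the monomial relations $A_J$ for $J\cup\{n\}$ long with $J\cap\{n-4,n-3,n-2\}=\emptyset$, the relations $A_{n-1}A_j$ and $A_{n-1}B_j$, and the degree-two surface monomials are respected by inspection, and the one genuinely delicate point — the exact counterpart of the $(AB)_J$-relations in the proof of Theorem~\ref{cohrigin-1} — is to check that the balancedness relations $X_J=0$ for $J\cup\{n\}$ long with $J\cap\{n-4,n-3,n-2\}\ne\emptyset$, which become honest monomials modulo $K_\ell$, are sent into $K_{\ell'}$; I would handle this by expressing $X_{\{n-4\}},X_{\{n-3\}},X_{\{n-2\}}$ in terms of the $A$'s and $B$'s and using that $\varphi$ preserves the surface subspace so that all cross terms fall into $K_{\ell'}$ after multiplication. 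Theorem~\ref{isoprobsol} then yields a bijection between the vertex sets $\{A_1,\dots,A_{n-1},B_{n-4},B_{n-3},B_{n-2}\}$ and $\{A'_1,\dots,A'_{n-1},B'_{n-4},B'_{n-3},B'_{n-2}\}$; the annihilators $\mathrm{Ann}(A_j;\ell)$ force it to respect the partition into torus generators, surface generators and $A_{n-1}$, and the relations $A_iB_j=0$ among surface generators show it does not mix $A$'s with $B$'s beyond the simultaneous swap $A_j\leftrightarrow B_j$ permitted by complex conjugation. This produces the bijection $\dot{\mathcal{S}}(\ell)\to\dot{\mathcal{S}}(\ell')$, and \cite[Lm.3]{fahasc} puts $\ell$ and $\ell'$ into the same chamber.

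I expect the main obstacle to be the normalization of $\varphi$ together with the verification $\varphi(K_\ell)=K_{\ell'}$: the splitting of $H^1(\mathcal{M}_\ell;\Z)$ into a torus part, the surface part and the line $\langle A_{n-1}\rangle$ is not canonical, so one must squeeze out enough ring-theoretic invariants to recover it up to the automorphisms fixing the associated face ring — the same difficulty as in Theorem~\ref{cohrigin-1}, but now with two degree-two surface relations rather than a single higher-degree relation. A secondary technical nuisance, already visible in the proof of Theorem~\ref{cohtypein-4}, is that one may need to perturb $\ell$ slightly or split into subcases according to whether $\{j,n-1,n\}$ is short for the various small indices $j$, since this governs which relations $A_{n-1}A_j$ occur; an induction on $n$ along the cobordism $W$ of Section~\ref{diffeosec}, as in the proofs of Propositions~\ref{fundgrpn-3n-1} and~\ref{cohtypen-3n-1}, should absorb this bookkeeping.
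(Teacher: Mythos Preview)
Your approach is correct but takes a longer route than the paper's. After normalizing $\varphi$ so that it preserves the surface subspace $\langle A_{n-4},A_{n-3},A_{n-2},B_{n-4},B_{n-3},B_{n-2}\rangle$ (which both you and the paper do via the annihilator Lemma~\ref{annitypen-3n-1}), the paper simply \emph{kills} that subspace: quotienting $H^\ast_{(1)}(\mathcal{M}_\ell;\Z)$ by the ideal it generates yields $\Lambda_\Z[A_1,\dots,A_{n-5},A_{n-1}]/\tilde I_\ell$, where $\tilde I_\ell$ is generated by the $A_J$ with $J\subset\{1,\dots,n-5,n-1\}$ and $J\cup\{n\}$ long. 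This is already an exterior face ring on the complex $\tilde{\mathcal S}^{(n-1)}(\ell)$, Gubeladze applies immediately, and since $\dot{\mathcal S}(\ell)\setminus\tilde{\mathcal S}^{(n-1)}(\ell)$ is determined by the type alone, one is done. Your variant instead enlarges $I_\ell$ by $A_{n-4}B_{n-4}$ and keeps all twelve generators; this works, but the six surface vertices become mutually indistinguishable in $\Lambda/K_\ell$ (every product of two of them vanishes), so they contribute nothing beyond what the paper's smaller quotient already sees---in particular your claim that the Gubeladze bijection ``does not mix $A$'s with $B$'s'' among them is false in $\Lambda/K_\ell$, though harmless. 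Two further simplifications: your ``delicate point'' about relations $X_J=0$ with $J\cap\{n-4,n-3,n-2\}\neq\emptyset$ is moot, since such relations already lie in $I_\ell$ and hence map into $I_{\ell'}\subset K_{\ell'}$ automatically; and no induction on $n$ or perturbation of $\ell$ is needed in the rigidity argument itself---that machinery was only used to establish Proposition~\ref{cohtypen-3n-1}, which you may now take as given.
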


Before we start the proof, recall the annihilators defined in Section \ref{sectypen-4}. The following Lemma is proven analogously to Lemmata \ref{annihilator} and \ref{annix}.

\begin{lemma}\label{annitypen-3n-1}
 Let $\ell$ be a special length vector of type $\{n-4,n-3,n-1\}$ with $\{n-1,n\}$ short. Then
\begin{eqnarray*}
 {\rm Ann}(\ell)&=&\langle A_{n-1}\rangle.
\end{eqnarray*}
Furthermore, for $m\in\{1,\ldots,n-5\}$ and $a_m,\ldots, a_{n-2}, b_{n-4},\ldots,b_{n-2}\in \Z$ with $a_m\not=0$, let
\begin{eqnarray*}
 x&=&\sum_{j=m}^{n-2}a_jA_j+\sum_{j=n-4}^{n-2}b_j B_j,
\end{eqnarray*}
then
\begin{eqnarray*}
{\rm rank}\,{\rm Ann}(x;\ell)\,\,\,=\,\,\,{\rm rank}\,{\rm Ann}(A_m;\ell)&=&\left\{\begin{array}{cl}1& A_mA_{n-1}\not=0 \\ 2 & A_mA_{n-1}=0 \end{array} \right.
\end{eqnarray*}
\end{lemma}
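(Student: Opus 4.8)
The plan is to read the result off from the presentation of $H^\ast_{(1)}(\mathcal{M}_\ell;\Z)$ in Proposition \ref{cohtypen-3n-1}, following the pattern of the proofs of Lemmata \ref{annihilator} and \ref{annix}. Since $\ell$ is special, $\mathcal{M}_\ell$ is connected and $H^{n-3}(\mathcal{M}_\ell;\Z)\cong\Z$; moreover this top group lies in $H^\ast_{(1)}(\mathcal{M}_\ell;\Z)$ and is generated, up to sign, by the monomial $A_1\cdots A_{n-5}\,A_{n-4}B_{n-4}$ (equivalently $\pm A_1\cdots A_{n-5}\,A_{n-3}B_{n-3}=\pm A_1\cdots A_{n-5}\,A_{n-2}B_{n-2}$), which is non-zero because $\{1,\ldots,n-5,n\}$ is short so none of the relations of Proposition \ref{cohtypen-3n-1} reduces it to $0$. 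To see $A_{n-1}\in{\rm Ann}(\ell)$, observe that a product $A_{n-1}\cdot w$ with $w$ a degree-$(n-4)$ monomial in the generators has total degree $n-3$; using $A_{n-1}B_j=0$ for $j\in\{n-4,n-3,n-2\}$, the relations $A_{n-1}A_j=0$ for $\{j,n-1,n\}$ long, and the fact that $A_{n-1}A_K\ne 0$ would force $K\cup\{n-1,n\}$ to be a short set of size $n-2$ containing $n$, contradicting specialness, one checks that $A_{n-1}\cdot w=0$ for every such $w$.

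Conversely, if $x=\sum_{j=1}^{n-1}c_jA_j+\sum_{j=n-4}^{n-2}d_jB_j$ lies in ${\rm Ann}(\ell)$, subtract a multiple of $A_{n-1}$ to assume $c_{n-1}=0$, and isolate each remaining coefficient by multiplying $x$ with an appropriate degree-$(n-4)$ monomial: one detects $c_j$ for $j\le n-5$ using $A_1\cdots\widehat{A_j}\cdots A_{n-5}\,A_{n-4}B_{n-4}$, one detects $c_{n-4}$ using $A_1\cdots A_{n-5}B_{n-4}$, and the evident variants handle $c_{n-3},c_{n-2},d_{n-4},d_{n-3},d_{n-2}$; the relations $A_pA_q=A_pB_q=B_pB_q=0$ for distinct $p,q\in\{n-4,n-3,n-2\}$ make every cross term vanish, so in each case the coefficient of the top class equals the coefficient being isolated, which is therefore $0$. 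Hence ${\rm Ann}(\ell)=\langle A_{n-1}\rangle$. Next, for $m\in\{1,\ldots,n-5\}$ the relations of Proposition \ref{cohtypen-3n-1} show that nothing kills $A_mA_j$ ($j\le n-5$, $j\ne m$, as $\{m,j,n\}$ is short) nor $A_mA_j$, $A_mB_j$ for $j\in\{n-4,n-3,n-2\}$, and that these products are pairwise linearly independent in $H^2_{(1)}(\mathcal{M}_\ell;\Z)$, while $A_mA_{n-1}$ vanishes exactly when $\{m,n-1,n\}$ is long. Thus ${\rm Ann}(A_m;\ell)\subseteq\langle A_m,A_{n-1}\rangle$, with ${\rm rank}\,{\rm Ann}(A_m;\ell)$ equal to $1$ if $A_mA_{n-1}\ne 0$ and $2$ if $A_mA_{n-1}=0$.

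It then remains to show ${\rm rank}\,{\rm Ann}(x;\ell)={\rm rank}\,{\rm Ann}(A_m;\ell)$ for $x=\sum_{j=m}^{n-2}a_jA_j+\sum_{j=n-4}^{n-2}b_jB_j$ with $a_m\ne 0$, and here the argument of Lemma \ref{annix} carries over. For the index $n-1$: if $A_mA_{n-1}=0$ then $\{m,n-1,n\}$ is long, hence $\{j,n-1,n\}$ is long for all $j\ge m$ (because $\{m,n-1,n\}\le\{j,n-1,n\}$ and long sets are upward closed for $\le$), so $A_jA_{n-1}=0$ for all $j\ge m$, and together with $A_{n-1}B_j=0$ this gives $xA_{n-1}=0$; whereas if $A_mA_{n-1}\ne 0$ the summand $a_mA_mA_{n-1}$ of $xA_{n-1}$ cannot be cancelled, so $A_{n-1}$ lies in ${\rm Ann}(x;\ell)$ exactly when it lies in ${\rm Ann}(A_m;\ell)$. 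After removing this contribution, writing $y=\sum c_jA_j+\sum d_jB_j\in{\rm Ann}(x;\ell)$ and reading off the coefficients of $A_mA_j$ and $A_mB_j$ in $xy$ forces $c_j=0$ for $j<m$ and yields a homogeneous system $My=0$ with $M$ of the same shape as in Lemma \ref{annix} (a column $(a_{m+1},\ldots,a_{n-2},b_{n-4},b_{n-3},b_{n-2})^{t}$ together with $-a_m$ along a subdiagonal); as there, $\ker M$ has rank $1$, spanned by $x$ itself, and adding back the $A_{n-1}$ direction gives the asserted equality of ranks.

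The main obstacle is the linear independence, in $H^2_{(1)}(\mathcal{M}_\ell;\Z)$, of the products $A_mA_j$ and $A_mB_j$ used above (and the similar check that the remaining coefficient equations in $xy=0$ impose no new constraint on $\ker M$); this is not visible from a bare list of relations, but follows from the actual ring structure of Proposition \ref{cohtypen-3n-1}, since in the relevant range the subring of $H^\ast_{(1)}(\mathcal{M}_\ell;\Z)$ generated by $A_1,\ldots,A_{n-2},B_{n-4},B_{n-3},B_{n-2}$ is $H^\ast(T^{n-5}\times M_3)$, in which these products are the distinct non-zero classes of the form (generator of $H^1(T^{n-5})$) times (generator of $H^1(M_3)$).
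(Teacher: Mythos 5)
Your proof is correct and follows exactly the route the paper intends: the paper gives no details for this lemma beyond saying it is ``proven analogously to Lemmata \ref{annihilator} and \ref{annix}'', and your argument is precisely that adaptation, reading everything off the presentation in Proposition \ref{cohtypen-3n-1}, pairing against the top class, and reusing the matrix/kernel argument of Lemma \ref{annix}. The only cosmetic point is that the vanishing $A_{n-1}A_K=0$ is best justified not by ``$K\cup\{n-1,n\}$ would be short'' (the ring is not a face ring, so non-vanishing of a monomial does not directly translate into shortness) but, as you also note, by the listed relations $A_{n-1}A_j=0$ for $j\in\{n-4,n-3,n-2\}$, which apply since $\{n-4,n-3,n-1\}\leq\{j,n-1,n\}$ forces the latter to be long.
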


\begin{proof}[Proof of Theorem \ref{cohrign-3n-1}]
 Let $\ell$, $\ell'$ be two ordered special length vectors of type $\{n-4,n-3,n-1\}$ with isomorphic cohomology rings. Then there is an isomorphism
\[
 \varphi:\Lambda_\Z[A_1,\ldots,A_{n-1},B_{n-4},\ldots,B_{n-2}]/I_\ell \to \Lambda_\Z[A'_1,\ldots,A'_{n-1},B'_{n-4},\ldots,B'_{n-2}]/I_{\ell'}
\]
with $I_\ell$, $I_{\ell'}$ as in Proposition \ref{cohtypen-3n-1}. As $\varphi$ sends ${\rm Ann}(\ell)$ to ${\rm Ann}(\ell')$ we get $\varphi(A_{n-1})=\pm A'_{n-1}$ by Lemma \ref{annitypen-3n-1}. Also, by the same argument as in the proof of Theorem \ref{cohrigin-1}, and using Lemma \ref{annitypen-3n-1}, we can assume that
\begin{eqnarray*}
 \varphi(\langle A_{n-4},\ldots,A_{n-2},B_{n-4},\ldots,B_{n-2}\rangle)&\subset & \langle A'_{n-4},\ldots,A'_{n-2},B'_{n-4},\ldots,B'_{n-2}\rangle.
\end{eqnarray*}
The same also holds for $\varphi^{-1}$, and we get an isomorphism
\[
 \psi:\Lambda_\Z[A_1,\ldots,A_{n-5},A_{n-1}]/\tilde{I}_\ell \to \Lambda_\Z[A'_1,\ldots,A'_{n-5},A'_{n-1}]/\tilde{I}_{\ell'}
\]
where $\tilde{I}_\ell$ is the ideal generated by $A_J$, where $J\subset\{1,\ldots,n-5,n-1\}$ satisfies $J\cup\{n\}$ is long with respect to $\ell$, and $\tilde{I}_{\ell'}$ is the analogous ideal for $\ell'$.

Denote by $\tilde{\mathcal{S}}^{(n-1)}(\ell)$ the full subcomplex of $\tilde{\mathcal{S}}(\ell)$ containing $\{i\}$ for $i\in\{1,\ldots,n-5,n-1\}$, that is, it consists of those subsets $J\in \dot{\mathcal{S}}(\ell)$ for which $J\cap \{1,\ldots,n-5,n-1\}\not=\emptyset$, but $J\cap \{n-4,n-3,n-2\}=\emptyset$. Observe that $\psi$ is an isomorphism between exterior face rings
\[
 \psi:\Lambda_\Z[\tilde{\mathcal{S}}^{(n-1)}(\ell)]\to \Lambda_\Z[\tilde{\mathcal{S}}^{(n-1)}(\ell')]
\]
so by Theorem \ref{isoprobsol} we get a permutation $\sigma:\{1,\ldots,n-5,n-1\}\to \{1,\ldots,n-5,n-1\}$ inducing a bijection $\tilde{\mathcal{S}}^{(n-1)}(\ell)\to\tilde{\mathcal{S}}^{(n-1)}(\ell')$. By \cite[Lm.3]{fahasc} we\footnote{Lemma 3 of \cite{fahasc} is not stated for $\tilde{\mathcal{S}}^{(n-1)}(\ell)$, but the proof works identically for this set as well.} get $\tilde{\mathcal{S}}^{(n-1)}(\ell)=\tilde{\mathcal{S}}^{(n-1)}(\ell')$. The set $\dot{\mathcal{S}}(\ell)-\tilde{\mathcal{S}}^{(n-1)}(\ell)$ is completely determined by $\ell$ being of type $\{n-4,n-3,n-1\}$, so $\dot{\mathcal{S}}(\ell)=\dot{\mathcal{S}}(\ell')$, which means that $\ell$ and $\ell'$ are in the same chamber.
\end{proof}

\begin{proof}[Proof of Walker's Conjecture]
 Let $\ell$ and $\ell'$ be generic length vectors with isomorphic graded cohomology rings $H^\ast(\mathcal{M}_\ell;\Z)\cong H^\ast(\mathcal{M}_{\ell'};\Z)$. By Theorem \ref{normalwalker} we get that they are in the same chamber up to permutation, if one of them is normal. The same is true if the polygon spaces are disconnected. Therefore we can assume that both $\ell$ and $\ell'$ are special. By Corollary \ref{sametype} they have the same type. By Remark \ref{alltypes}, Theorems \ref{rigidn-3}, \ref{cohrigin-1} and \ref{cohrign-3n-1}, and Lemma \ref{onelowtype} we get that $\ell$ and $\ell'$ are in the same chamber up to permutation.
\end{proof}

\begin{remark}
 Note that we have used Theorem \ref{isoprobsol} with $R=\Z$ in all cases except in Theorem \ref{rigidn-3}, where we used $R=\Q$. The case $R=\Z$ can be reduced to $R=\Z/2$, so one may ask if we can obtain cohomology with $\Z$-coefficients also for type $\{n-3,n-2,n-1\}$ and use Theorem \ref{isoprobsol} with $R=\Z/2$ throughout.

Let us indicate that this is indeed possible. For $n=4$ with $\{1,2,3\}$ long, it is easy to see that $\mathcal{M}_\ell\cong S^1$ and the homomorphism $i_\#:\pi_1(\mathcal{M}_\ell)\to \pi_1(T^4)$ is trivial. For $n\geq 5$, we can always keep the first $n-4$ bars fixed at 1 to produce an element $b\in \pi_1(\mathcal{M}_\ell)$ with $i_\# b=0\in \pi_1(T^{n-1})$. Using the techniques above for calculating the fundamental group, it is now easy to see that the fundamental group is a right-angled Artin group
\begin{eqnarray*}
 \pi_1(\mathcal{M}_\ell)&=&\left\langle a_1,\ldots,a_{n-1},b\,\left|\,
\begin{array}{rl}
a_k & \mbox{ for }\{k,n\}\mbox{ long,}\\
\mbox{ }[a_i,b]& \mbox{ for }i\leq n-4, \\
\mbox{ }[a_i,a_j] & \mbox{ for }\{i,j,n\}\mbox{ short }
\end{array}
\right. \right\rangle,
\end{eqnarray*}
and the $a_i$ are mapped to $x_i$ under $i_\#$, provided that $\{i,n\}$ is short. As before, we get for the cohomology
\begin{eqnarray*}
 H^\ast_{(1)}(\mathcal{M}_\ell;\Z)&\cong &\Lambda_\Z[A_1,\ldots,A_{n-1},B]/I_\ell
\end{eqnarray*}
where $I_\ell$ is the ideal generated by $A_J$ for $J\cup \{n\}$ long, and $A_jB$ for $j\geq n-3$. Notice that since $i_\# b=0\in \pi_1(T^{n-1})$, we get an exterior face ring. Theorem \ref{isoprobsol} can now be applied as in Theorem \ref{rigidn-3}, but with $\Z/2$-coefficients.
\end{remark}

\end{document}